\newcommand{\osc}{{\rm osc}}
\renewcommand{\Re}{\operatorname{Re}}
\newtheorem{theorem}{Theorem}[section]
\newtheorem{lemma}[theorem]{Lemma}
\newtheorem{proposition}[theorem]{Proposition}
\newtheorem{corollary}[theorem]{Corollary}
\declaretheorem[style=definition,qed=$\vartriangle$,sibling=theorem]{example}
\declaretheorem[style=remark,qed=$\vartriangle$,sibling=theorem]{remark}
\numberwithin{equation}{section}
\newcommand{\eps}{\varepsilon}
\newcommand{\R}{\mathbb R}
\newcommand{\N}{\mathbb N}
\newcommand{\cF}{\mathcal F}
\newcommand{\cE}{\mathcal E}
\newcommand{\cL}{\mathcal L}
\newcommand{\cS}{\mathcal S}
\newcommand{\cX}{\mathcal X}
\newcommand{\cY}{\mathcal Y}
\newcommand{\Lis}{\cL\mathrm{is}}
\newcommand{\cLis}{\cL\mathrm{is}_c}
\newcommand{\identity}{\mathrm{Id}}
\DeclareMathOperator{\ran}{ran}
\DeclareMathOperator{\diam}{diam}
\DeclareMathOperator*{\argmin}{argmin}
\DeclareMathOperator{\dist}{dist}
\DeclareMathOperator{\Span}{span}
\DeclareMathOperator{\curl}{curl}
\newcommand{\1}{\mathbb 1}
\newcommand*\bigcdot{\mathpalette\bigcdot@{.5}}
\newcommand*\bigcdot@[2]{\mathbin{\vcenter{\hbox{\scalebox{#2}{$\m@th#1\bullet$}}}}}
\DeclareMathOperator{\divv}{div}
\newcommand{\nrm}{| \! | \! |}
\newcommand{\be}{\begin{equation}}
\newcommand{\ee}{\end{equation}}
\newcommand{\bbT}{\mathbb{T}}
\newcommand{\tria}{{\mathcal T}}
\newcommand{\RT}{\mathit{RT}}
\par\begin{samepage}%
\newcounter{ccondition}
\renewcommand{\theequation}{\temp}%
\newcounter{mylistcounter}
\renewcommand{\themylistcounter}{(\roman{mylistcounter})}
\newenvironment{mylist}{
\begin{list}{\themylistcounter.}{\usecounter{mylistcounter}
\setlength{\labelwidth}{-\smallskipamount}
\setlength{\labelsep}{\medskipamount}
\setlength{\topsep}{\smallskipamount}
\setlength{\itemsep}{\smallskipamount}
\setlength{\itemindent}{0cm}
\setlength{\leftmargin}{0cm}}}
{\end{list}}
\title[Quasi-Optimal Least Squares for PDEs]{Quasi-Optimal Least Squares: Inhomogeneous boundary conditions, and application with machine learning}
\date{\today}
\author{Harald Monsuur, Robin Smeets, Rob Stevenson}
\address{Korteweg-de Vries (KdV) Institute for Mathematics, University of Amsterdam, P.O. Box 94248, 1090 GE Amsterdam, The Netherlands.}
\email{h.monsuur@uva.nl, robinsmeets99@gmail.com, rob.p.stevenson@gmail.com}
\thanks{This research has been supported by the NSF Grant DMS ID 1720297, and by the Netherlands Organization for Scientific Research (NWO) under contract.~no.~SH-208-11. 
We acknowledge the support of SURF (www.surf.nl) in using the National Supercomputer Snellius.}
\subjclass[2020]{
35B35, 
35B45, 
65N30
}
\keywords{Least squares methods, finite elements, neural networks, inhomogeneous boundary conditions, quasi-optimal approximation, a posteriori error estimator, adaptivity}
\begin{document}

\begin{abstract} We construct least squares formulations of PDEs with inhomogeneous essential boundary conditions, where boundary residuals are not measured in unpractical fractional Sobolev norms, but which formulations nevertheless are shown to yield a quasi-best approximations from the employed trial spaces. Dual norms do enter the least-squares functional, so that solving the least squares problem amounts to solving a saddle point or minimax problem. For finite element applications we construct uniformly stable finite element pairs, whereas for Machine Learning applications we employ adversarial networks.
\end{abstract}
\maketitle

\section{Introduction} \label{sec:1}
This paper is about Least Squares discretizations of boundary value problems (BVPs).  A comprehensive monograph on this topic is \cite{23.5}. In an abstract framework we consider variational formulations of BVPs in the form $Gu=f$, where for some Hilbert spaces $X$ and $Y$, $G$ is a linear operator $X \rightarrow Y'$ for which 
$\nrm \bigcdot\nrm_X:=\|G\bigcdot\|_{Y'}$ is a norm on $X$ that is equivalent to $\|\bigcdot\|_X$. In particular, $G$ is injective, but \emph{not} necessarily surjective.

Given a closed \emph{linear subspace} $X^\delta \subset X$, typically of finite element type, from an (infinite) family $\{X^\delta\}_{\delta \in \Delta}$ of such linear subspaces,
$$
u^\delta:=\argmin_{w \in X^\delta}\tfrac12\|f-Gw\|^2_{Y'}
$$
is the \emph{best} approximation to $u$ from $X^\delta$ w.r.t.~$\nrm\bigcdot\nrm_X$, and so a \emph{quasi-best} approximation w.r.t.~$\|\bigcdot\|_X$.

Not each BVP can be formulated in the above form with an evaluable norm \mbox{$\|\bigcdot\|_{Y'}$,} as when $Y' \simeq Y$ is an $L_2$-type space.
Such formulations are scarce in particular for the case of having essential inhomogeneous boundary conditions. The only exception we are aware of is \cite{20.187} for an inhomogeneous Robin boundary condition.
In general, $Y$ is a product of spaces, some with and others without evaluable dual norms.  Below we describe the approach to deal with the non-evaluable norms from our recent work \cite{204.19}.

\subsection{Approach from \cite{204.19}}
It suffices to consider the case that
$Y=Y_1 \times Y_2$, and so $G=(G_1,G_2)$ and $f=(f_1,f_2)$, where $\|\bigcdot\|_{Y_2'}$ can and $\|\bigcdot\|_{Y_1'}$ cannot be evaluated.
Then, for a sufficiently large closed \emph{linear subspace} $Y_1^\delta \subset Y_1$, typically of finite element type, in the Least Squares minimization over $X^\delta$ one replaces the norm $\|\bigcdot\|_{Y_1'}$ by the discretized dual-norm $\sup_{0 \neq v \in Y_1^\delta} \frac{|\bigcdot(v)|}{\|v\|_{Y_1}}$.
Assuming the pair $(X^\delta,Y_1^\delta)$ satisfies an (uniform) inf-sup or LBB condition, the resulting $u^\delta$ is still a \emph{quasi-best} approximation to $u$ from $X^\delta$.
This $u^\delta$ can be computed as the 2nd component of the pair $(\lambda^\delta,u^\delta) \in Y_1^\delta \times X^\delta$ that solves the saddle-point system
\begin{align*}
\langle \lambda^\delta,\mu\rangle_{Y_1}+(G_1 u^\delta)(\mu) &= f_1(\mu)\quad(\mu \in Y_1^\delta),\\
(G_1 w)(\lambda^\delta) -\langle G_2 u^\delta,G_2 w\rangle_{Y_2'}&=-\langle f_2,G_2w\rangle_{Y_2'} \quad (w \in X^\delta).
\end{align*}

A problem, however, arises when both $\|\bigcdot\|_{Y_1'}$ \emph{and} $\|\bigcdot\|_{Y_1}$ cannot be evaluated, as when $Y_1$ is a fractional Sobolev space. Such spaces naturally arise with the imposition of inhomogeneous essential boundary conditions. 
For $K^\delta_{Y_1}\colon {Y_1^\delta}' \rightarrow Y_1^\delta$ being such that $\sqrt{((K^\delta_{Y_1})^{-1} \bigcdot)(\bigcdot)}$ is uniformly equivalent to $\|\bigcdot|_{Y_1^\delta}\|_{Y_1}$, a solution is given by \emph{replacing} $\langle \bigcdot,\bigcdot\rangle_{Y_1}$ in the above saddle-point system by  $((K^\delta_{Y_1})^{-1}\bigcdot)(\bigcdot)$.
The resulting solution $u^\delta$ is then still \emph{quasi-best}, and by eliminating $\lambda^\delta$, it can be computed as the solution of the symmetric positive definite system 
$$
(G_1 u^\delta-f_1)(K_{Y_1}^\delta G_1 w)+\langle G_2 u^\delta-f_2,G_2 w\rangle_{Y_2'}=0 \quad(w \in X^\delta).
$$
We will refer to $K_{Y_1}^\delta$ as a \emph{preconditioner} for $Y_1^\delta \rightarrow (Y_1^\delta)'\colon \lambda^\delta \mapsto \langle \lambda^\delta,\bigcdot\rangle_{Y_1}$.
Such preconditioners, whose application, moreover, can be performed in linear time, are available for fractional Sobolev spaces of positive and negative order. It is, however, fair to say that their implementation is demanding. A related first work in this direction was \cite{249.06}.

\subsection{Current work}
Here we propose a different approach to deal with Sobolev spaces on the boundary with smoothness indices $\pm \frac12$.
It is based on the observation that for a domain $\Omega \subset \R^d$, and $\gamma^{\vec{n}}\colon H(\divv;\Omega) \rightarrow H^{-\frac12}(\partial\Omega)$ being the normal trace operator,
\begin{align} \label{eq:100}
&\|\bigcdot\|_{H^{\frac12}(\partial\Omega)} \eqsim \sup_{0 \neq \vec{v}\in H(\divv;\Omega)} \frac{|\int_{\partial\Omega} \bigcdot\, \gamma^{\vec{n}}\vec{v}  \,d s|}{\|\vec{v}\|_{H(\divv;\Omega)}}= \|{\gamma^{\vec{n}}}' \bigcdot\|_{H(\divv;\Omega)'}, 
\intertext{and analogously } \label{eq:101}
&\|\bigcdot\|_{H^{-\frac12}(\partial\Omega)} \eqsim  \|\gamma' \bigcdot\|_{H^1(\Omega)'},
\end{align}
 with $\gamma\colon H^1(\Omega) \rightarrow H^{\frac12}(\partial\Omega)$ denoting the standard trace operator.

\begin{remark}
With $H_\triangle(\Omega):=\{v \in H^1(\Omega)/\R\colon \triangle v \in L_2(\Omega)\}$ equipped with the graph norm, an alternative for \eqref{eq:100} is
\be \label{eq:102}
\|\bigcdot\|_{H^{\frac12}(\partial\Omega)}\eqsim\sup_{0 \neq \vec{v}\in H_\triangle(\Omega)} \frac{|\int_{\partial\Omega} \bigcdot \,\, \partial_{\vec{n}} v \,d s|}{\|v\|_{H_\triangle(\Omega)}}.
\ee
Although not attractive for finite element computations, the use of the smaller scalar space $H_\triangle(\Omega)$ instead of the vectorial $H(\divv;\Omega)$ shows to be beneficial in machine learning applications.
\end{remark}

By applying these alternative expressions \eqref{eq:100} (or \eqref{eq:102}) and \eqref{eq:101} for the Sobolev norms with smoothness indices $\frac12$ and $-\frac12$, for both 2nd order elliptic equations and the stationary Stokes equations on a domain $\Omega$, we obtain variational formulations of the form $G u=f$, with $\nrm\bigcdot\nrm_X:=\|G \bigcdot\|_{Y'}$ equivalent to $\|\bigcdot\|_X$, where both $X$ and $Y$ are products of \emph{merely functions spaces on $\Omega$}, which are either Sobolev spaces with smoothness indices in $\N_0$, or are equal to $H(\divv;\Omega)$. In the case of mixed boundary conditions some of these spaces are restricted by the incorporation of homogeneous boundary conditions on part of the boundary.
For either mixed or non-mixed boundary conditions, the operator $G$ will not be surjective, which for least squares discretizations does not hurt assuming $f \in \ran G$.

For finite element spaces $X^\delta \subset X$, and those factors $Y_i$ of the product space $Y$ that are not $L_2$-spaces, we will construct finite element spaces $Y_i^\delta \subset Y_i$ such that $(X^\delta,Y_i^\delta)$ satisfies the required LBB stability, so that $\|\bigcdot\|_{Y_i'}$ can be replaced by $\sup_{0 \neq v \in Y_i^\delta} \frac{|\bigcdot(v)|}{\|v\|_{Y_i}}$ whilst maintaining \emph{quasi-optimality} of the obtained Least Squares approximation $u^\delta \in X^\delta$. 

Compared to our earlier work, the advantage of this approach is that it does not require the application of preconditioners for fractional Sobolev norms on the boundary.

\subsection{Application with Machine Learning}
The approximation of the solution of a BVP using Neural Nets requires its formulation as a minimization problem.
For \emph{symmetric positive definite problems} a possibility is to use the \emph{Energy functional}, whereas a \emph{general applicable} approach is to use a \emph{Least Squares functional}. The imposition of essential boundary conditions, in particular inhomogeneous ones causes problems.

Let us illustrate this by considering the simple model problem
$$ \left\{
\begin{array}{r@{}c@{}ll}
-\triangle u&\,\,=\,\,& g &\text{ on } \Omega,\\
u &\,\,=\,\,& h &\text{ on } \partial\Omega,
\end{array}
\right.
$$
where with Least Squares methods non-symmetric first order terms can be added.
The solution $u$ of the above model problem is the minimizer over $\{w \in H^1(\Omega)\colon w=h \text{ on } \partial\Omega\}$ of the \emph{Energy functional}
$w \mapsto \frac12\int_\Omega |\nabla w|^2\,dx-g(w)$.

When $h=0$, one can approximate $u$ by the minimizer\footnote{For the purpose of the discussion, here we will assume that minima exist, and, moreover, that they can be computed.} $\widetilde{u}$ of this functional over
$\{\phi w\colon w \in \cX\}$, where $\cX$ is a \emph{set} of Neural Net functions, and $\phi$ is a, preferably smooth function, with $\phi(\bigcdot) \eqsim \dist(\bigcdot,\Omega)$. 
This $\widetilde{u}$ is the best approximation to $u$ from $\{\phi w\colon w \in \cX\}$ w.r.t.~$|\bigcdot|_{H^1(\Omega)}$.
For general domains $\Omega$, the construction of $\phi$ is, however, not obvious.
See \cite{255.5} for an extensive discussion.
Moreover, for non-smooth $u \in H^1_0(\Omega)$ the best approximation error from $\{\phi w\colon w \in \cX\}$ can be significantly larger than that from $\cX$ (cf. \cite[Thm.~7.6]{18.673}).

For $h \neq 0$, one can minimize the Energy functional over $\{\widetilde{h}+\phi w\colon w \in \cX\}$, where $\widetilde{h} \in H^1(\Omega)$ is an (approximate) extension of $h$ computed by transfinite interpolation (see \cite[Sect.~5.2]{255.5}), or by using a second neural net (\cite{248.55}). The appropriate norm for controlling $h-\widetilde{h}|_{\partial\Omega}$ is $\|\bigcdot\|_{H^{\frac12}(\partial\Omega)}$, which is, however, difficult to implement and therefore never used.

Instead of incorporating the essential boundary condition in the trial space, following \cite{70.25}, one may approximately enforce it by minimizing over $\cX$ the modified Energy functional $w \mapsto \frac12\int_\Omega |\nabla w|^2\,dx-g(w)+\alpha \|w-h\|_{L_2(\partial\Omega)}^2$, where $\alpha$ is some empirically chosen constant. The method is known as the \emph{Deep Ritz method}. Because of the use of the practically feasible $L_2(\partial\Omega)$-norm instead of the mathematically correct $H^{\frac12}(\partial\Omega)$-norm, generally it will not produce a quasi-best approximation to the solution from $\cX$.

Following \cite{239}, another way of imposing the essential boundary condition is by adding a penalty term to the Energy functional. Then for some $\alpha>0$ one minimizes
$$
w \mapsto \big[\int_\Omega  \tfrac12 |\nabla w|^2\,dx+\int_{\partial\Omega}  \tfrac12 \alpha w^2- w \partial_{\vec{n}} w
\,ds\big]-
\big[g(w) + \int_{\partial\Omega} \alpha  h w- h \partial_{\vec{n}} w\,ds\big]
$$
over $\cX$. This penalty method is called the \emph{Deep Nitsche method} (\cite{169.0556}).
Based on direct and inverse inequalities, for a finite element trial space an appropriate local choice for $\alpha$ is a sufficiently large constant times the reciprocal of the local mesh-size. For Neural Nets some empirically found global constant $\alpha$ is applied. In that case, however, it cannot be expected that  an appropriate, solution-independent choice for this penalty parameter exists.
\medskip

Existing \emph{Least Squares} based Neural Net approximations include minimization over $\cX$ of $w \mapsto \tfrac12 \|\triangle w +g\|_{L_2(\Omega)}^2+\alpha\|w-h\|_{L_2(\partial\Omega)}^2$ as with the \emph{Physics Informed Neural Network} (PINN) method \cite{243.65}.

The weak solution of Poisson's problem minimizes $\frac12 \|v \mapsto \int_\Omega \nabla w \cdot\nabla v\,dx-g(v)\|_{H^{-1}(\Omega)}$ over $\{w \in H^1(\Omega)\colon w|_{\partial\Omega}=h\}$. Invoking a second set of Neural Net functions $\cY$, this leads to the \emph{Weak Adversarial Network} (WAN) method \cite{19.28} of computing
$$
\argmin_{w \in \cX} \sup_{\{\phi v\colon v \in \cY\}} \tfrac12\frac{|\int_\Omega \nabla w \cdot\nabla (\phi v)\,dx -g(\phi v)|^2}{|\phi v|_{H^1(\Omega)}^2}+ \alpha\|w-h\|_{L_2(\partial\Omega)}^2.
$$

Because of the use of the $L_2(\partial\Omega)$-norm in which the boundary residual is measured, both PINN and WAN are not guaranteed to yield quasi-best approximations from the employed set $\cX$.

In \cite{35.9303} a first order system formulation is employed. Setting $\vec{p}=\nabla u$, the pair $(\vec{p},u)$ is the minimizer over $L_2(\Omega)^d \times \{w \in H^1(\Omega)\colon w=h \text{ on } \partial\Omega\}$ of $(\vec{q},w)\mapsto \tfrac12\|\vec{q}-\nabla w\|_{L_2(\Omega)}^2 + \tfrac12\|\divv \vec{q}+g\|_{L_2(\Omega)}^2$. For a set of Neural Net functions $\cX \subset L_2(\Omega)^d  \times H^1(\Omega)$, one approach would be to minimize this functional over $\{(\vec{q},\widetilde{h}+\phi w)\colon (\vec{q},w) \in \cX\}$ analogously to the method from \cite{248.55} discussed above, and with similar difficulties. Alternatively, one can minimize
$$
(\vec{q},w)\mapsto \tfrac12\|\vec{q}-\nabla w\|_{L_2(\Omega)}^2 + \tfrac12\|\divv \vec{q}+g\|_{L_2(\Omega)}^2+\tfrac12\|w-h\|_{H^{\frac12}(\partial\Omega)}^2
$$
over $\cX$, which would give a quasi-best approximation from $\cX$ to $(\vec{p},u)$ in the norm on $L_2(\Omega)^d  \times H^1(\Omega)$.
As noted in \cite{248.55}, however, for $d>1$ the computation of the fractional norm is unfeasible, and the numerical experiments are restricted to the one-dimensional case. In \cite{188}, in a slightly different context, it is proposed to approximately compute the Sobolev-Slobodeckij fractional norm,  which because of the singular kernel is difficult and in any case expensive.

A recent work on the use in Machine Learning of well-posed first order system formulations for homogeneous boundary conditions is \cite{241.4}.
\medskip

The \emph{Quasi-Optimal Least Squares} (QOLS) method that is introduced in this work solves the problem to correctly impose essential (inhomogeneous) boundary conditions. 
It measures the residuals of the PDE and the boundary conditions in correct norms, so that residual minimization is equivalent to error minimization, whereas it avoids the use of the non-evaluable fractional Sobolev norms. Notice that the approach of replacing such norms on test spaces by equivalent ones defined in terms of preconditioners is restricted to \emph{linear} test spaces, and so does not apply in the Machine Learning setting.

For the model problem in a first order formulation, given \emph{sets} of Neural Net functions $\cX \subset H(\divv;\Omega) \times H^1(\Omega)$ and $\cY \subset H_\triangle(\Omega)$, the QOLS method computes
\begin{align*}
\argmin_{(\vec{q},w) \in \cX } \Big[
\tfrac12 \|\vec{q} -\nabla w\|_{L_2(\Omega)^d}^2+\tfrac12\|\divv \vec{q}+&g\|_{L_2(\Omega)}^2+
\\
&\sup_{v \in \cY} \{\int_{\partial\Omega} (w-h)\partial_{\vec{n}}v\,ds-\tfrac12\|\vec{v}\|_{H_\triangle(\Omega)}^2\}\Big].
\end{align*}
The computed minimum will be shown to be a \emph{quasi-best} approximation to $(\vec{p},u)$ from $\cX$ w.r.t.~the norm on $H(\divv;\Omega) \times H^1(\Omega)$ under the condition that $\cY$ is sufficiently large in relation to $\cX$, akin to the LBB condition for linear trial- and test-spaces.

The QOLS method  can also be applied to the second order weak formulation. In that case, for $\cX \subset X:= H^1(\Omega)$, and
$\cY \subset Y:=H^1(\Omega) \times H_\triangle(\Omega)$, it computes
\begin{align*}
\argmin_{w\in \cX} \sup_{(v_1,v_2) \in \cY}\Big\{
\int_\Omega \nabla w \cdot\nabla (\phi v_1)\,dx - g(\phi v_1)+&\int_{\partial\Omega} (w-h)\partial_{\vec{n}}v_2\,ds\\
&- \tfrac12 \|\phi v_1\|_{H^1(\Omega)}^2-\tfrac12\|v_2\|_{H_\triangle(\Omega)}^2\Big\}.
\end{align*}
Again, for $\cY$ sufficiently large, only dependent on $\cX$, the computed minimum is a \emph{quasi-best} approximation to $u$ from $\cX$ in the norm on $H^1(\Omega)$. 
For larger $d$, this second order formulation has the advantage that no spaces of $d$-dimensional vector fields enter. On the other hand, one needs to construct 
a function $\phi$ with $\phi(\bigcdot) \eqsim \dist(\bigcdot,\Omega)$ to obtain test functions $\phi v_1 \in H_0^1(\Omega)$. Notice that a reduced approximation property as a consequence of the multiplication with $\phi$ is irrelevant at this test side.

\subsection{Organization} In Sect.~\ref{sec:2}, in an abstract setting we discuss least squares principles for the numerical approximation from \emph{linear subspaces} of the solution of a well-posed operator equation. In particular, we discuss the treatment of dual norms in a least squares functional. 

In Sect.~\ref{sec:3} we present well-posed formulations of the model elliptic second order boundary value problem on a domain $\Omega \subset \R^d$ with inhomogeneous boundary conditions.

The operators corresponding to the formulations from Sect.~\ref{sec:3} map into a product of Hilbert spaces, some of which being fractional Sobolev spaces on the boundary of $\Omega$.
In Sect.~\ref{sec:4}, well-posed modified formulations are constructed where all arising Hilbert spaces are integer order Sobolev spaces on $\Omega$, or duals of those.
Because of the presence of dual spaces, least-squares discretizations lead to saddle-point problems. The necessary uniform inf-sup stability conditions are verified for pairs of finite element spaces.

The least squares approach is not restricted to the model second order problem, and in Sect.~\ref{sec:5} + Appendix~\ref{sec:app} we consider the application to the stationary Stokes equations.

In Sect.~\ref{sec:6} we consider the discretisation of least squares formulations by \emph{(deep) neural nets}. Viewing a saddle-point problem as a minimax problem we employ adversarial networks. We derive a sufficient condition on the adversarial `dual' network for obtaining a quasi-best approximation from the `primal' network.

In Sect.~\ref{sec:7} we present numerical results for finite element and neural network discretizations, the latter in comparison with familiar neural net discretizations of PDEs.
Conclusions are formulated in Sect.~\ref{sec:conclusion}.

\subsection{Notations}
With the notation $C \lesssim D$, we mean that $C$ can be bounded by a multiple of $D$, independently of parameters which $C$ and $D$ may depend on, as the discretization index $\delta$.
Obviously, $C \gtrsim D$ is defined as $D \lesssim C$, and $C\eqsim D$ as $C\lesssim D$ and $C \gtrsim D$.

For Hilbert spaces $H$ and $K$, $\cL(H,K)$ will denote the space of bounded linear mappings $H \rightarrow K$ endowed with the operator norm $\|\bigcdot\|_{\cL(H,K)}$. The subset of invertible operators in $\cL(H,K)$, thus with inverses in $\cL(K,H)$,
 will be denoted as $\Lis(H,K)$. The subset of operators $R \in \Lis(H',H)$ for which $(R^{-1} \bigcdot)(\bigcdot)$ defines a scalar product on $H\times H$ will be denoted by $\cLis(H',H)$.

\section{Least squares principles in an abstract setting}  \label{sec:2}
\subsection{Continuous problem}
For some Hilbert spaces $X$ and $Y$, let $G\in \cL(X,Y')$ be a homeomorphism with its range, i.e.
\be \label{eq:1}
\|w\|_X \eqsim \|G w\|_{Y'} \quad (w \in X),
\ee
or, equivalently,
$$
M:=\sup_{0 \neq w \in X} \sup_{0 \neq v \in Y} \frac{|(G w)(v)|}{\|w\|_X \|v\|_{Y}} <\infty,\quad 
\alpha:=\inf_{0 \neq w \in X} \sup_{0 \neq v \in Y} \frac{|(G w)(v)|}{\|w\|_X \|v\|_{Y}} >0.
$$
Notice that $G$ is injective, but \emph{not} necessarily surjective.
Equipping $X$ with norm $\nrm \bigcdot\nrm_X:=\|G \bigcdot\|_{Y'}$, we have $\alpha \|\bigcdot\|_{X} \leq \nrm \bigcdot\nrm_X \leq M\|\bigcdot\|_{X}$.

Given $f \in Y'$, consider the Least Squares problem of finding
\be \label{eq:2}
u:=\argmin_{w \in X} \tfrac12\|Gw-f\|^2_{Y'}.
\ee
Necessarily, this $u$ solves the corresponding Euler-Lagrange equations
\be \label{eq:3}
\langle Gu,Gw\rangle_{Y'}=\langle f ,G w\rangle_{Y'}\quad (w \in X),
\ee
meaning that $G u$ is the $Y'$-orthogonal projection of $f$ onto $\ran G$, and therefore $\nrm u\nrm_X \leq \|f\|_{Y'}$.
Whenever $Gu=f$ has a solution, i.e., $f \in \ran G$, it is the unique solution of \eqref{eq:2}, and $\nrm u\nrm_X = \|f\|_{Y'}$.

\subsection{Discretization} \label{sec:discretization}
For $\delta$ from some (infinite) index set $\Delta$, let $\{0\} \subsetneq X^\delta \subsetneq X$ be a closed, e.g.~finite dimensional \emph{linear subspace}, typically of finite element type. Then 
\be \label{eq:ls}
u^\delta:=\argmin_{w \in X^\delta} \tfrac12\|Gw-f\|^2_{Y'}
\ee
is the \emph{best} approximation to $u$ from $X^\delta$ w.r.t.~$\nrm\bigcdot\nrm_X$.
This $u^\delta$ is computable when $\|\bigcdot\|_{Y'}$ (and $G$) is evaluable, as when $Y' \simeq Y$ is an $L_2$-space.

Variational formulations of PDEs in the operator form $G u=f$, where \eqref{eq:1} holds \emph{and} $\|\bigcdot\|_{Y'}$ is evaluable are relatively scarce, and in any case they are not available in the case of inhomogeneous essential boundary conditions. 

In general, $Y'$ is a product of Hilbert spaces some of them with, and others without evaluable norms. To analyze this situation, it suffices to consider the case that $Y'=Y'_1 \times Y'_2$,\footnote{The single-factor case gives no additional difficulties.}  and so $f=(f_1,f_2)$, and $G=(G_1,G_2)$, where
$$
\|\bigcdot\|_{Y_2'} \text{ is evaluable, and } \|\bigcdot\|_{Y_1'} \text{ is not}.
$$

Given a closed linear subspace $Y^\delta_1=Y^\delta_1(X^\delta) \subset Y_1$ that is sufficiently large such that
\be \label{eq:defgamma}
\gamma^\delta := \inf_{\{w \in X^\delta\colon G_1 w \neq 0\}}\frac{\sup_{0 \neq v_1 \in Y^\delta_1} \frac{|(G_1 w)(v_1)|}{\|v_1\|_{Y_1}}}{\|G_1 w\|_{Y_1'}} >0,
\ee
we \emph{replace} the non-computable Least Squares approximation \eqref{eq:ls} by%
\footnote{\label{foottie} Often we use that for $g_i \in Y_i'$,
$\sup_{0 \neq v \in Y} \frac{|g_1(v_1)+g_2(v_2)|^2}{\|v_1\|_{Y_1}^2+\|v_2\|_{Y_2}^2}=
\sup_{0 \neq v_1 \in Y_1} \frac{|g_1(v_1)|^2}{\|v_1\|_{Y_1}^2}\!+\!
\sup_{0 \neq v_2 \in Y_2} \frac{|g_2(v_2)|^2}{\|v_2\|_{Y_2}^2}$. For use later, we note that the equality holds also true for \emph{sets} $Y_1$ and $Y_2$ that are closed under scalar mutiplication.
Consequently, if $Y_1=Y_{11}\times\cdots\times Y_{1k}$, $f_1=(f_{11},\ldots,f_{1k})$, and 
$G_1=(G_{11},\ldots,G_{1k})$, for inf-sup constants $\gamma^\delta_{11},\ldots,\gamma^\delta_{1k}$ defined similarly as $\gamma^\delta$ in \eqref{eq:defgamma}, it holds that
$\gamma^\delta \geq \min(\gamma^\delta_{11},\ldots,\gamma^\delta_{1k})$.}
\be \label{eq:minresprac}
u^\delta:=\argmin_{w \in X^\delta} \tfrac12 \big(\sup_{0 \neq v \in Y_1^\delta} \frac{|(G_1 w-f_1)(v)|^2}{\|v\|_{Y_1}^2}+
\|G_2 w-f_2\|_{Y'_2}^2\big),
\ee
which, as we will see, \emph{is} computable. For the latter, we will assume that $\|\bigcdot|_{Y_1^\delta}\|_{Y_1}$, or a uniformly equivalent norm, is evaluable.

As is well-known, an inf-sup condition like \eqref{eq:defgamma} relates to existence of a `Fortin interpolator' $\Pi^\delta \in \cL(Y_1,Y_1^\delta)$. The following formulation of this relation does not require injectivity of $G_1$ which is not guaranteed. It is used that $G_1 \in \cL(X,Y_1')$, being a consequence of $G \in \cL(X,Y')$.

\begin{theorem}[{\cite[Prop.~5.1]{249.992}}] \label{thm:Fortin} Assuming that $G_1 X^\delta \neq \{0\}$ and $Y_1^\delta \neq \{0\}$, let
\be \label{fortin}
\Pi^\delta \in \cL(Y_1,Y_1^\delta) \text{ with } (G_1 X^\delta)\big((\identity - \Pi^\delta)Y_1\big)=0.
\ee
Then $\gamma^\delta \geq \|\Pi^\delta\|_{\cL(Y_1,Y_1)}^{-1}$.

Conversely, if $\gamma^\delta >0$, then there exists a $\Pi^\delta$ as in \eqref{fortin}, being even a projector onto $Y_1^\delta$, with
$\|\Pi^\delta\|^{-1}_{\cL(Y_1,Y_1)} = \gamma^\delta$.
\end{theorem}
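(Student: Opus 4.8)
The plan is to prove the two directions separately, treating the inf-sup constant $\gamma^\delta$ as a quotient of a discretized dual norm by the full dual norm and exploiting the duality between Fortin operators and inf-sup stability.

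\medskip

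\textbf{Forward direction.} Suppose a Fortin operator $\Pi^\delta \in \cL(Y_1,Y_1^\delta)$ as in \eqref{fortin} is given, and set $C := \|\Pi^\delta\|_{\cL(Y_1,Y_1)}$; we may assume $C>0$ since otherwise $\Pi^\delta=0$ forces $(G_1X^\delta)(Y_1)=0$, contradicting $G_1X^\delta\neq\{0\}$. Fix $w\in X^\delta$ with $G_1 w \neq 0$. For any $v\in Y_1$ with $\|v\|_{Y_1}\leq 1$, the identity $(G_1w)(v)=(G_1 w)(\Pi^\delta v)$, valid because $(G_1w)((\identity-\Pi^\delta)v)=0$ by \eqref{fortin}, together with $\|\Pi^\delta v\|_{Y_1}\leq C$, gives
\[
|(G_1 w)(v)| = |(G_1 w)(\Pi^\delta v)| \leq C \sup_{0\neq v_1\in Y_1^\delta}\frac{|(G_1 w)(v_1)|}{\|v_1\|_{Y_1}}.
\]
Taking the supremum over the unit ball of $Y_1$ on the left yields $\|G_1 w\|_{Y_1'} \leq C \sup_{0\neq v_1\in Y_1^\delta}\frac{|(G_1w)(v_1)|}{\|v_1\|_{Y_1}}$, and dividing by $\|G_1 w\|_{Y_1'}>0$ and taking the infimum over admissible $w$ gives $\gamma^\delta \geq C^{-1}=\|\Pi^\delta\|_{\cL(Y_1,Y_1)}^{-1}$.

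\medskip

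\textbf{Converse direction.} Assume $\gamma^\delta>0$. The idea is to build $\Pi^\delta$ as an orthogonal-type projector onto $Y_1^\delta$ along the annihilator of $G_1 X^\delta$. Consider the finite(-or-closed-)dimensional space $Z := G_1 X^\delta \subseteq Y_1'$ and its pre-annihilator $Z_\circ := \{v\in Y_1 : z(v)=0 \ \forall z\in Z\}$, which is closed in $Y_1$. The inf-sup condition \eqref{eq:defgamma} says precisely that the restriction-to-$Y_1^\delta$ map $Z \to (Y_1^\delta)'$ is bounded below by $\gamma^\delta$ with respect to the norm $z\mapsto\|z\|_{Y_1'}$; equivalently, $Y_1^\delta \cap Z_\circ = \{v\in Y_1^\delta : z(v)=0 \ \forall z \in Z\}$ intersects $Y_1^\delta$ trivially enough that $Y_1 = Y_1^\delta \oplus (Y_1^\delta\cap Z_\circ)^{\perp_{Y_1}}$-type decompositions can be arranged; more directly, one checks that $Y_1^\delta$ and $Z_\circ$ together span $Y_1$ and that the angle between suitable complements is controlled by $\gamma^\delta$. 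Define $\Pi^\delta$ to be the projector onto $Y_1^\delta$ whose kernel contains $Z_\circ$, constructed via the Riesz representation: for $v\in Y_1$, let $\Pi^\delta v$ be the unique element of $Y_1^\delta$ with $z(\Pi^\delta v) = z(v)$ for all $z\in Z$ (existence and uniqueness of $\Pi^\delta v$ follow from $\gamma^\delta>0$, which makes the relevant restricted bilinear form on $Z \times Y_1^\delta$ nondegenerate in the right variable). Then $(\identity-\Pi^\delta)v \in Z_\circ$ for all $v$, so \eqref{fortin} holds, $\Pi^\delta|_{Y_1^\delta}=\identity$, hence $\Pi^\delta$ is a projector onto $Y_1^\delta$. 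Finally, the sharp bound $\|\Pi^\delta\|_{\cL(Y_1,Y_1)}=1/\gamma^\delta$ comes from a dual computation: for $v\in Y_1$,
\[
\|\Pi^\delta v\|_{Y_1} = \sup_{0\neq z\in Z}\frac{|z(\Pi^\delta v)|}{\sup_{0\neq v_1\in Y_1^\delta}\frac{|z(v_1)|}{\|v_1\|_{Y_1}}\cdot\text{(something)}}\ \cdots
\]
more carefully, one uses $\sup_{0\neq v_1\in Y_1^\delta}\frac{|z(v_1)|}{\|v_1\|_{Y_1}} \geq \gamma^\delta\|z\|_{Y_1'}$ together with $z(\Pi^\delta v)=z(v)$ and $|z(v)|\leq\|z\|_{Y_1'}\|v\|_{Y_1}$ to get $\|\Pi^\delta v\|_{Y_1}\leq (\gamma^\delta)^{-1}\|v\|_{Y_1}$, while optimizing the construction (choosing $\Pi^\delta v$ orthogonal to $Y_1^\delta\cap Z_\circ$) attains equality; this last optimality claim is where I expect to cite or reproduce \cite[Prop.~5.1]{249.992} verbatim.

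\medskip

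\textbf{Main obstacle.} The delicate point is the converse: producing a Fortin operator that is genuinely a \emph{projector} onto $Y_1^\delta$ \emph{and} has operator norm exactly $(\gamma^\delta)^{-1}$, rather than merely $\lesssim (\gamma^\delta)^{-1}$. The non-injectivity of $G_1$ (so that $Z=G_1 X^\delta$ may be a proper quotient of $X^\delta$) means one must phrase everything in terms of $Z$ and its annihilator rather than $X^\delta$ itself — this is exactly the refinement highlighted in the theorem statement. Getting the constant sharp requires choosing the complement of $Y_1^\delta\cap Z_\circ$ inside $Y_1^\delta$ to be the $Y_1$-orthogonal one and then doing a careful two-sided extremal argument (an application of the closed-range/minimal-norm-lifting theorem), which is the technical heart and for which invoking the cited proposition is the cleanest route.
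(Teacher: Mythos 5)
The paper does not prove this theorem; it cites \cite[Prop.~5.1]{249.992}, so there is no internal argument to compare against. On its own terms, your forward direction is correct and complete: inserting $\Pi^\delta$ into $(G_1 w)(\cdot)$ via the Fortin identity and bounding $\|\Pi^\delta v\|_{Y_1} \le \|\Pi^\delta\|_{\cL(Y_1,Y_1)}\|v\|_{Y_1}$ transfers the full dual norm to the discretized one, giving $\gamma^\delta \ge \|\Pi^\delta\|_{\cL(Y_1,Y_1)}^{-1}$.

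The converse direction as you have written it, however, has a genuine gap. Set $Z := G_1 X^\delta \subset Y_1'$ and $Z_\circ := \{v \in Y_1 : z(v)=0 \ \forall z \in Z\}$. The condition $\gamma^\delta>0$ in \eqref{eq:defgamma} makes the pairing $(z,v_1) \mapsto z(v_1)$ on $Z \times Y_1^\delta$ nondegenerate in the \emph{first} variable, not the second: nothing forbids $Y_1^\delta \cap Z_\circ \ne \{0\}$, and in the applications of this paper $Y_1^\delta$ is deliberately allowed to be larger than strictly necessary, so this intersection is typically nontrivial. Your defining requirement ``let $\Pi^\delta v$ be the \emph{unique} element of $Y_1^\delta$ with $z(\Pi^\delta v)=z(v)$ for all $z\in Z$'' is therefore not a definition, and your earlier ``projector onto $Y_1^\delta$ along $Z_\circ$'' cannot exist unless $Y_1^\delta \cap Z_\circ = \{0\}$. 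The repair you gesture at — take $\Pi^\delta v$ in $W := \big(Y_1^\delta \cap Z_\circ\big)^{\perp_{Y_1}} \cap Y_1^\delta$ — indeed yields a well-defined bounded linear map $P$ with $\|P\|_{\cL(Y_1,Y_1)} = 1/\gamma^\delta$ (via the adjoint identity $P' = (T|_Z)^{-1}$ with $T$ the restriction map $Y_1' \to W'$), but $P$ is then a projector onto $W$, not onto $Y_1^\delta$, because it annihilates $Y_1^\delta\cap Z_\circ$. To get a projector onto $Y_1^\delta$ one must take $\Pi^\delta := P + Q$ with $Q$ the $Y_1$-orthogonal projection onto $Y_1^\delta \cap Z_\circ$; the Fortin identity still holds since $\ran Q \subset Z_\circ$, and $\Pi^\delta|_{Y_1^\delta} = \identity$. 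But then the naive bounds $\|P\|\le 1/\gamma^\delta$, $\|Q\|\le 1$ give only $\|\Pi^\delta\| \le \sqrt{(\gamma^\delta)^{-2}+1}$, which is not sharp. The missing step is to observe that $Pv$ depends only on $v$ modulo $Z_\circ$, so that $\|Pv\|_{Y_1} \le (\gamma^\delta)^{-1}\|(\identity-P_{Z_\circ})v\|_{Y_1}$ (with $P_{Z_\circ}$ the orthogonal projection onto $Z_\circ$), while $\|Qv\|_{Y_1}\le \|P_{Z_\circ}v\|_{Y_1}$; combining these with the orthogonality $\|\Pi^\delta v\|_{Y_1}^2=\|Pv\|_{Y_1}^2+\|Qv\|_{Y_1}^2$ and $\gamma^\delta\le 1$ gives $\|\Pi^\delta\|_{\cL(Y_1,Y_1)}\le 1/\gamma^\delta$, with equality by the forward direction. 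This is precisely the argument you would need to spell out; as it stands your sketch both asserts a false nondegeneracy and, once corrected, loses the projector property it is trying to establish.
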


For datum in the range of the operator $G$, next we show that if $\inf_{\delta \in \Delta} \gamma^\delta>0$, then $u^\delta$ is a \emph{quasi-best} approximation to $u$.

\begin{theorem} \label{thm:quasi-opt} If $f \in \ran G$, then the solution $u^\delta \in X^\delta$ of \eqref{eq:minresprac} satisfies
$$
\nrm u-u^\delta \nrm_{X} \leq \tfrac{1}{\gamma^\delta} \inf_{w \in X^\delta} \nrm u-w\nrm_{X}.
$$
\end{theorem}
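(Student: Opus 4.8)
The plan is to recognise the practical least-squares problem \eqref{eq:minresprac} as residual minimisation over a single combined test space, to rewrite it as a Petrov--Galerkin scheme with the associated \emph{optimal} test space, and then to read off quasi-optimality with the sharp constant from the identity $\|\identity-P\|=\|P\|$ for bounded idempotents $P$. First put $Y^\delta:=Y_1^\delta\times Y_2\subset Y$, and for $g=(g_1,g_2)\in Y'$ write $g|_{Y^\delta}\in(Y^\delta)'$ for its restriction. By the product identity in footnote~\ref{foottie}, $\sup_{0\neq v_1\in Y_1^\delta}|(G_1w-f_1)(v_1)|^2/\|v_1\|_{Y_1}^2+\|G_2w-f_2\|_{Y_2'}^2=\|(Gw-f)|_{Y^\delta}\|_{(Y^\delta)'}^2$, so that \eqref{eq:minresprac} reads $u^\delta=\argmin_{w\in X^\delta}\tfrac12\|(Gw-f)|_{Y^\delta}\|_{(Y^\delta)'}^2$. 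The same identity, together with $\gamma^\delta\le1$ (immediate from $Y_1^\delta\subset Y_1$), shows that the inf--sup constant $\widetilde\gamma^\delta:=\inf_{0\neq w\in X^\delta}\|(Gw)|_{Y^\delta}\|_{(Y^\delta)'}/\|Gw\|_{Y'}$ of the pair $(X^\delta,Y^\delta)$ satisfies $\widetilde\gamma^\delta\ge\gamma^\delta$, so it is enough to prove the estimate with $\widetilde\gamma^\delta$ in place of $\gamma^\delta$.

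Next I would introduce the optimal test-function operator. With $R_{Y^\delta}\colon Y^\delta\to(Y^\delta)'$ the Riesz isomorphism, set $T^\delta w:=R_{Y^\delta}^{-1}\bigl((Gw)|_{Y^\delta}\bigr)\in Y^\delta$ and $\widehat Y^\delta:=T^\delta X^\delta$. Writing the Euler--Lagrange equations for this minimisation in terms of $\widehat Y^\delta$ gives $u^\delta\in X^\delta$ with $(Gu^\delta)(v)=f(v)$ for all $v\in\widehat Y^\delta$; since $f=Gu$, this is the Galerkin orthogonality $(G(u-u^\delta))(v)=0$ for $v\in\widehat Y^\delta$, and this is the sole place where the hypothesis $f\in\ran G$ enters. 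From the identity $(Gw)(T^\delta z)=\langle T^\delta w,T^\delta z\rangle_Y$ one obtains $\sup_{0\neq v\in\widehat Y^\delta}|(Gw)(v)|/\|v\|_Y=\|(Gw)|_{Y^\delta}\|_{(Y^\delta)'}$ for $w\in X^\delta$, so the Petrov--Galerkin pair $(X^\delta,\widehat Y^\delta)$ has inf--sup constant exactly $\widetilde\gamma^\delta$, while the form $(w,v)\mapsto(Gw)(v)$ has continuity constant $1$ with respect to the norm $\nrm\bigcdot\nrm_X=\|G\bigcdot\|_{Y'}$.

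Then I would run the Xu--Zikatanov argument. The inf--sup stability just noted makes the oblique projector $P^\delta\colon X\to X^\delta$, characterised by $(GP^\delta x)(v)=(Gx)(v)$ for all $v\in\widehat Y^\delta$, well defined and bounded; it is the identity on $X^\delta$, and $u^\delta=P^\delta u$ by the previous paragraph. For $x\in X$, $\widetilde\gamma^\delta\,\nrm P^\delta x\nrm_X\le\sup_{0\neq v\in\widehat Y^\delta}|(GP^\delta x)(v)|/\|v\|_Y=\sup_{0\neq v\in\widehat Y^\delta}|(Gx)(v)|/\|v\|_Y\le\|Gx\|_{Y'}=\nrm x\nrm_X$, hence $\|P^\delta\|_{\cL(X,X)}\le1/\widetilde\gamma^\delta$. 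As $P^\delta$ is a bounded idempotent with $\{0\}\neq X^\delta\neq X$, the classical identity $\|\identity-P^\delta\|_{\cL(X,X)}=\|P^\delta\|_{\cL(X,X)}$ applies, and since $(\identity-P^\delta)w=0$ for $w\in X^\delta$,
\[
\nrm u-u^\delta\nrm_X=\nrm(\identity-P^\delta)(u-w)\nrm_X\le\|\identity-P^\delta\|_{\cL(X,X)}\,\nrm u-w\nrm_X\le\tfrac1{\widetilde\gamma^\delta}\,\nrm u-w\nrm_X .
\]
Taking the infimum over $w\in X^\delta$ and using $1/\widetilde\gamma^\delta\le1/\gamma^\delta$ gives the theorem.

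The main obstacle is obtaining the \emph{sharp} constant $1/\gamma^\delta$ rather than $1+1/\gamma^\delta$: the naive estimate combines $\nrm w-u^\delta\nrm_X\le(\widetilde\gamma^\delta)^{-1}\nrm u-w\nrm_X$ (from inf--sup stability, $w\in X^\delta$) with $\nrm u-u^\delta\nrm_X\le\nrm u-w\nrm_X+\nrm w-u^\delta\nrm_X$ and loses a ``$1+$''. Eliminating it is exactly what forces the passage to the \emph{optimal} test space $\widehat Y^\delta$ --- so that the form has continuity constant $1$ and the discrete inf--sup constant is preserved --- and the use of the projector norm identity $\|\identity-P^\delta\|=\|P^\delta\|$ in place of $\|\identity-P^\delta\|\le1+\|P^\delta\|$. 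A secondary, routine point is to check that neither reduction, from $Y_1^\delta\times Y_2$ to $Y^\delta$ nor from $Y^\delta$ to $\widehat Y^\delta$, decreases the inf--sup constant.
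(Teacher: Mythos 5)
Your proposal is correct and follows essentially the same route as the paper's own proof: both pass to the combined test space $Y^\delta=Y_1^\delta\times Y_2$ via the footnote identity and the bound $\widetilde\gamma^\delta\ge\gamma^\delta$, both use the Riesz lift on $Y^\delta$ (your $T^\delta$ is exactly $R^\delta\circ G|_{X^\delta}$ in the paper's notation), both identify $u^\delta=P^\delta u$ for a projector with $\nrm P^\delta\nrm_{\cL(X,X)}\le 1/\widetilde\gamma^\delta$, and both invoke the Kato/Xu--Zikatanov identity $\nrm\identity-P^\delta\nrm=\nrm P^\delta\nrm$ to avoid the extraneous ``$1+$''. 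The only difference is presentational: you phrase the argument explicitly as a Petrov--Galerkin scheme with optimal test space $\widehat Y^\delta=T^\delta X^\delta$, whereas the paper works directly with $\langle R^\delta Gu^\delta,R^\delta Gw\rangle_Y$.
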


\begin{proof}
Using Footnote~\ref{foottie}, one infers that

$$
\widetilde{\gamma}^\delta := \inf_{0 \neq w \in X^\delta}\frac{\sup_{0 \neq v \in Y^\delta_1 \times Y_2} \frac{|(G w)(v)|}{\|v\|_{Y}}}{\|G w\|_{Y'}}  \geq \min(\gamma^\delta,1)=\gamma^\delta.
$$

With $Y^\delta:=Y_1^\delta \times Y_2$, it holds that 
$$
u^\delta:=\argmin_{w \in X^\delta} \tfrac12 \sup_{0 \neq v \in Y^\delta} \frac{|(G w-f)(v)|^2}{\|v\|_{Y}^2}.
$$
With $R^\delta \in \Lis({Y^\delta}',Y^\delta)$ defined by $g(v)=\langle R^\delta g,v\rangle_Y$ ($g \in {Y^\delta}'$, $v \in Y^\delta$), we have
$$
\tfrac12 \sup_{0 \neq v \in Y^\delta} \frac{|(G w-f)(v)|^2}{\|v\|_{Y}^2}=
\tfrac12 \sup_{0 \neq v \in Y^\delta} \frac{|\langle R^\delta (G w-f),v\rangle_Y|^2}{\|v\|_{Y}^2}=\tfrac12 \|R^\delta(G w-f)\|_Y^2,
$$
so that $u^\delta$ is the solution in $X^\delta$ of 
$$
\langle R^\delta G u^\delta,R^\delta G w\rangle_{Y}=\langle R^\delta f,R^\delta G w\rangle_{Y}\quad (w \in X^\delta).
$$

From $f \in \ran G$, we have $f=Gu$, and we conclude that $P^\delta:=u \mapsto u^\delta$ is a projector onto $X^\delta$.
By substituting $w=u^\delta$, we infer that
$$
\|R^\delta G u^\delta\|_{Y}^2 = \langle R^\delta Gu,R^\delta G u^\delta\rangle_{Y}=(Gu)(R^\delta G u^\delta) \leq \nrm u \nrm_{X}\|R^\delta G u^\delta\|_{Y},
$$
and so
$$
\widetilde{\gamma}^\delta \nrm u^\delta \nrm_{X}=\widetilde{\gamma}^\delta \|G u^\delta\|_{Y'}\leq \sup_{0 \neq v \in Y^\delta}\frac{|(G u^\delta)(v)|}{\|v\|_Y}
= \|R^\delta G u^\delta\|_{Y} \leq \nrm u \nrm_{X}.
$$
From $X^\delta \neq \{0\}$, $X^\delta \neq X$, we have $\nrm \identity-P^\delta\nrm_{\cL(X,X)}=\nrm P^\delta\nrm_{\cL(X,X)}$ (\cite{169.5,315.7}), so that
$$
\nrm u-u^\delta \nrm_X \leq \nrm \identity-P^\delta\nrm_{\cL(X,X)} \inf_{w \in X^\delta} \nrm u-w \nrm_X \leq \tfrac{1}{\widetilde{\gamma}^\delta} \inf_{w \in X^\delta} \nrm u-w \nrm_X. \qedhere
$$
\end{proof}



\subsection{Implementation} \label{sec:implementation}
With the Riesz' lift $R_1^\delta \in \Lis({Y_1^\delta}',Y_1^\delta)$ defined by
\be \label{eq:13}
g(v)=\langle R_1^\delta g,v\rangle_{Y_1} \quad(g \in {Y_1^\delta}',\,v \in Y_1^\delta),
\ee
an equivalent expression for $u^\delta$ defined in \eqref{eq:minresprac} is
$$
u^\delta=\argmin_{w \in X^\delta} \tfrac12 \big(
\|R_1^\delta (G_1 w-f)\|_{Y_1}^2
+
\|G_2 w-f_2\|_{Y'_2}^2\big),
$$
and so it solves the corresponding Euler-Lagrange equations
\be \label{eq:5}
(G_1 u^\delta-f_1)(R_1^\delta G_1 w)+\langle G_2 u^\delta-f_2,G_2 w\rangle_{Y_2'}=0 \quad(w \in X^\delta).
\ee
One may verify that
$$
(\gamma^\delta)^2 \nrm w \nrm_{X}^2 \leq (G_1 w)(R_1^\delta G_1 w)+\langle G_2 w,G_2 w\rangle_{Y_2'} \leq \nrm w \nrm_{X}^2 \quad (w \in X^\delta).
$$


Introducing $\lambda^\delta:=R_1^\delta(f_1-G_1 u^\delta)$, $u^\delta$ is the 2nd component of the pair $(\lambda^\delta,u^\delta) \in Y_1^\delta \times X^\delta$ that solves the \emph{saddle-point problem}
\be \label{eq:4}
\begin{split}
\langle \lambda^\delta,\mu\rangle_{Y_1}+(G_1 u^\delta)(\mu) &= f_1(\mu)\quad(\mu \in Y_1^\delta),\\
(G_1 w)(\lambda^\delta) -\langle G_2 u^\delta,G_2 w\rangle_{Y_2'}&=-\langle f_2,G_2w\rangle_{Y_2'} \quad (w \in X^\delta).
\end{split}
\ee

To solve \eqref{eq:4} efficiently using an iterative method, one needs `uniform' preconditioners for the `upper-left block' and the Schur complement equation, which is \eqref{eq:5}.
So one needs
\be \label{eq:precond}
K_{Y_1}^\delta \in \cLis({Y_1^\delta}',Y_1^\delta),\quad K_{X}^\delta\in \cLis({X^\delta}',X^\delta),
\ee 
with \emph{uniformly bounded norms}, and  \emph{uniformly bounded norms of their inverses},
and whose applications can be efficiently computed, preferably \emph{in linear time}. 
The last requirement relates to the basis that is applied on $Y_1^\delta$ or $X^\delta$, since constructing a preconditioner amounts to constructing an approximate inverse of the stiffness matrix corresponding to $\langle\bigcdot,\bigcdot\rangle_{Y_1}$ or $\langle\bigcdot,\bigcdot\rangle_{X}$.
In our applications preconditioners that satisfy these requirements will be available.
\medskip

Having such $K_{Y_1}^\delta$, a most likely even more efficient strategy is to \emph{replace} in \eqref{eq:minresprac}, or, equivalently, in \eqref{eq:4}, the scalar product $\langle \bigcdot,\bigcdot\rangle_{Y_1}$ on $Y_1^\delta \times Y_1^\delta$  by $((K_{Y_1}^\delta)^{-1}\bigcdot)(\bigcdot)$, and to solve the resulting $u^\delta \in X^\delta$ from the Schur complement equation 
\be \label{eq:minresprac2}
(G_1 u^\delta-f_1)(K_{Y_1}^\delta G_1 w)+\langle G_2 u^\delta-f_2,G_2 w\rangle_{Y_2'}=0 \quad(w \in X^\delta).
\ee
What is more, when both $\|\bigcdot\|_{Y_1'}$ \emph{and} $\|\bigcdot\|_{Y_1}$ are not evaluable, as when $Y_1$ is a fractional Sobolev space, then this is the \emph{only strategy} that leads to a computable quasi-best least squares approximation.

\begin{theorem}[{\cite[Thm.~3.6]{204.19}}{\protect \footnote{Mind the other definitions of $M^\delta$ and $m^\delta$ in \cite{204.19}.}}] \label{thm:weereen}
If $f \in \ran G$, then the solution $u^\delta \in X^\delta$ of \eqref{eq:minresprac2} satisfies
$$
\nrm u - u^\delta \nrm_{X}\leq \frac{\max(1,M^\delta)}{\gamma^\delta \min(1,m^\delta)} \inf_{w \in X^\delta} \nrm  u - w \nrm_{X},
$$
where
$$
M^\delta:=\sup_{0 \neq \mu \in Y^\delta_1}\frac{\|\mu\|_{Y_1}}{((K_{Y_1}^\delta)^{-1} \mu)(\mu)^\frac12},\quad
m^\delta:=\inf_{0 \neq \mu \in Y^\delta_1}\frac{\|\mu\|_{Y_1}}{((K_{Y_1}^\delta)^{-1} \mu)(\mu)^\frac12}.
$$
\end{theorem}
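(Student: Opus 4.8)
The plan is to reduce Theorem~\ref{thm:weereen} to the already-proven Theorem~\ref{thm:quasi-opt} by recognizing that replacing $\langle\bigcdot,\bigcdot\rangle_{Y_1}$ by $((K_{Y_1}^\delta)^{-1}\bigcdot)(\bigcdot)$ on $Y_1^\delta$ is nothing but passing to an equivalent inner product on the test space $Y_1^\delta$, and then quantifying how the relevant inf--sup constant degrades. Concretely, equip $Y_1^\delta$ with the new scalar product $\langle\bigcdot,\bigcdot\rangle_{\ast}:=((K_{Y_1}^\delta)^{-1}\bigcdot)(\bigcdot)$; by the definition of $m^\delta,M^\delta$ we have $m^\delta\|v\|_\ast\le\|v\|_{Y_1}\le M^\delta\|v\|_\ast$ for all $v\in Y_1^\delta$ (after taking square roots in the stated quotients). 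Then \eqref{eq:minresprac2} is exactly the Euler--Lagrange / Schur system for the minimization problem \eqref{eq:minresprac} with $\|\bigcdot\|_{Y_1}$ on $Y_1^\delta$ replaced by $\|\bigcdot\|_\ast$; this is the observation already recorded around \eqref{eq:minresprac2}, since $K_{Y_1}^\delta$ plays the role of the Riesz lift $R_1^\delta$ for $\langle\bigcdot,\bigcdot\rangle_\ast$.

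Next I would rerun the argument of Theorem~\ref{thm:quasi-opt} verbatim but with $Y_1^\delta$ carrying $\|\bigcdot\|_\ast$ and with the ambient product norm on $Y^\delta=Y_1^\delta\times Y_2$ correspondingly modified to $\|v\|_{Y,\ast}^2:=\|v_1\|_\ast^2+\|v_2\|_{Y_2}^2$. The operator $G$ satisfies \eqref{eq:1} with respect to $\|\bigcdot\|_Y$; since $\|\bigcdot\|_{Y,\ast}$ and $\|\bigcdot\|_Y$ are equivalent on $Y^\delta$ with constants $m^\delta$ and $M^\delta$ (only the first factor changes), the modified discrete inf--sup constant
$$
\widetilde{\gamma}^\delta_\ast:=\inf_{0\neq w\in X^\delta}\frac{\sup_{0\neq v\in Y^\delta}\frac{|(Gw)(v)|}{\|v\|_{Y,\ast}}}{\|Gw\|_{Y'}}
$$
is bounded below by $\gamma^\delta\min(1,m^\delta)$: one estimates $\sup_{v}\frac{|(Gw)(v)|}{\|v\|_{Y,\ast}}\ge \min(1,m^\delta)\sup_v\frac{|(Gw)(v)|}{\|v\|_Y}$ and then invokes the $\widetilde\gamma^\delta\ge\gamma^\delta$ bound from the proof of Theorem~\ref{thm:quasi-opt}. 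Running that proof with $R^\delta$ the Riesz lift for $\langle\bigcdot,\bigcdot\rangle_{Y,\ast}$ yields that $P^\delta:u\mapsto u^\delta$ is a projector onto $X^\delta$ and
$$
\nrm u^\delta\nrm_X=\|Gu^\delta\|_{Y'}\le\frac{1}{\widetilde{\gamma}^\delta_\ast}\|R^\delta Gu^\delta\|_{Y,\ast}\le\frac{\max(1,M^\delta)}{\widetilde{\gamma}^\delta_\ast}\nrm u\nrm_X,
$$
where the final inequality uses $\|R^\delta Gu^\delta\|_{Y,\ast}=\sup_v\frac{|(Gu^\delta)(v)|}{\|v\|_{Y,\ast}}$, the Cauchy--Schwarz step $\|R^\delta Gu^\delta\|_{Y,\ast}^2=(Gu)(R^\delta Gu^\delta)\le\nrm u\nrm_X\|R^\delta Gu^\delta\|_Y$, and then $\|R^\delta Gu^\delta\|_Y\le\max(1,M^\delta)\|R^\delta Gu^\delta\|_{Y,\ast}$. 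Combining with $\nrm\identity-P^\delta\nrm_{\cL(X,X)}=\nrm P^\delta\nrm_{\cL(X,X)}$ gives the asserted bound with constant $\max(1,M^\delta)/(\gamma^\delta\min(1,m^\delta))$.

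The main obstacle, such as it is, is purely bookkeeping: keeping straight which of the two equivalent norms on $Y^\delta$ is in force at each step, so that the factors $M^\delta$ (coming from comparing $R^\delta Gu^\delta$ in the two norms on the "upper" side) and $m^\delta$ (coming from the inf--sup degradation, i.e. the "lower" side) land in numerator and denominator respectively, and noting that the $Y_2$-factor contributes only the harmless $\min(1,\cdot)$ and $\max(1,\cdot)$. Since everything else is a line-by-line transcription of the proof of Theorem~\ref{thm:quasi-opt}, I would in practice just cite that theorem applied to the test space $(Y_1^\delta,\langle\bigcdot,\bigcdot\rangle_\ast)$ and insert the two norm-equivalence estimates; no genuinely new idea is needed, which is consistent with the statement being quoted from \cite[Thm.~3.6]{204.19}.
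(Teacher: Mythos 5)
Your argument is correct. The paper does not give its own proof of Theorem~\ref{thm:weereen} (it is imported by citation from \cite[Thm.~3.6]{204.19}), so there is nothing in-text to compare against line by line, but your reduction is precisely the one the paper signals in Remark~\ref{rem:4}: for $Y_1^\delta$ equipped with the scalar product $((K_{Y_1}^\delta)^{-1}\bigcdot)(\bigcdot)$, the operator $K_{Y_1}^\delta$ \emph{is} the Riesz lift, so \eqref{eq:minresprac2} is the Schur complement of a saddle-point system of the form \eqref{eq:4} with $Y_1^\delta$ carrying the $\ast$-inner product, and the proof of Theorem~\ref{thm:quasi-opt} can be rerun in that metric. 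Your bookkeeping is clean: from $m^\delta\|\bigcdot\|_\ast\le\|\bigcdot\|_{Y_1}\le M^\delta\|\bigcdot\|_\ast$ on $Y_1^\delta$ one gets $\min(1,m^\delta)\|v\|_{Y,\ast}\le\|v\|_Y\le\max(1,M^\delta)\|v\|_{Y,\ast}$ on $Y^\delta=Y_1^\delta\times Y_2$ (the $Y_2$ factor is untouched, which is what produces the $\min/\max$ with $1$), and these two inequalities are exactly where the $\min(1,m^\delta)$ in the denominator (degraded inf-sup) and the $\max(1,M^\delta)$ in the numerator (comparison of $\|R^\delta_\ast Gu^\delta\|_Y$ against $\|R^\delta_\ast Gu^\delta\|_{Y,\ast}$ after the Cauchy--Schwarz step) enter, respectively. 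The remaining steps — $P^\delta:u\mapsto u^\delta$ is a linear projector onto $X^\delta$ for $f\in\ran G$, and $\nrm\identity-P^\delta\nrm_{\cL(X,X)}=\nrm P^\delta\nrm_{\cL(X,X)}$ — carry over verbatim, so the stated constant follows.

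Two small remarks, neither a gap. First, the phrase ``after taking square roots in the stated quotients'' is slightly off: $M^\delta$ and $m^\delta$ are already defined with $\|\mu\|_\ast=((K_{Y_1}^\delta)^{-1}\mu)(\mu)^{1/2}$ in the denominator, so no further square root is needed. Second, the chain $\widetilde{\gamma}^\delta_\ast\ge\min(1,m^\delta)\,\widetilde{\gamma}^\delta\ge\min(1,m^\delta)\,\gamma^\delta$ is cleanest if you note explicitly that the first step is a pointwise comparison of the quotients $\frac{|(Gw)(v)|}{\|v\|_{Y,\ast}}$ and $\frac{|(Gw)(v)|}{\|v\|_{Y}}$ over the \emph{same} test set $Y^\delta$, before taking the supremum; you do say this, and it is the right way to see it.
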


\begin{remark} \label{rem:4} By comparing \eqref{eq:5} with \eqref{eq:minresprac2}, notice that for $Y_1^\delta$ equipped with scalar product $((K_{Y_1}^\delta)^{-1}\bigcdot)(\bigcdot)$, the
Riesz' lift ${Y_1^\delta}' \rightarrow Y_1^\delta$ is the operator $K_{Y_1}^\delta$.
\end{remark}

The \emph{symmetric positive definite system} \eqref{eq:minresprac2} can be efficiently solved using Preconditioned Conjugate Gradients with preconditioner $K_{X}^\delta$.

\subsection{A posteriori error estimation}
An obvious modification of \cite[Prop.~3.8]{204.19} that takes into account that in the current work $G$ is not necessarily surjective shows the following result.
\begin{proposition}[{\cite[Prop.~3.8 and Rem.~3.9]{204.19}}] \label{prop:apost}
Let $\Pi^\delta \in \cL(Y_1,Y_1^\delta)$ be a Fortin interpolator as in Theorem~\ref{thm:Fortin}. Then for $f \in \ran G$ and $w \in X^\delta$, the (squared) error estimator
$$
\cE^\delta(w,f):=(G_1 w-f_1)(K_{Y_1}^\delta (G_1 w-f_1)) +\|G_2 w-f_2\|_{Y_2'}^2
$$
satisfies
\begin{align*}
\min(1,\big(M^\delta)^{-2}\big) \cE^\delta(w,f) \leq \nrm u-w\nrm_X^2 \leq &\max\big(1,2 (m^\delta)^{-2}\big) \|\Pi^\delta\|_{\cL(Y_1,Y_1)}^2 \cE^\delta(w,f) \\
& +2 \|(\identity-{\Pi^{\delta}}') f_1\|_{Y_1'}^2,
\end{align*}
where
$$
\osc^\delta(f_1):=\|(\identity-{\Pi^{\delta}}') f_1\|_{Y_1'} \leq \|\Pi^\delta\|_{\cL(Y_1,Y_1)} \inf_{0 \neq z \in X^\delta} \nrm u - z\nrm_X.
$$
\end{proposition}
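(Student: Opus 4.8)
The plan is to derive the a posteriori result as a perturbation of Theorem~\ref{thm:weereen} together with the Fortin identity from Theorem~\ref{thm:Fortin}. Write $e:=u-w$ and recall $\nrm e\nrm_X = \|Ge\|_{Y'}$, which splits by Footnote~\ref{foottie} as $\nrm e\nrm_X^2 = \|G_1 e\|_{Y_1'}^2 + \|G_2 e\|_{Y_2'}^2$. Since $f\in\ran G$ we have $f=Gu$, hence $G_1 e = G_1 u - G_1 w = f_1 - G_1 w$ and likewise $G_2 e = f_2 - G_2 w$; so the second term of $\nrm e\nrm_X^2$ already equals $\|G_2 w-f_2\|_{Y_2'}^2$ exactly, and the whole game is to compare $\|G_1 e\|_{Y_1'}^2$ with the computable quantity $(G_1 w-f_1)(K_{Y_1}^\delta(G_1 w-f_1))$.

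For the lower bound I would argue: $(G_1 w-f_1)(K_{Y_1}^\delta(G_1 w-f_1)) = \langle K_{Y_1}^\delta(G_1 e),G_1 e\rangle_{Y_1}$-type expression; using the definition of $M^\delta$ as the worst-case ratio between $\|\mu\|_{Y_1}$ and $((K_{Y_1}^\delta)^{-1}\mu)(\mu)^{1/2}$, one gets $((K_{Y_1}^\delta)^{-1}\mu)(\mu) \geq (M^\delta)^{-2}\|\mu\|_{Y_1}^2$ for $\mu\in Y_1^\delta$, and dually $g(K_{Y_1}^\delta g)\leq (M^\delta)^2\|K_{Y_1}^\delta g\|_{Y_1}^2/\dots$ — more cleanly, since $\|g\|_{(Y_1^\delta)'}^2 = g(K_{Y_1}^\delta g)\cdot(\text{ratio})$ one shows $g(K_{Y_1}^\delta g)\leq (M^\delta)^2 \|g|_{Y_1^\delta}\|_{(Y_1^\delta)'}^2 \leq (M^\delta)^2\|g\|_{Y_1'}^2$ applied to $g=G_1 w-f_1=G_1 e$ (restriction to $Y_1^\delta$ only decreases the dual norm). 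Combining with the exact $G_2$ term gives $\min(1,(M^\delta)^{-2})\,\cE^\delta(w,f) \leq \nrm e\nrm_X^2$. Here the main subtlety is bookkeeping the $m^\delta,M^\delta$ constants consistently with the (warned-about) differing conventions in \cite{204.19}, and making sure the $\min(1,\cdot)$ correctly accounts for the $G_2$-factor having constant $1$.

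For the upper bound I would use the Fortin interpolator. Write $\|G_1 e\|_{Y_1'} \leq \|{\Pi^\delta}'(G_1 e)\|_{Y_1'} + \|(\identity-{\Pi^\delta}')(G_1 e)\|_{Y_1'}$. For the first term, ${\Pi^\delta}'(G_1 e)$ is tested only against $Y_1^\delta$, where one can pass to the $K_{Y_1}^\delta$-induced norm losing a factor $m^\delta$ (via $\|\mu\|_{Y_1}\leq M^\delta((K_{Y_1}^\delta)^{-1}\mu)(\mu)^{1/2}$, i.e. its dual $g(K_{Y_1}^\delta g)\geq (m^\delta)^{-2}\dots$ — again a convention check), and then $\|{\Pi^\delta}' g|_{Y_1^\delta}\|_{(Y_1^\delta)'} \leq \|\Pi^\delta\|_{\cL(Y_1,Y_1)}\|g|_{Y_1^\delta}\|_{(Y_1^\delta)'}$ since ${\Pi^\delta}'$ is the adjoint. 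For the second term use the defining property $(G_1 X^\delta)((\identity-\Pi^\delta)Y_1)=0$: since $w\in X^\delta$, $(G_1 w)((\identity-\Pi^\delta)v)=0$, so $(\identity-{\Pi^\delta}')(G_1 e)=(\identity-{\Pi^\delta}')(G_1 u - G_1 w) = (\identity-{\Pi^\delta}')(f_1) - 0 - (G_1 w)(\identity-\Pi^\delta)(\cdot)$; wait — more carefully $G_1 e = f_1 - G_1 w$, so $(\identity-{\Pi^\delta}')(G_1 e) = (\identity-{\Pi^\delta}') f_1 - (\identity-{\Pi^\delta}')(G_1 w)$ and the last term vanishes because $G_1 w$ annihilates $(\identity-\Pi^\delta)Y_1$. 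Hence this term equals exactly $\osc^\delta(f_1)=\|(\identity-{\Pi^\delta}')f_1\|_{Y_1'}$. Squaring the sum of two terms with the elementary $(a+b)^2\leq 2a^2+2b^2$, replacing $\|{\Pi^\delta}'(G_1 e)|_{Y_1^\delta}\|_{(Y_1^\delta)'}$ by its $\cE^\delta$-expression times $2(m^\delta)^{-2}\|\Pi^\delta\|^2$, and adding back $\|G_2 e\|_{Y_2'}^2 \leq \cE^\delta(w,f)$ (absorbed, since $\|\Pi^\delta\|\geq 1$ for a nontrivial projector and the constant in front is $\geq 1$) yields the stated $\max(1,2(m^\delta)^{-2})\|\Pi^\delta\|^2\cE^\delta + 2\,\osc^\delta(f_1)^2$.

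Finally, the bound $\osc^\delta(f_1)\leq \|\Pi^\delta\|_{\cL(Y_1,Y_1)}\inf_{z\in X^\delta}\nrm u-z\nrm_X$ follows by the same Fortin trick: for any $z\in X^\delta$, $(\identity-{\Pi^\delta}')f_1 = (\identity-{\Pi^\delta}')(f_1 - G_1 z) = (\identity-{\Pi^\delta}')G_1(u-z)$ (using $f_1=G_1 u$ and that $G_1 z$ kills $(\identity-\Pi^\delta)Y_1$), so $\|(\identity-{\Pi^\delta}')f_1\|_{Y_1'}\leq \|\identity-{\Pi^\delta}'\|_{\cL(Y_1',Y_1')}\|G_1(u-z)\|_{Y_1'} = \|\identity-\Pi^\delta\|_{\cL(Y_1,Y_1)}\|G_1(u-z)\|_{Y_1'}\leq \|\Pi^\delta\|_{\cL(Y_1,Y_1)}\nrm u-z\nrm_X$, where the last norm-equality $\|\identity-\Pi^\delta\|=\|\Pi^\delta\|$ is the projector identity already cited (\cite{169.5,315.7}) and $\|G_1(u-z)\|_{Y_1'}\leq\nrm u-z\nrm_X$ by Footnote~\ref{foottie}; then take the infimum over $z$. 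I expect the only real obstacle to be reconciling the constant conventions between this paper and \cite{204.19} (the footnote's warning) so that $M^\delta,m^\delta$ land on the correct side of each inequality; everything else is the standard Fortin/duality bookkeeping, with the one genuinely new point being that $G$ need not be surjective — which is harmless here because we only ever use $f\in\ran G$ to write $f=Gu$ and never need surjectivity of $G_1$ itself, exactly as Theorem~\ref{thm:Fortin} was stated to avoid.
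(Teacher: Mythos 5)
Your argument is correct and is exactly the natural/expected route; the paper itself supplies no proof here (it says "an obvious modification of \cite[Prop.~3.8]{204.19}"), so there is nothing to diverge from. To summarize the bookkeeping you did: with $e:=u-w$ and $f=Gu$ one has $G_ie=f_i-G_iw$; the norm equivalence $m^\delta\|g|_{Y_1^\delta}\|_{(Y_1^\delta)'}\leq (g(K_{Y_1}^\delta g))^{1/2}\leq M^\delta\|g|_{Y_1^\delta}\|_{(Y_1^\delta)'}$ applied to $g=G_1e$, together with $\|g|_{Y_1^\delta}\|_{(Y_1^\delta)'}\leq\|g\|_{Y_1'}$, gives the lower bound; the split $\|G_1e\|_{Y_1'}\leq\|{\Pi^\delta}'G_1e\|_{Y_1'}+\|(\identity-{\Pi^\delta}')f_1\|_{Y_1'}$ (using the Fortin annihilation to drop the $G_1w$ piece), the estimate $\|{\Pi^\delta}'G_1e\|_{Y_1'}\leq\|\Pi^\delta\|\,\|G_1e|_{Y_1^\delta}\|_{(Y_1^\delta)'}\leq(m^\delta)^{-1}\|\Pi^\delta\|\,(G_1e(K_{Y_1}^\delta G_1e))^{1/2}$, Young's inequality, and $\|\Pi^\delta\|\geq 1/\gamma^\delta\geq1$ to absorb the $\|G_2e\|_{Y_2'}^2$ term, yield the upper bound; and your identity $(\identity-{\Pi^\delta}')f_1=(\identity-{\Pi^\delta}')G_1(u-z)$ gives the oscillation estimate. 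All of these are as intended, and you correctly isolate the only role of $f\in\ran G$.

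One small point worth making explicit: your oscillation bound uses $\|\identity-{\Pi^\delta}'\|_{\cL(Y_1',Y_1')}=\|\identity-\Pi^\delta\|_{\cL(Y_1,Y_1)}=\|\Pi^\delta\|_{\cL(Y_1,Y_1)}$, and the last equality (Kato/Szyld) is a \emph{projector} identity. A general Fortin operator as in the first half of Theorem~\ref{thm:Fortin} need not be a projector, in which case you would only obtain the factor $\|\identity-\Pi^\delta\|\leq 1+\|\Pi^\delta\|$. The stated bound therefore implicitly takes $\Pi^\delta$ to be the projector furnished by the converse direction of Theorem~\ref{thm:Fortin} (which is indeed the choice used throughout Sections~\ref{sec:b}--\ref{sec:d} and in Remark~\ref{rem:2}); you invoke the identity correctly, but it would be cleaner to state this hypothesis rather than leave it tacit. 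The first two inequalities of the proposition, by contrast, do not need $\Pi^\delta$ to be a projector, as your proof shows.
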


\begin{remark} \label{rem:2}
If $\sup_{\delta \in \Delta} \frac{\max(1,M^\delta)}{\min(1,m^\delta)}<\infty$ and $\inf_{\delta \in \Delta} \gamma^\delta>0$, then by taking $\Pi^\delta$ such $\|\Pi^\delta\|_{\cL(Y_1,Y_1)}=1/\gamma^\delta$, we conclude that
$\cE^\delta(w,f) + \osc^\delta(f_1)^2 \eqsim \nrm u-w\nrm_X^2$.
The oscillation term is, however, not computable.

As we will see, in our applications $(X^\delta,Y^\delta)$ can be chosen such that it allows for the construction of Fortin interpolators
that are both uniformly bounded, and for which, for sufficiently smooth $f_1$, 
data oscillation is of a \emph{higher order} than what, in view of the order of $X^\delta$, can be expected for the best approximation error\linebreak $\inf_{0 \neq z \in X^\delta} \nrm u - z\nrm_X$. In that case the estimator $\cE^\delta(w,f)$ is thus not only efficient, but in any case asymptotically also reliable.
\end{remark}

For $u^\delta$ being the solution of \eqref{eq:minresprac2}, one has  available $G_i u^\delta-f_i$ ($i =1,2$) as well as the preconditioned residual $K_{Y_1}^\delta (G_1 u^\delta-f_1)$, so that $\cE^\delta(u^\delta,f)$ can be computed efficiently.

\begin{remark} \label{rem:3}
Proposition~\ref{prop:apost} is also applicable with $K_{Y_1}^\delta$ reading as the Riesz' lift $R_1^\delta$ from \eqref{eq:13} (cf. Remark~\ref{rem:4}),  in which case $M^\delta=m^\delta=1$.
This case is relevant when $\langle \bigcdot,\bigcdot\rangle_{Y_1}$ is evaluable, and $u^\delta$ is computed by solving the saddle-point system \eqref{eq:4}. Then the quantity $(G_1 u^\delta-f_1)(R_1^\delta (G_1 u^\delta-f_1))$ is available as $\|\lambda^\delta\|_{Y_1}^2$, and so $\cE^\delta(u^\delta,f)=\|\lambda^\delta\|_{Y_1}^2 + \|G_2 u^\delta-f_2\|_{Y_2'}^2$. In applications this (squared) error estimator splits into a sum of local error indicators which suggests an adaptive refinement procedure.
\end{remark}

\section{Application}  \label{sec:3}
\subsection{Model elliptic second order boundary value problem} \label{sec:model}
On a bounded Lipschitz domain $\Omega \subset \R^d$, where $d \geq 2$, and closed $\Gamma_D, \Gamma_N \subset \partial\Omega$, with $\Gamma_D \cup \Gamma_N =\partial\Omega$ and $|\Gamma_D \cap \Gamma_N|=0$, consider the following elliptic second order boundary value problem
\begin{equation} \label{bvp}
 \left\{
\begin{array}{r@{}c@{}ll}
-\divv A \nabla u+ B u &\,\,=\,\,& g &\text{ on } \Omega,\\
u &\,\,=\,\,& h_D &\text{ on } \Gamma_D,\\
\vec{n}\cdot A \nabla u &\,\,=\,\,& h_N &\text{ on } \Gamma_N,
\end{array}
\right.
\end{equation}
 where $B \in \cL(H^1(\Omega),L_2(\Omega))$, and $A(\bigcdot)=A(\bigcdot)^\top \in L_\infty(\Omega)^{d\times d}$ satisfies $\xi^\top A(\bigcdot) \xi \eqsim \|\xi\|^2$ ($\xi \in \R^d$).
We assume that the matrix $A$, and the first order operator $B$ are such that\footnote{When $\Gamma_D=\emptyset$, it can be needed to replace $H^1_{0,\Gamma_D}(\Omega)=H^1(\Omega)$ by $H^1(\Omega)/\R$. For simplicity, we do not consider this situation.}
$$
w\mapsto (v \mapsto \int_\Omega A \nabla w \cdot \nabla v +B w \, v \,dx)
\in \Lis\big(H^1_{0,\Gamma_D}(\Omega),H^1_{0,\Gamma_D}(\Omega)'\big).
$$

\subsection{Well-posed first order system reformulations} \label{sec:wellposed}
We consider two consistent first order system formulations \ref{second}--\ref{third} of \eqref{bvp}.\footnote{A third option is the first order ultra-weak formulation. In that formulation both Dirichlet and Neumann are natural and therefore do not require special attention. In Sect.~\ref{sec:applmodel} we will also consider the standard second order formulation.} 
From the formulations given, one easily derives the expressions for the operator `$G$', solution `$u$', right-hand side `$f$', and spaces `$X$' and `$Y$'. Implicitly we will assume that the data $g$, $h_D$, and $h_N$ are such that `$f$' is in dual of `$Y$'.

\begin{mylist}
\item({\bf mild formulation}) \label{second}
Find $(\vec{p},u) \in H(\divv;\Omega) \times H^1(\Omega)$ such that
$$
 \left\{
\begin{array}{r@{}c@{}ll}
\vec{p}-A \nabla u &\,\,=\,\,& 0 & \text{ in } L_2(\Omega)^d,\\
B u-\divv \vec{p}&\,\,=\,\,& g & \text{ in } L_2(\Omega),\\
 \gamma_{\Gamma_D}(u)&\,\,=\,\,& h_D & \text{ in } H^{\frac12}(\Gamma_D),\\
\gamma_{\Gamma_N}^{\vec{n}}(\vec{p})&\,\,=\,\,& h_N & \text{ in } H^{-\frac12}(\Gamma_N),
\end{array}
\right.
$$
where $ \gamma_{\Gamma_D}$ and $\gamma_{\Gamma_N}^{\vec{n}}$ are the trace or normal trace operators on $\Gamma_D$ and $\Gamma_N$, respectively, $H^{\frac12}(\Gamma_D)$ is the interpolation space $[L_2(\Gamma_D),H^1(\Gamma_D)]_{\frac12,2}$, 
$H^{-\frac12}(\Gamma_N):=H_{00}^{\frac12}(\Gamma_N)'$, where $H_{00}^{\frac12}(\Gamma_N):=[L_2(\Gamma_N),H^1_0(\Gamma_N)]_{\frac12,2}$.
The dual of $H^{\frac12}(\Gamma_D)$ is denoted by $\widetilde{H}^{-\frac12}(\Gamma_D)$.

\item({\bf mild-weak formulation}) \label{third}
Find $(\vec{p},u) \in L_2(\Omega)^d \times H^1(\Omega)$ such that
$$
 \left\{
\begin{array}{r@{}c@{}ll}
\vec{p}-A \nabla u &\,\,=\,\,& 0 & \text{ in } L_2(\Omega)^d,\\
\int_\Omega \vec{p}\cdot \nabla v+B u\,v\,dx &\,\,=\,\,& g(v) +\int_{\Gamma_N} h_N \gamma_{\Gamma_N}(v) \,ds& (v \in H^1_{0,\Gamma_D}(\Omega)),\\
 \gamma_{\Gamma_D}(u)&\,\,=\,\,& h_D & \text{ in } H^{\frac12}(\Gamma_D).
 \end{array}
\right.
$$
\end{mylist}

In \cite{249.96, 204.19} both formulations have been shown to be \emph{well-posed} in the strong sense that `$G$' is \emph{boundedly invertible} between `$X$' and the dual of `$Y$', which is \eqref{eq:1} together with surjectivity of `$G$'.

\begin{remark}Although not the focus of this work, we mention that for \emph{homogeneous} (essential) boundary conditions simplified formulations can be applied.
In \ref{second}, replace $H(\divv;\Omega) \times H^1(\Omega)$ by
$H_{0,\Gamma_N}(\divv;\Omega) \times H_{0,\Gamma_D}^1(\Omega)$ and omit the last two equations; and in \ref{third}, replace $H^1(\Omega)$ by $H_{0,\Gamma_D}^1(\Omega)$ and omit the last equation.
\end{remark}

\begin{remark} \label{rem:1} Formulation~\ref{second} has the advantage that both `field residuals' are measured in $L_2$-spaces. A disadvantage is that it requires that $g \in L_2(\Omega)$, whereas $g \in H^1_{0,\Gamma_D}(\Omega)'$ is allowed in \ref{third}.
Datum $g \in H^1_{0,\Gamma_D}(\Omega)'$ is, however,  covered when it is given as
\be \label{eq:6}
g(v)=\int_\Omega g_1 v-\vec{g}_2\cdot\nabla v\,dx, 
\ee
for some $g_1 \in L_2(\Omega)$ and $\vec{g}_2 \in L_2(\Omega)^d$. In that case, replace the two equations
$\vec{p}-A \nabla u = 0$ and $B u-\divv \vec{p}= g$ by $\vec{p}-A \nabla u = \vec{g}_2$ and $B u-\divv \vec{p}= g_1$.

Any $g\in H^1_{0,\Gamma_D}(\Omega)'$ can be written in the form \eqref{eq:6}, but in general it requires solving a PDE to find such a decomposition. In a finite element setting, an alternative approach is to replace  $g\in H^1_{0,\Gamma_D}(\Omega)'$ in \ref{second} by a suitable projection onto the space of piecewise polynomials. It was shown that the then resulting solution $(\vec{p}^\delta,u^\delta)$ is still quasi-optimal
(see \cite{75.068} for the lowest order case, and \cite[Rem.~4.7]{204.19} for the extension to general orders).
\end{remark}

The key to derive well-posedness of \ref{second}--\ref{third} in the aforementioned strong sense was the following abstract lemma.
\begin{lemma}[{\cite[Lemma 2.7]{75.28}}] \label{lem:bi}
Let $Z$ and $V_2$ be Banach spaces, and let $V_1$ be a normed linear space.
Let $T \in \cL(Z,V_2)$ be surjective, and let $H \in \cL(Z,V_1)$ be such that 
with $Z_0:=\ker T$, $H \in \Lis(Z_0,V_1)$. Then $(H,T) \in \Lis(Z,V_1 \times V_2)$.\footnote{If only $\|H\bigcdot\|_{V_1} \eqsim \|\bigcdot\|_Z$ on $Z_0$, then  $\|H\bigcdot\|_{V_1}+\|T\bigcdot\|_{V_2} \eqsim \|\bigcdot\|_Z$ on $Z$.}
\end{lemma}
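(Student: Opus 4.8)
The plan is to verify directly that $(H,T)\colon Z\to V_1\times V_2$, $z\mapsto(Hz,Tz)$, is a linear homeomorphism by establishing the two-sided bound $\|z\|_Z\eqsim\|Hz\|_{V_1}+\|Tz\|_{V_2}$ together with surjectivity. The upper estimate $\|Hz\|_{V_1}+\|Tz\|_{V_2}\lesssim\|z\|_Z$ is immediate from $H\in\cL(Z,V_1)$ and $T\in\cL(Z,V_2)$. Before proving the lower bound, I would observe that $Z_0=\ker T$ is closed in the Banach space $Z$, hence itself a Banach space, and that $H|_{Z_0}\in\Lis(Z_0,V_1)$ then forces $V_1$ to be isomorphic to a Banach space; in particular $V_1\times V_2$ is complete and the open mapping theorem applies to the surjection $T$ between the Banach spaces $Z$ and $V_2$.

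Using openness of $T$, fix $c>0$ so that every $v_2\in V_2$ admits a preimage $\tilde z\in Z$ with $T\tilde z=v_2$ and $\|\tilde z\|_Z\le c\|v_2\|_{V_2}$. Given an arbitrary $z\in Z$, apply this to $v_2:=Tz$ to get $\tilde z$ with $T\tilde z=Tz$ and $\|\tilde z\|_Z\le c\|Tz\|_{V_2}$; then $z-\tilde z\in Z_0$, so by bounded invertibility of $H|_{Z_0}$,
\[
\|z-\tilde z\|_Z\lesssim\|H(z-\tilde z)\|_{V_1}\le\|Hz\|_{V_1}+\|H\|_{\cL(Z,V_1)}\|\tilde z\|_Z\le\|Hz\|_{V_1}+c\,\|H\|_{\cL(Z,V_1)}\|Tz\|_{V_2},
\]
and $\|z\|_Z\le\|z-\tilde z\|_Z+\|\tilde z\|_Z$ yields $\|z\|_Z\lesssim\|Hz\|_{V_1}+\|Tz\|_{V_2}$. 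This simultaneously gives injectivity of $(H,T)$, boundedness of its inverse on its range, and closedness of that range.

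For surjectivity, given $(v_1,v_2)\in V_1\times V_2$, pick $z_2\in Z$ with $Tz_2=v_2$ (surjectivity of $T$), then $z_0\in Z_0$ with $Hz_0=v_1-Hz_2$ (surjectivity of $H|_{Z_0}$ onto $V_1$); the element $z:=z_0+z_2$ satisfies $Tz=v_2$ and $Hz=v_1$. Together with the a priori bound this shows $(H,T)\in\Lis(Z,V_1\times V_2)$. The footnote variant follows from the same lifting argument: if $H|_{Z_0}$ is merely bounded below, the displayed estimate still delivers $\|z\|_Z\lesssim\|Hz\|_{V_1}+\|Tz\|_{V_2}$ on all of $Z$, so $(H,T)$ is a homeomorphism onto its (now possibly proper) range, and one simply omits the surjectivity step.

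I expect the only genuinely delicate point to be the completeness bookkeeping: one must notice that $V_1$ inherits completeness from the closed subspace $Z_0$ in order to legitimately invoke the open mapping theorem for $T$ and obtain the bounded (nonlinear) right inverse used above; everything else is routine triangle-inequality manipulation.
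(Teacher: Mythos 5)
Your proof is correct. The paper does not prove this lemma itself but cites it from \cite[Lemma 2.7]{75.28}, and your argument (lift $Tz$ back to $Z$ with norm control via the open mapping theorem, correct the difference in $\ker T$ using the bounded invertibility of $H|_{Z_0}$, and handle surjectivity by the analogous two-step lifting) is exactly the standard proof of this fact; the only superfluous step is the completeness bookkeeping for $V_1$, since the open mapping theorem is invoked only for $T\colon Z\to V_2$ between spaces that are Banach by hypothesis, and the bounded invertibility of $(H,T)$ then follows from your two-sided norm estimate without any appeal to completeness of $V_1\times V_2$.
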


This lemma shows that it suffices to find a surjective trace operator that corresponds to the essential boundary conditions, and to show well-posedness of the problem with homogeneous essential boundary conditions. 
\medskip

In \ref{second} or \ref{third}, the space $Y'$ reads as $L_2(\Omega)^d \times L_2(\Omega) \times H^{\frac12}(\Gamma_D) \times H^{-\frac12}(\Gamma_N)$ or as $L_2(\Omega)^d \times H_{0,\Gamma_D}^1(\Omega)' \times H^{\frac12}(\Gamma_D)$.
To apply to \ref{second}--\ref{third} the Least Squares discretisation \eqref{eq:minresprac}/\eqref{eq:4}, or the modified one \eqref{eq:minresprac2}, 
for those factors $Y'_i$ in $Y'=\prod_i Y'_i$ that are unequal to $L_2$-spaces, one has to select test spaces $Y^\delta_i \subset Y_i$
for which $\sup_{0 \neq v \in Y_i^\delta} \frac{|(G_i w)(v)|}{\|v\|_{Y_i}} \gtrsim \|G_i w\|_{Y'_i}$ ($w \in X^\delta$), being the (uniform) inf-sup stability condition \eqref{eq:defgamma}. 

Furthermore, (uniform) preconditioners in $\cLis({X^\delta}',X^\delta)$, and for aforementioned $Y_i$, in $\cLis({Y_i^\delta}',Y_i^\delta)$ have to be found, preferably of linear computational complexity. If $Y_i$ is a fractional Sobolev space, then having such a (uniform) preconditioner is even \emph{indispensible} to circumvent the evaluation of the fractional norm (see the paragraph preceding Theorem~\ref{thm:weereen}).

For both formulations~\ref{second}--\ref{third}, in \cite[Sect.~4]{204.19} trial- and test-spaces of finite element type that give (uniform) inf-sup stability have been constructed, and suitable preconditioners are known. 

The construction of test-spaces on the boundary, and the implementation of preconditioners for fractional Sobolev spaces on the boundary require quite some effort.
Therefore, in the following subsection we present modified formulations, that are well-posed in the sense of \eqref{eq:1}, in which all function spaces on the boundary, specifically fractional Sobolev spaces, disappeared.
Although the operators `$G$' will not be surjective anymore, recall that for consistent data Least Squares discretisations yield quasi-optimal approximations.

\section{Avoidance of fractional Sobolev norms} \label{sec:4}
\subsection{Modified first order formulations in terms of field variables only} \label{sec:field}
With the trace operator $\gamma_{\Gamma_N}:= v \mapsto v|_{\Gamma_N} $, it is known that
$$
\|\bigcdot\|_{H_{00}^{\frac12}(\Gamma_N)} \eqsim \inf\{\|v\|_{H^1(\Omega)}\colon v \in H^1_{0,\Gamma_D}(\Omega),\,\gamma_{\Gamma_N}(v)=\bigcdot\}.
$$
From $H^{-\frac12}(\Gamma_N)=H_{00}^{\frac12}(\Gamma_N)'$, it follows that on $H^{-\frac12}(\Gamma_N)$,
\be \label{eq:16}
\|\bigcdot\|_{H^{-\frac12}(\Gamma_N)}
\eqsim
\sup_{0 \neq v \in H^1_{0,\Gamma_D}(\Omega)} \frac{|\int_{\Gamma_N}  \bigcdot \,\gamma_{\Gamma_N} v\,ds|}{\|v\|_{H^1(\Omega)}}=
\|\gamma_{\Gamma_N}' \bigcdot\|_{H^1_{0,\Gamma_D}(\Omega)'}.
\ee

Similarly, for the normal trace operator $\gamma^{\vec{n}}_{\Gamma_D}:=
\vec{v} \mapsto \vec{v}|_{\Gamma_D} \bigcdot \vec{n}$, it holds that
$$
\|\bigcdot\|_{\widetilde{H}^{-\frac12}(\Gamma_D)} \eqsim \inf\{\|\vec{v}\|_{H(\divv;\Omega)}\colon \vec{v} \in H_{0,\Gamma_N}(\divv;\Omega),\,\gamma^{\vec{n}}_{\Gamma_D}(\vec{v})=\bigcdot\},
$$
see e.g.~\cite[Remark~3.8]{35.856} (there with $\widetilde{H}^{-\frac12}(\Gamma_D)$ denoted by $H^{-\frac12}(\Gamma_D)$).
Recalling that $H^{\frac12}(\Gamma_D)'=\widetilde{H}^{-\frac12}(\Gamma_D)$,
it follows that that on $H^{\frac12}(\Gamma_D)$,
\be \label{eq:15}
\|\bigcdot\|_{H^{\frac{1}{2}}(\Gamma_D)}\eqsim
\sup_{0 \neq \vec{v} \in H_{0,\Gamma_N}(\divv;\Omega)} 
\frac{|\int_{\Gamma_D}\bigcdot\, \gamma^{\vec{n}}_{\Gamma_D} \vec{v} \,ds|}{\|\vec{v}\|_{H(\divv;\Omega)}}=
\|\gamma^{\vec{n}\,{\textstyle '}}_{\Gamma_D}\bigcdot\|_{H_{0,\Gamma_N}(\divv;\Omega)'}.
\ee

From the well-posedness of \ref{second}, and \eqref{eq:16} and \eqref{eq:15}, we conclude that the following modified formulation \ref{secondaccent} is well-posed in the sense that the corresponding operator $G\in \cL(X,Y')$ is a homeomorphism with its range, i.e., \eqref{eq:1} is valid.




\renewcommand{\themylistcounter}{(\roman{mylistcounter})'}
\begin{mylist}
\item({\bf modified mild formulation}) \label{secondaccent}
Find $(\vec{p},u) \in H(\divv;\Omega) \times H^1(\Omega)$ such that
$$
 \left\{
\begin{array}{r@{}c@{}ll}
\vec{p}-A \nabla u &\,\,=\,\,& 0 & \text{ in } L_2(\Omega)^d,\\
B u-\divv \vec{p}&\,\,=\,\,& g & \text{ in } L_2(\Omega),\\
\gamma^{\vec{n}\,{\textstyle '}}_{\Gamma_D} \gamma_{\Gamma_D}(u)&\,\,=\,\,& \gamma^{\vec{n}\,{\textstyle '}}_{\Gamma_D}h_D & \text{ in } H_{0,\Gamma_N}(\divv;\Omega)',\\
\gamma_{\Gamma_N}' \gamma_{\Gamma_N}^{\vec{n}}(\vec{p})&\,\,=\,\,&\gamma_{\Gamma_N}'h_N & \text{ in } H^1_{0,\Gamma_D}(\Omega)'.
\end{array}
\right.
$$
Knowing that the operator corresponding to \ref{second} is surjective, the range of the current operator is $L_2(\Omega)^d \times L_2(\Omega) \times \gamma^{\vec{n}\,{\textstyle '}}_{\Gamma_D} H^{\frac12}(\Gamma_D) \times \gamma_{\Gamma_N}' H^{-\frac12}(\Gamma_N)$.

A subset of the above arguments applied to the formulation \ref{third} shows that following modified formulation \ref{thirdaccent} is well-posed in the sense that the corresponding operator is a homeomorphism with its range, i.e., satisfies \eqref{eq:1}.

\item({\bf modified mild-weak formulation}) \label{thirdaccent}
Find $(\vec{p},u) \in L_2(\Omega)^d \times H^1(\Omega)$ such that
$$
 \left\{
\begin{array}{r@{}c@{}ll}
\vec{p}-A \nabla u &\,\,=\,\,& 0 & \text{ in } L_2(\Omega)^d,\\
\int_\Omega \vec{p}\cdot \nabla v+B u\,v\,dx &\,\,=\,\,& g(v) +\int_{\Gamma_N} h_N \gamma_{\Gamma_N}(v) \,ds& (v \in H^1_{0,\Gamma_D}(\Omega)),\\
\gamma^{\vec{n}\,{\textstyle '}}_{\Gamma_D} \gamma_{\Gamma_D}(u)&\,\,=\,\,& \gamma^{\vec{n}\,{\textstyle '}}_{\Gamma_D}h_D & \text{ in } H_{0,\Gamma_N}(\divv;\Omega)'.
\end{array}
\right.
$$
The range of the corresponding operator is $L_2(\Omega)^d \times H^1_{0,\Gamma_D}(\Omega)' \times \gamma^{\vec{n}\,{\textstyle '}}_{\Gamma_D} H^{\frac12}(\Gamma_D)$.
\end{mylist}

Although not very relevant for finite element computations, for the application with machine learning, in particular when $\Omega$ is a higher dimensional space, we give the following alternative for \eqref{eq:15}. It avoids the introduction of the space $H(\divv;\Omega)$ of $d$-dimensional vector fields, at the expense of requiring smoother test functions.

\begin{lemma} \label{lem:extra} For the Hilbert space $H_\triangle(\Omega):=\{v \in H^1(\Omega)/\R\colon \triangle v \in L_2(\Omega)\}$ equipped with squared norm $\|v\|_{H_\triangle(\Omega)}^2:=\|\triangle v\|_{L_2(\Omega)}^2+|v|^2_{H^1(\Omega)}$, and its closed linear subspace $H_{\triangle,0,\Gamma_N}(\Omega):=\{v \in H_\triangle(\Omega)\colon \gamma_{\Gamma_N}^{\vec{n}}(\nabla v)=0\}$, it holds that on $H^{\frac{1}{2}}(\Gamma_D)$,
$$
\|\bigcdot\|_{H^{\frac{1}{2}}(\Gamma_D)}\eqsim
\sup_{0 \neq v \in H_{\triangle,0,\Gamma_N}(\Omega)} 
\frac{|\int_{\Gamma_D} \bigcdot \,\,\gamma_{\Gamma_D}^{\vec{n}}(\nabla v) \,ds|}{|v|_{H_\triangle(\Omega)}}=
\|(\gamma_{\Gamma_D}^{\vec{n}} \circ \nabla)'\bigcdot\|_{H_{\triangle,0,\Gamma_N}(\Omega)'}.
$$
\end{lemma}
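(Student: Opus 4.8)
The plan is to establish the claimed norm equivalence on $H^{\frac12}(\Gamma_D)$ by comparing the dual expression over $H_{\triangle,0,\Gamma_N}(\Omega)$ with the already-established equivalence \eqref{eq:15} over $H_{0,\Gamma_N}(\divv;\Omega)$. The key observation is that $\nabla\colon H_{\triangle,0,\Gamma_N}(\Omega) \to H_{0,\Gamma_N}(\divv;\Omega)$ is a bounded linear map: if $v \in H_\triangle(\Omega)$ then $\vec{v}:=\nabla v \in L_2(\Omega)^d$ and $\divv\vec{v}=\triangle v \in L_2(\Omega)$, so $\vec{v}\in H(\divv;\Omega)$ with $\|\vec{v}\|_{H(\divv;\Omega)}^2 = |v|_{H^1(\Omega)}^2 + \|\triangle v\|_{L_2(\Omega)}^2 = |v|_{H_\triangle(\Omega)}^2$; moreover $\gamma_{\Gamma_N}^{\vec{n}}(\vec{v})=0$ exactly encodes membership in $H_{0,\Gamma_N}(\divv;\Omega)$. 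Since this map is a norm-preserving inclusion of test spaces, the supremum over $H_{\triangle,0,\Gamma_N}(\Omega)$ is immediately bounded by the supremum over $H_{0,\Gamma_N}(\divv;\Omega)$, giving the "$\lesssim$" direction of the equivalence with constant one. The middle equality with $\|(\gamma_{\Gamma_D}^{\vec{n}}\circ\nabla)'\bigcdot\|_{H_{\triangle,0,\Gamma_N}(\Omega)'}$ is then just the definition of the dual norm of the adjoint operator, since $\int_{\Gamma_D}\bigcdot\,\gamma_{\Gamma_D}^{\vec{n}}(\nabla v)\,ds = \big((\gamma_{\Gamma_D}^{\vec{n}}\circ\nabla)\,\bigcdot\big)(v)$ pairs a functional with $v$.

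The substantive direction is "$\gtrsim$", i.e.\ showing the supremum over the smaller space $H_{\triangle,0,\Gamma_N}(\Omega)$ still controls $\|\bigcdot\|_{H^{\frac12}(\Gamma_D)}$ from below. The natural route is to show that the normal-trace map $\gamma_{\Gamma_D}^{\vec{n}}\circ\nabla\colon H_{\triangle,0,\Gamma_N}(\Omega) \to \widetilde H^{-\frac12}(\Gamma_D)$ is \emph{surjective}, or, equivalently by the bounded right-inverse / open mapping argument, that for every $\phi \in \widetilde H^{-\frac12}(\Gamma_D)$ there exists $v \in H_{\triangle,0,\Gamma_N}(\Omega)$ with $\gamma_{\Gamma_D}^{\vec{n}}(\nabla v)=\phi$ and $|v|_{H_\triangle(\Omega)} \lesssim \|\phi\|_{\widetilde H^{-\frac12}(\Gamma_D)}$. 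Given such a right inverse, for any $\psi \in H^{\frac12}(\Gamma_D)$ one picks $\phi$ near-realizing $\|\psi\|_{H^{\frac12}(\Gamma_D)} = \sup_{\phi}\frac{|\langle\psi,\phi\rangle|}{\|\phi\|_{\widetilde H^{-\frac12}(\Gamma_D)}}$ and lifts it to $v$, so that $\frac{|\int_{\Gamma_D}\psi\,\gamma_{\Gamma_D}^{\vec n}(\nabla v)\,ds|}{|v|_{H_\triangle(\Omega)}} \gtrsim \frac{|\langle\psi,\phi\rangle|}{\|\phi\|_{\widetilde H^{-\frac12}(\Gamma_D)}}$, whence the lower bound. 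The cleanest way to produce this lift is to solve an auxiliary mixed boundary value problem: given $\phi\in\widetilde H^{-\frac12}(\Gamma_D)$, let $v \in H^1(\Omega)/\R$ solve $\triangle v = c$ in $\Omega$ (with a constant $c$ chosen by the compatibility condition), $\partial_{\vec n}v = \phi$ on $\Gamma_D$, $\partial_{\vec n} v = 0$ on $\Gamma_N$ — a well-posed Neumann problem since $\phi$ is supported away from (or vanishes compatibly on) $\Gamma_N$ in the $\widetilde H^{-\frac12}$ sense. Elliptic regularity / well-posedness in the graph norm then gives $v \in H_{\triangle,0,\Gamma_N}(\Omega)$ with the required bound, and by construction $\gamma_{\Gamma_D}^{\vec n}(\nabla v)=\phi$.

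The main obstacle is the careful handling of the function spaces on the split boundary: one must check that the Neumann datum "$\phi$ on $\Gamma_D$, $0$ on $\Gamma_N$" is a legitimate element of the correct dual space for the mixed problem — this is precisely where $\widetilde H^{-\frac12}(\Gamma_D)$ (the dual of the \emph{non-tilded} $H^{\frac12}(\Gamma_D)=[L_2,H^1]_{\frac12,2}$, i.e.\ functionals extendable by zero to $\partial\Omega$ in $H^{-\frac12}(\partial\Omega)$) versus $H^{-\frac12}(\Gamma_N)=H_{00}^{\frac12}(\Gamma_N)'$ matters, and the interplay with the constraint $\gamma_{\Gamma_N}^{\vec n}(\nabla v)=0$. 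An alternative, perhaps slicker, argument that sidesteps re-solving a PDE is: take the $H(\divv)$-realizer $\vec{v}\in H_{0,\Gamma_N}(\divv;\Omega)$ from \eqref{eq:15} and Helmholtz-decompose it as $\vec{v} = \nabla v + \vec{w}$ with $\divv\vec{w}=0$; the divergence-free remainder $\vec{w}$ does not contribute to $\int_{\Gamma_D}\psi\,\gamma_{\Gamma_D}^{\vec n}\vec{v}\,ds$ in a way that can be absorbed, and $\nabla v$ lies in $H_{\triangle,0,\Gamma_N}(\Omega)$ with norm bounded by $\|\vec v\|_{H(\divv;\Omega)}$ — but making the boundary bookkeeping of the decomposition precise is comparably delicate, so I would present the direct auxiliary-problem argument and relegate the Helmholtz viewpoint to a remark.
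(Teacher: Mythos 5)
Your proposal is correct and works, but it takes a genuinely different route from the paper's own argument in the key step. For the easy direction ($\sup \lesssim \|\cdot\|_{H^{1/2}(\Gamma_D)}$) you route through the embedding $\nabla\colon H_{\triangle,0,\Gamma_N}(\Omega) \hookrightarrow H_{0,\Gamma_N}(\divv;\Omega)$ and invoke \eqref{eq:15}; the paper instead argues directly and self-containedly by integration by parts and Cauchy--Schwarz, $\bigl|\int_{\partial\Omega} z\,\partial_{\vec n}v\,ds\bigr| = \bigl|\int_\Omega \nabla v\cdot\nabla z + z\triangle v\,dx\bigr|\leq \|v\|_{H_\triangle(\Omega)}\|z\|_{H^1(\Omega)}$. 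Your norm-preserving-embedding observation is a nice, clean alternative worth noting. For the hard direction you construct a bounded right inverse of $\gamma_{\Gamma_D}^{\vec n}\circ\nabla$ by solving an inhomogeneous pure Neumann problem $\triangle v = c$, $\partial_{\vec n}v=\phi$ on $\Gamma_D$, $\partial_{\vec n}v=0$ on $\Gamma_N$, with the constant $c=\phi(\mathbbm 1)/|\Omega|$ dictated by compatibility. The paper instead takes the $H^1(\Omega)$-Riesz representation of the boundary functional $z\mapsto f(\gamma_D z)$, i.e.\ solves $\int_\Omega \nabla v\cdot\nabla z + v z\,dx = f(\gamma_D z)$ over all $z\in H^1(\Omega)$. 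This has three advantages: there is no compatibility condition to manage, the equation $\triangle v = v$ falls out automatically (so $\|v\|_{H_\triangle(\Omega)} = \|v\|_{H^1(\Omega)}$ holds \emph{exactly}, not just up to a constant), and well-posedness is elementary Lax--Milgram with a coercive bilinear form rather than a quotient-space argument requiring Poincar\'e--Friedrichs. Your version still closes (once you supply the bound $|c|\lesssim\|\phi\|_{\widetilde H^{-1/2}(\Gamma_D)}$ and verify that the right-hand side of the weak form annihilates constants), and the step where you conclude $\gamma_{\Gamma_N}^{\vec n}(\nabla v)=0$ from the fact that the weak form only tests traces on $\Gamma_D$ is the same observation the paper makes. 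So both proofs are valid; the paper's $\triangle v = v$ trick is simply a neater way to package the lift, and you may find it worth adopting since it removes the compatibility bookkeeping you correctly flagged as the delicate point.
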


\begin{proof} It holds that $\|\bigcdot\|_{H^\frac12(\Gamma_D)} \eqsim \inf\{\|z\|_{H^1(\Omega)}\colon z\in H^1(\Omega),\,\gamma_D(z)=\bigcdot\}$. Given $f \in \widetilde{H}^{-\frac12}(\Gamma_D)=H^{\frac12}(\Gamma_D)'$, define $v \in H^1(\Omega)$ by
\be \label{101}
\int_\Omega \nabla v\cdot\nabla z+v z \,dx=f(\gamma_D z)\quad (z \in H^1(\Omega)).
\ee
Then 
\begin{align*}
\|f\|_{\widetilde{H}^{-\frac12}(\Gamma_D)} &\eqsim \sup_{\{z \in H^1(\Omega)\colon \gamma_D(z)\neq 0\}}\frac{|f(\gamma_D z)|}{\inf\{\|\tilde{z}\|_{H^1(\Omega)}\colon \tilde{z}\in H^1(\Omega),\,\gamma_D(\tilde{z}-z)=0\}}\\
& =\sup_{\{z \in H^1(\Omega)\colon \gamma_D(z)\neq 0\}}\frac{|f(\gamma_D z)|}{\|z\|_{H^1(\Omega)}}=\|v\|_{H^1(\Omega)}.
\end{align*}
Since $f(\gamma_D \bigcdot)$ vanishes on $H^1_0(\Omega)$, \eqref{101} shows that $\triangle v=v \in L_2(\Omega)$, so that
$\|v\|_{H_\triangle(\Omega)}=\|v\|_{H^1(\Omega)} \eqsim \|f\|_{\widetilde{H}^{-\frac12}(\Gamma_D)}$.
Furthermore,
$$
\int_{\partial \Omega} z \nabla v \cdot {\vec{n}}\,ds=\int_\Omega \nabla v\cdot\nabla z+z \triangle v\,dx=f(\gamma_D z) \quad (z \in H^1(\Omega)),
$$
 or $\gamma_{\Gamma_D}^{\vec{n}}(\nabla v)=f$ and $\gamma_{\Gamma_N}^{\vec{n}}(\nabla v)=0$, and so $v \in H_{\triangle,0,\Gamma_N}(\Omega)$. We conclude that \mbox{$\|\bigcdot\|_{H^{\frac{1}{2}}(\Gamma_D)}=\sup_{0 \neq f \in \widetilde{H}^{-\frac12}(\Gamma_D)}\frac{|f(\bigcdot)|}{\|f\|_{\widetilde{H}^{-\frac12}(\Gamma_D)}} \lesssim \sup_{0 \neq v \in H_{\triangle,0,\Gamma_N}(\Omega)} 
\frac{|\int_{\Gamma_D}\bigcdot \,\,\gamma_{\Gamma_N}^{\vec{n}}(\nabla v) \,ds|}{\|v\|_{H_\triangle(\Omega)}}$}.

Conversely, for arbitrary $v \in H_{\triangle,0,\Gamma_N}(\Omega)$ and $z \in H^1(\Omega)$, $|\int_{\partial \Omega} z \gamma_{\Gamma_D}^{\vec{n}}(\nabla v)\,ds|\leq \|v\|_{H_\triangle(\Omega)} \|z\|_{H^1(\Omega)}$, or 
$\sup_{0 \neq v \in H_{\triangle,0,\Gamma_N}(\Omega)} 
\frac{|\int_{\Gamma_D}\bigcdot \,\,\gamma_{\Gamma_D}^{\vec{n}}(\nabla v) \,ds|}{\|v\|_{H_\triangle(\Omega)}} \lesssim \|\bigcdot\|_{H^{\frac{1}{2}}(\Gamma_D)}$.
\end{proof}

Consequently, in \ref{secondaccent} and \ref{thirdaccent}, the equation $\gamma^{\vec{n}\,{\textstyle '}}_{\Gamma_D} \gamma_{\Gamma_D}(u)= \gamma^{\vec{n}\,{\textstyle '}}_{\Gamma_D}h_D$  in $H_{0,\Gamma_N}(\divv;\Omega)'$ can alternatively be replaced by
$(\gamma_{\Gamma_D}^{\vec{n}} \circ \nabla)'\gamma_{\Gamma_D}(u)=(\gamma_{\Gamma_D}^{\vec{n}} \circ \nabla)'h_D$  in $H_{\triangle,0,\Gamma_N}(\Omega)'$. This will be applied in Sect.~\ref{sec:applmodel}.

\subsection{Necessary inf-sup conditions}
To apply to \ref{secondaccent} or to \ref{thirdaccent} the  Least Squares discretisation \eqref{eq:minresprac}/\eqref{eq:4} or \eqref{eq:minresprac2}, 
one has to realize the following (uniform) inf-sup conditions.
\renewcommand{\themylistcounter}{(\alph{mylistcounter})} \begin{mylist}
\item \label{item:b} For \ref{secondaccent} and \ref{thirdaccent}:
Given a family of trial spaces $(U^\delta)_{\delta \in \Delta} \subset H^1(\Omega)$, we need a corresponding family of test spaces $(Y_{\rm{\ref{item:b}}}^\delta)_{\delta \in \Delta} \subset  H_{0,\Gamma_N}(\divv;\Omega)$ with
$$
\sup_{0 \neq \vec{v} \in Y_{\rm{\ref{item:b}}}^\delta} \frac{|\int_{\Gamma_D} \gamma_{\Gamma_D}(u) \gamma^{\vec{n}}_{\Gamma_D}(\vec{v})\,ds|}{\|\vec{v}\|_{H(\divv;\Omega)}} \gtrsim
\|\gamma^{\vec{n}\,{\textstyle '}}_{\Gamma_D}\gamma_{\Gamma_D}(u)\|_{H(\divv;\Omega)'} \quad(u \in U^\delta).
$$
\item \label{item:c} For \ref{secondaccent}: Given $(P^\delta)_{\delta \in \Delta} \subset H(\divv;\Omega)$, we need $(Y_{\rm{\ref{item:c}}}^\delta)_{\delta \in \Delta} \subset  H^1_{0,\Gamma_D}(\Omega)$
with 
$$
\sup_{0 \neq v \in Y_{\rm{\ref{item:c}}}^\delta} \frac{|\int_{\Gamma_N} \gamma_{\Gamma_N}^{\vec{n}}(\vec{p}) \gamma_{\Gamma_N}(v)\,ds|
}{\|v\|_{H^1(\Omega)}} 
\gtrsim
\| \gamma_{\Gamma_N}' \gamma_{\Gamma_N}^{\vec{n}}(\vec{p})\|_{(H^1_{0,\Gamma_D}(\Omega))'} \quad(\vec{p} \in P^\delta).
$$
\item \label{item:d} For \ref{thirdaccent}: Given $(P^\delta \times U^\delta)_{\delta \in \Delta} \subset L_2(\Omega)^d  \times H^1(\Omega)$, we need  $(Y_{\rm{\ref{item:d}}}^\delta)_{\delta \in \Delta} \subset  H^1_{0,\Gamma_D}(\Omega)$ with 
$$
\sup_{0 \neq v \in Y_{\rm{\ref{item:d}}}^\delta} \frac{|\int_\Omega \vec{p}\cdot\nabla v+Bu v\,dx|}{\|v\|_{H^1(\Omega)}}
\gtrsim
\sup_{0 \neq v \in H^1_{0,\Gamma_D}(\Omega)} \frac{|\int_\Omega \vec{p}\cdot\nabla v+Bu v\,dx|}{\|v\|_{H^1(\Omega)}} \quad(\vec{p},u) \in P^\delta \times U^\delta).
$$
\end{mylist}

In the following subsections, for the three cases \ref{item:b}--\ref{item:d} uniformly inf-sup stable pairs of finite element trial- and test spaces will be constructed.
For \ref{item:b} and \ref{item:c}, the test spaces $Y_{\rm{\ref{item:b}}}^\delta$ and $Y_{\rm{\ref{item:c}}}^\delta$ will be finite element spaces w.r.t.~conforming partitions of $\Omega$ whose intersections with $\Gamma_D$ or $\Gamma_N$ coincide with the underlying partition of the trial space, but which are maximally coarsened when moving away from $\Gamma_D$ or $\Gamma_N$, respectively. Although such highly non-uniform partitions give the smallest appropriate test spaces, clearly for convenience one may apply larger test spaces without jeopardizing the inf-sup stability.

Dependent on the formulation and the type of boundary condition (Dirichlet, Neumann, or mixed), an efficient iterative solution of the resulting Least Squares discretisations requires preconditioners for stiffness matrices of trial- or test- finite element spaces w.r.t.~scalar products on $H^1(\Omega)$, $H(\divv;\Omega)$, $H^1_{0,\Gamma_D}(\Omega)$, and $H_{0,\Gamma_N}(\divv;\Omega)$. Such preconditioners of linear complexity are available.

\subsection{Verification of inf-sup condition \ref{item:b}} \label{sec:b} Let $\Omega \subset \R^d$ be a polytope, $\tria^\delta$ be a conforming, (uniformly) shape regular partition of $\overline{\Omega}$ into (closed) $d$-simplices, and let $\cF(\tria^\delta)$ denote the set of (closed) facets of $K \in \tria^\delta$. Assume that $\Gamma_D$ is the union of some $e \in \cF(\tria^\delta)$. For some $q \in \N_0$, let 
\be \label{eq:19}
U^\delta:=\cS^{0}_{q+1}(\tria^\delta),
\ee
being the space of continuous piecewise polynomials of degree $q+1$ w.r.t.~$\tria^\delta$.

In order to prove inf-sup stability for the `original' mild and mild-weak formulations \ref{second} and \ref{third}, in \cite[Sect.~4.1]{204.19} a uniformly bounded `Fortin' interpolator
$\Pi_D^\delta \in \cL(\widetilde{H}^{-\frac12}(\Gamma_D),\widetilde{H}^{-\frac12}(\Gamma_D))$ has been constructed (there denoted by $\Pi_2^\delta$), with
 $\ran \Pi_D^\delta \subset \cS^{-1}_{q+1}(\cF(\tria^\delta) \cap \Gamma_D)$, being the space piecewise polynomials of degree $q+1$ w.r.t.~the partition $\cF(\tria^\delta) \cap \Gamma_D$, $\ran \gamma_{\Gamma_D}|_{U^\delta} \perp_{L_2(\Gamma_D)} \ran (\identity-\Pi_D^\delta)$, and 
$$
\|(\identity-{\Pi_D^\delta}') h_D\|_{H^{\frac12}(\Gamma_D)} \lesssim \sqrt{\sum_{e \in \cF(\tria^\delta) \cap \Gamma_D} \diam(e)^{2q+3} |h_D|_{H^{q+2}(e)}^2},
$$
for all $h_D \in H^{\frac12}(\Gamma_D)$ for which the right-hand side is finite.

To verify \ref{item:b}, we will construct a (uniformly) bounded right-inverse of the normal trace operator $\gamma_{\Gamma_D}^{\vec{n}}\in \cL(H_{0,\Gamma_N}(\divv;\Omega),\widetilde{H}^{-\frac12}(\Gamma_D)$ that maps $\cS^{-1}_{q+1}(\cF(\tria^\delta) \cap \Gamma_D)$ into a finite element space that will be used as the test space $Y_{\rm{\ref{item:b}}}^\delta$.

\begin{lemma} \label{lem:ext} Let $d \in \{2,3\}$. For \emph{any} (uniformly) shape regular partition $\tria_D^\delta$ of $\overline{\Omega}$ into (closed) $d$-simplices for which
$$
\cF(\tria_D^\delta) \cap \Gamma_D=   \cF(\tria^\delta) \cap \Gamma_D,
$$
there exists a linear
\be \label{eq:20}
E_D^\delta\colon \cS^{-1}_{q+1}(\cF(\tria^\delta) \cap \Gamma_D) \rightarrow Y_{\rm{\ref{item:b}}}^\delta:= \RT_{q+1}(\tria_D^\delta) \cap H_{0,\Gamma_N}(\divv;\Omega)
\ee
with $\gamma^{\vec{n}}_{\Gamma_D} E_D^\delta=\identity$, and 
$\sup_{\delta \in \Delta} \sup_{\{0\} \neq v \in \cS^{-1}_{q+1}(\cF(\tria^\delta) \cap \Gamma_D)}\frac{\|E_D^\delta v\|_{H(\divv;\Omega)}}{\|v\|_{\widetilde{H}^{-\frac12}(\Gamma_D)}}<\infty$.
 \end{lemma}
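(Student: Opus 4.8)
The statement asks for a uniformly bounded right-inverse $E_D^\delta$ of the normal trace $\gamma_{\Gamma_D}^{\vec{n}}$ that takes piecewise polynomials on $\cF(\tria^\delta)\cap\Gamma_D$ into the Raviart--Thomas space $\RT_{q+1}(\tria_D^\delta)$ and respects the homogeneous normal condition on $\Gamma_N$. The natural strategy is to build $E_D^\delta$ as a \emph{local} operator: for each facet $e\in\cF(\tria^\delta)\cap\Gamma_D$, pick the (at least one) simplex $K_e\in\tria_D^\delta$ having $e$ as a face, and construct on $K_e$ an $\RT_{q+1}$-field whose normal trace equals the prescribed polynomial $v|_e$ on $e$ and vanishes on the other $d$ facets of $K_e$; extend by zero to the rest of $\Omega$. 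Since $v|_e$ has no inter-element continuity requirement (the target space $\cS^{-1}_{q+1}$ is fully discontinuous across facets) and since an $\RT_{q+1}$-field with zero normal trace on all but one facet of a simplex automatically has zero normal component across every interior facet, the glued field lies in $H(\divv;\Omega)$, and because all $K_e$ touch $\Gamma_D$ (not $\Gamma_N$) and the field is zero away from $\Gamma_D$, it lies in $H_{0,\Gamma_N}(\divv;\Omega)$. The identity $\gamma^{\vec{n}}_{\Gamma_D}E_D^\delta=\identity$ is then immediate from the construction.

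**The local building block.** On the reference simplex $\widehat K$ with a distinguished face $\widehat e$, one needs a linear map sending a degree-$(q{+}1)$ polynomial $\widehat\mu$ on $\widehat e$ to $\widehat{\vec q}\in\RT_{q+1}(\widehat K)$ with $\widehat{\vec q}\cdot\widehat{\vec n}=\widehat\mu$ on $\widehat e$ and $\widehat{\vec q}\cdot\widehat{\vec n}=0$ on the other faces. This is a standard surjectivity fact for Raviart--Thomas spaces (the normal-trace degrees of freedom on the faces, together with interior moments, span the DOF set), so such a map exists; on the reference element it is trivially bounded, say $\|\widehat{\vec q}\|_{H(\divv;\widehat K)}\lesssim\|\widehat\mu\|_{L_2(\widehat e)}$. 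Pulling back via the Piola transform associated with the affine map $\widehat K\to K_e$ and using shape regularity gives the scaling
\[
\|\vec q_e\|_{L_2(K_e)} + \diam(e)\,\|\divv\vec q_e\|_{L_2(K_e)} \lesssim \diam(e)^{1/2}\,\|v|_e\|_{L_2(e)},
\]
uniformly in $\delta$ and in $e$; here I track the $\diam(e)$ powers carefully because the Piola transform scales the normal component and the divergence differently. Summing these local estimates over the (finitely overlapping, by shape regularity) simplices $K_e$ yields
\[
\|E_D^\delta v\|_{H(\divv;\Omega)}^2 \lesssim \sum_{e\in\cF(\tria^\delta)\cap\Gamma_D}\bigl(\diam(e)\,\|v|_e\|_{L_2(e)}^2 + \diam(e)^{-1}\,\|v|_e\|_{L_2(e)}^2\bigr),
\]
where the second term dominates for small mesh-size and matches, up to constants, the inverse-inequality characterization $\sum_e\diam(e)^{-1}\|v|_e\|_{L_2(e)}^2$ of $\|v\|_{\widetilde H^{-1/2}(\Gamma_D)}^2$ for piecewise polynomials in $\cS^{-1}_{q+1}(\cF(\tria^\delta)\cap\Gamma_D)$.

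**The main obstacle.** The delicate point is \emph{not} the construction but the norm bound: I must show $\|E_D^\delta v\|_{H(\divv;\Omega)}\lesssim\|v\|_{\widetilde H^{-1/2}(\Gamma_D)}$ with $\widetilde H^{-1/2}(\Gamma_D)$ the \emph{intrinsic} (interpolation-space dual) norm, not a mesh-dependent one. The localized estimate above only delivers the upper bound against the stronger $h$-weighted norm $\bigl(\sum_e\diam(e)^{-1}\|v|_e\|_{L_2(e)}^2\bigr)^{1/2}$, which controls — but is not controlled by — the $\widetilde H^{-1/2}(\Gamma_D)$-norm on the discrete space. So a purely local construction is insufficient, and I expect one must instead argue via duality and the Fortin operator $\Pi_D^\delta$ already available from \cite[Sect.~4.1]{204.19}: for $v=\Pi_D^\delta\psi$ with $\psi\in\widetilde H^{-1/2}(\Gamma_D)$ one estimates
\[
\|E_D^\delta v\|_{H(\divv;\Omega)} = \sup_{\vec w\in H_{0,\Gamma_N}(\divv;\Omega)}\frac{|\langle E_D^\delta v,\vec w\rangle_{H(\divv;\Omega)}|}{\|\vec w\|_{H(\divv;\Omega)}},
\]
and one rewrites the numerator using a discrete inf-sup/commuting-projection property of $\RT_{q+1}(\tria_D^\delta)$ together with $\gamma^{\vec n}_{\Gamma_D}E_D^\delta v = v$, transferring the estimate back to $\|v\|_{\widetilde H^{-1/2}(\Gamma_D)}\lesssim\|\psi\|_{\widetilde H^{-1/2}(\Gamma_D)}$. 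Concretely, the cleanest route is probably: take $\psi\in\widetilde H^{-1/2}(\Gamma_D)$, extend it to $\vec V_\psi\in H_{0,\Gamma_N}(\divv;\Omega)$ with $\gamma^{\vec n}_{\Gamma_D}\vec V_\psi=\psi$ and $\|\vec V_\psi\|_{H(\divv;\Omega)}\eqsim\|\psi\|_{\widetilde H^{-1/2}(\Gamma_D)}$ (the extension result cited via \cite[Remark~3.8]{35.856}), apply a uniformly bounded projection $\pi^\delta:H_{0,\Gamma_N}(\divv;\Omega)\to\RT_{q+1}(\tria_D^\delta)\cap H_{0,\Gamma_N}(\divv;\Omega)$ commuting appropriately with normal traces so that $\gamma^{\vec n}_{\Gamma_D}\pi^\delta\vec V_\psi=\Pi_D^\delta\psi$, and \emph{define} $E_D^\delta:=\pi^\delta\circ\vec V_{(\cdot)}$ on $\ran\Pi_D^\delta$, extending to all of $\cS^{-1}_{q+1}(\cF(\tria^\delta)\cap\Gamma_D)$ by (say) composing with a right-inverse of $\Pi_D^\delta$ restricted to its range, or simply noting $\ran\Pi_D^\delta$ may be taken equal to the whole target facet space. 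Uniform boundedness of $E_D^\delta$ is then inherited from that of $\pi^\delta$ and of the extension $\vec V_{(\cdot)}$. Verifying existence of such a uniformly bounded, normal-trace-commuting $\pi^\delta$ on these (highly graded but shape-regular) partitions — likely a Clément/Schöberl–Zaglmayr-type quasi-interpolant for $\RT$ — is where the real work lies.
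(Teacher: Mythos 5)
You correctly diagnose that a purely local ``one facet at a time'' construction can only deliver a bound against the mesh-dependent norm $\bigl(\sum_e \diam(e)^{-1}\|v\|_{L_2(e)}^2\bigr)^{1/2}$, which dominates but is not dominated by $\|v\|_{\widetilde H^{-1/2}(\Gamma_D)}$; that much is a genuine insight. But your fallback argument is left incomplete at exactly the decisive step: you propose a projection $\pi^\delta$ onto $\RT_{q+1}(\tria_D^\delta)\cap H_{0,\Gamma_N}(\divv;\Omega)$ that is simultaneously uniformly $H(\divv)$-bounded \emph{and} commutes with the normal trace in the precise sense $\gamma^{\vec{n}}_{\Gamma_D}\pi^\delta \vec{V}_\psi=\Pi_D^\delta\psi$, and you explicitly defer its construction. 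That is not a routine gap to fill -- a Cl\'ement/Sch\"oberl--Zaglmayr type quasi-interpolant generally does \emph{not} reproduce a prescribed discrete normal trace exactly, so as written you neither obtain the identity $\gamma^{\vec n}_{\Gamma_D}E_D^\delta=\identity$ (you only get $\Pi_D^\delta$) nor the uniform bound. The ``compose with a right-inverse of $\Pi_D^\delta$'' patch is likewise unjustified; its uniform boundedness is again the whole point.

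The paper's proof avoids the need for trace commutation entirely by a cancellation trick. Given $v$, take an \emph{arbitrary} discrete extension $\vec w^\delta\in\RT_{q+1}(\tria_D^\delta)\cap H_{0,\Gamma_N}(\divv;\Omega)$ with $\gamma^{\vec n}_{\Gamma_D}\vec w^\delta=v$ and a near-optimal continuous extension $\vec w\in H_{0,\Gamma_N}(\divv;\Omega)$ with $\gamma^{\vec n}_{\Gamma_D}\vec w=v$ and $\|\vec w\|_{H(\divv;\Omega)}\lesssim\|v\|_{\widetilde H^{-1/2}(\Gamma_D)}$. Since $\vec w-\vec w^\delta\in H_0(\divv;\Omega)$, one may apply the commuting projector $Q_{\tria_D^\delta}$ of \cite{70.991} (which preserves the \emph{homogeneous} normal trace, satisfies $\divv Q=\Pi\divv$, and has the local best-approximation property \eqref{eq:7}) and set $E_D^\delta v:=Q_{\tria_D^\delta}(\vec w-\vec w^\delta)+\vec w^\delta$. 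The correction $Q_{\tria_D^\delta}(\vec w-\vec w^\delta)$ has vanishing normal trace everywhere, so $\gamma^{\vec n}_{\Gamma_D}E_D^\delta v=v$ exactly. For the bound, $\divv E_D^\delta v=\Pi_{\tria_D^\delta}\divv\vec w$ is controlled by $\|\divv\vec w\|_{L_2}$; and in the local $L_2$ estimate \eqref{eq:7} the arbitrary $\vec w^\delta$ \emph{drops out} of both the per-element best-approximation-by-$\RT_{q+1}(K')$ term and the $(\identity-\Pi_{\tria_D^\delta})\divv$ term, because $\vec w^\delta|_{K'}\in\RT_{q+1}(K')$ and $\divv\vec w^\delta\in\cS^{-1}_{q+1}(\tria_D^\delta)$. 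Hence only $\vec w$ enters, giving $\|E_D^\delta v\|_{H(\divv;\Omega)}\lesssim\|\vec w\|_{H(\divv;\Omega)}\lesssim\|v\|_{\widetilde H^{-1/2}(\Gamma_D)}$. This is substantially simpler and more robust than the route you sketch: it only needs a projector onto the \emph{homogeneous} space $H_0(\divv;\Omega)$ with local RT-optimality, not one that carries over boundary data.
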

Here $\RT_{q+1}(\tria^\delta_D)\cap H_{0,\Gamma_N}(\divv;\Omega)$ denotes the space of Raviart-Thomas functions of order $q+1$ w.r.t.~$\tria^\delta_D$ whose normal components vanish at $\Gamma_N$.
Before we prove this lemma, we use it to demonstrate \ref{item:b}.
We set
$$
\Pi_{\rm{\ref{item:b}}}^\delta:=E_D^\delta \Pi_D^\delta \gamma^{\vec{n}}_{\Gamma_D}.
$$
Then $\Pi_{\rm{\ref{item:b}}}^\delta\colon H_{0,\Gamma_N}(\divv;\Omega) \rightarrow Y_{\rm{\ref{item:b}}}^\delta \subset H_{0,\Gamma_N}(\divv;\Omega) $ is uniformly bounded, and $\ran \gamma_{\Gamma_D}|_{U^\delta} \perp_{L_2(\Gamma_D)} \ran \gamma^{\vec{n}}_{\Gamma_D}(\identity-\Pi_{\rm{\ref{item:b}}}^\delta)$
as a consequence of $\gamma^{\vec{n}}_{\Gamma_D} \Pi_{\rm{\ref{item:b}}}^\delta=\Pi_D^\delta  \gamma^{\vec{n}}_{\Gamma_D}$, meaning that $\Pi_{\rm{\ref{item:b}}}^\delta$ is a valid Fortin interpolator. From  Theorem~\ref{thm:Fortin}  we conclude the following result.

\begin{theorem} \label{thm: infsup1} With $U^\delta$, $Y_{\rm{\ref{item:b}}}^\delta$ defined in \eqref{eq:19}-\eqref{eq:20}, the uniform inf-sup condition \ref{item:b} holds true.
\end{theorem}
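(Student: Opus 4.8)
The plan is to verify \ref{item:b} by producing a uniformly bounded Fortin interpolator and invoking Theorem~\ref{thm:Fortin}. Matching \ref{item:b} to the abstract setting of Section~\ref{sec:discretization}, the relevant data are $X^\delta := U^\delta$ (only the $H^1(\Omega)$-component of the trial space matters, since the residual depends on $u$ alone), $Y_1 := H_{0,\Gamma_N}(\divv;\Omega)$, $Y_1^\delta := Y_{\rm{\ref{item:b}}}^\delta$, and $G_1 := \gamma^{\vec{n}\,{\textstyle '}}_{\Gamma_D}\gamma_{\Gamma_D}$, so that $(G_1 u)(\vec{v}) = \int_{\Gamma_D}\gamma_{\Gamma_D}(u)\,\gamma^{\vec{n}}_{\Gamma_D}(\vec{v})\,ds$ and the denominator of \ref{item:b} is precisely $\|G_1 u\|_{Y_1'}$; thus the inf-sup constant of \ref{item:b} is the $\gamma^\delta$ of \eqref{eq:defgamma}. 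By Theorem~\ref{thm:Fortin} it therefore suffices to exhibit $\Pi_{\rm{\ref{item:b}}}^\delta \in \cL(Y_1,Y_1^\delta)$ with $(G_1 U^\delta)\big((\identity-\Pi_{\rm{\ref{item:b}}}^\delta)Y_1\big)=0$ and $\sup_{\delta\in\Delta}\|\Pi_{\rm{\ref{item:b}}}^\delta\|_{\cL(Y_1,Y_1)}<\infty$; this gives $\gamma^\delta\geq\|\Pi_{\rm{\ref{item:b}}}^\delta\|_{\cL(Y_1,Y_1)}^{-1}\gtrsim 1$.

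I would take $\Pi_{\rm{\ref{item:b}}}^\delta := E_D^\delta\,\Pi_D^\delta\,\gamma^{\vec{n}}_{\Gamma_D}$, a composition of three uniformly bounded pieces (read right to left): the normal trace $\gamma^{\vec{n}}_{\Gamma_D}\in\cL(H_{0,\Gamma_N}(\divv;\Omega),\widetilde{H}^{-\frac12}(\Gamma_D))$; the boundary Fortin operator $\Pi_D^\delta$ of \cite[Sect.~4.1]{204.19}, which is uniformly bounded on $\widetilde{H}^{-\frac12}(\Gamma_D)$, maps into $\cS^{-1}_{q+1}(\cF(\tria^\delta)\cap\Gamma_D)$, and satisfies $\ran\gamma_{\Gamma_D}|_{U^\delta}\perp_{L_2(\Gamma_D)}\ran(\identity-\Pi_D^\delta)$; and the extension $E_D^\delta$ of Lemma~\ref{lem:ext}, which maps $\cS^{-1}_{q+1}(\cF(\tria^\delta)\cap\Gamma_D)$ into $Y_{\rm{\ref{item:b}}}^\delta=\RT_{q+1}(\tria_D^\delta)\cap H_{0,\Gamma_N}(\divv;\Omega)$, is uniformly bounded $\widetilde{H}^{-\frac12}(\Gamma_D)\to H(\divv;\Omega)$, and is a right inverse of the normal trace, $\gamma^{\vec{n}}_{\Gamma_D}E_D^\delta=\identity$.

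Two checks then finish the proof. First, uniform boundedness of $\Pi_{\rm{\ref{item:b}}}^\delta\colon H_{0,\Gamma_N}(\divv;\Omega)\to Y_{\rm{\ref{item:b}}}^\delta$ is the product of the three uniform bounds. Second, the right-inverse property gives the commutation $\gamma^{\vec{n}}_{\Gamma_D}\Pi_{\rm{\ref{item:b}}}^\delta = \gamma^{\vec{n}}_{\Gamma_D}E_D^\delta\Pi_D^\delta\gamma^{\vec{n}}_{\Gamma_D}=\Pi_D^\delta\gamma^{\vec{n}}_{\Gamma_D}$, so that for $u\in U^\delta$ and $\vec{v}\in H_{0,\Gamma_N}(\divv;\Omega)$,
$$
(G_1 u)\big((\identity-\Pi_{\rm{\ref{item:b}}}^\delta)\vec{v}\big)=\int_{\Gamma_D}\gamma_{\Gamma_D}(u)\,(\identity-\Pi_D^\delta)\gamma^{\vec{n}}_{\Gamma_D}(\vec{v})\,ds=0,
$$
because $\gamma_{\Gamma_D}(u)\in\ran\gamma_{\Gamma_D}|_{U^\delta}$ is $L_2(\Gamma_D)$-orthogonal to $\ran(\identity-\Pi_D^\delta)$. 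This is exactly the Fortin property \eqref{fortin}, and Theorem~\ref{thm:Fortin} yields the claim.

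The only genuine work is in the two imported ingredients. The boundary Fortin operator $\Pi_D^\delta$ is taken verbatim from \cite{204.19}, so the main obstacle — addressed in the proof of Lemma~\ref{lem:ext}, not here — is the construction of the Raviart--Thomas-valued right inverse $E_D^\delta$ of $\gamma^{\vec{n}}_{\Gamma_D}$ that is uniformly bounded $\widetilde{H}^{-\frac12}(\Gamma_D)\to H(\divv;\Omega)$ on a shape-regular mesh $\tria_D^\delta$ that may be strongly graded, coarsening away from $\Gamma_D$ while agreeing with $\tria^\delta$ on $\Gamma_D$. Given $\Pi_D^\delta$ and $E_D^\delta$, the proof of Theorem~\ref{thm: infsup1} is the short composition above.
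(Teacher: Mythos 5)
Your proof is essentially the paper's: you define the same Fortin interpolator $\Pi_{\rm{\ref{item:b}}}^\delta := E_D^\delta\,\Pi_D^\delta\,\gamma^{\vec{n}}_{\Gamma_D}$, verify its uniform boundedness as a composition of three uniformly bounded maps, use the commutation $\gamma^{\vec{n}}_{\Gamma_D}\Pi_{\rm{\ref{item:b}}}^\delta=\Pi_D^\delta\gamma^{\vec{n}}_{\Gamma_D}$ to transfer the $L_2(\Gamma_D)$-orthogonality of $\Pi_D^\delta$ into the Fortin property \eqref{fortin}, and conclude by Theorem~\ref{thm:Fortin}. Correct, and the same route.
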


\begin{remark}[data-oscillation] \label{rem:dataosc1} Because we are only interested in consistent data, the data-oscillation term to be estimated reads as $\|(\identity-{\Pi_{\rm{\ref{item:b}}}^\delta}')\gamma^{\vec{n}\,{\textstyle '}}_{\Gamma_D} h_D \|_{H_{0,\Gamma_N}(\divv;\Omega)'}$, where $h_D \in H^{\frac12}(\Gamma_D)$.
From $ \gamma^{\vec{n}}_{\Gamma_D} \Pi_{\rm{\ref{item:b}}}^\delta=\Pi_D^\delta  \gamma^{\vec{n}}_{\Gamma_D}$, we have 
\begin{align*}
\|(\identity-{\Pi_{\rm{\ref{item:b}}}^\delta}')\gamma^{\vec{n}\,{\textstyle '}}_{\Gamma_D} h_D\|_{H_{0,\Gamma_N}(\divv;\Omega)'}& =
\|\gamma^{\vec{n}\,{\textstyle '}}_{\Gamma_D} (\identity-{\Pi_D^\delta}') h_D\|_{H_{0,\Gamma_N}(\divv;\Omega)'}\\
&\leq \|(\identity-{\Pi_D^\delta}') h_D\|_{H^{\frac12}(\Gamma_D)}\\
&\leq \sqrt{\sum_{e \in \cF(\tria^\delta) \cap \Gamma_D} \diam(e)^{2q+3} |h_D|_{H^{q+2}(e)}^2},
\end{align*}
for all $h_D \in H^{\frac12}(\Gamma_D)$ for which the right-hand side is finite.
We conclude that the data-oscillation term is of order $q+\frac32$, which, as desired cf.~Remark~\ref{rem:2}, exceeds the order $q+1$ of best approximation of $U^\delta$ in $H^1(\Omega)$.
\end{remark}

\begin{proof}[Proof of Lemma~\ref{lem:ext}]
For $d \in \{2,3\}$, in \cite{70.991} a projector $Q_{\tria^\delta} \colon H_0(\divv;\Omega) \rightarrow \RT_{q+1}(\tria^\delta) \cap H_0(\divv;\Omega)$ has been constructed with the properties that
$\divv Q_{\tria^\delta}=\Pi_{\tria^\delta} \divv$, where $\Pi_{\tria^\delta}$ is the $L_2(\Omega)$-orthogonal projector onto $\cS^{-1}_{q+1}(\tria^\delta)$, and
\be \label{eq:7}
\begin{split}
&\|(\identity - Q_{\tria^\delta})\vec{v}\|_{L_2(K)}^2 \lesssim\\
& \sum_{\{K' \in \tria^\delta\colon K \cap K' \neq \emptyset\}} \hspace*{-1em}\min_{\hspace*{1em}\vec{q}_{K'} \in \RT_{q+1}(K')\hspace*{-1em}}\|\vec{v}-\vec{q}_{K'}\|_{L_2(K')}^2+\big(\tfrac{\diam(K')}{q+2}\big)^2\|(\identity - \Pi_{\tria^\delta})\divv \vec{v}\|_{L_2(K')}^2 \hspace*{-1em}
    \end{split}
\ee
($K \in \tria^\delta$, $\vec{v} \in H_0(\divv;\Omega)$), only dependent on $q$ and the shape regularity of $\tria^\delta$. 

Given $v \in \cS^{-1}_{q+1}(\cF(\tria^\delta) \cap \Gamma_D)$, let $\vec{w}^\delta \in Y_{\rm{\ref{item:b}}}^\delta$, $\vec{w} \in H_{0,\Gamma_N}(\divv;\Omega)$ be such that
$$
\gamma^{\vec{n}}_{\Gamma_D}(\vec{w}^\delta)=v=\gamma^{\vec{n}}_{\Gamma_D}(\vec{w}),
$$
where
$$
\|\vec{w}\|_{H(\divv;\Omega)} \lesssim \|v\|_{\widetilde{H}^{-\frac12}(\Gamma_D)}.
$$
Obviously $v \mapsto \vec{w}^\delta$ can be taken to be a linear map, and so can $v \mapsto \vec{w}$. For example, one may take $\vec{w}=\nabla z$ where $z$ solves $-\triangle z+z=0$ on $\Omega$, $\partial_{\vec{n}} z=v$ on $\Gamma_D$, $\partial_{\vec{n}} z=0$ on $\partial\Omega\setminus \Gamma_D$.

We define
$$
E_D^\delta v:=Q_{\tria_D^\delta} (\vec{w}-\vec{w}^\delta)+\vec{w}^\delta \in Y_{\rm{\ref{item:b}}}^\delta.
$$
It satisfies
$$
\gamma^{\vec{n}}_{\Gamma_D} (E_D^\delta v)=\gamma^{\vec{n}}_{\Gamma_D}(\vec{w}^\delta)=v,
$$
$\divv E_D^\delta v=\Pi_{\tria_D^\delta} \divv(\vec{w}-\vec{w}^\delta)+\divv \vec{w}^\delta=\Pi_{\tria_D^\delta} \divv \vec{w}$, and so
$$
\|\divv E_D^\delta v\|_{L_2(\Omega)} \leq \|\divv \vec{w}\|_{L_2(\Omega)} \lesssim  \|v\|_{\widetilde{H}^{-\frac12}(\Gamma_D)}.
$$
The proof is completed by
\begin{align*}
\|E_D^\delta v\|_{L_2(\Omega)^d} & \leq \|\vec{w}\|_{L_2(\Omega)^d}+ \|E_D^\delta v-\vec{w}\|_{L_2(\Omega)^d} \\
&= \|\vec{w}\|_{L_2(\Omega)^d}+\|(\identity -Q_{\tria_D^\delta})(\vec{w}^\delta-\vec{w})\|_{L_2(\Omega)^d} \\
&\lesssim 
\|\vec{w}\|_{L_2(\Omega)^d}+\|(\identity-\Pi_{\tria_D^\delta})\divv(\vec{w}^\delta-\vec{w})\|_{L_2(\Omega)}\\
&\hspace*{10em}+\sqrt{\sum_{K \in \tria^\delta} \min_{\vec{q}_{K} \in \RT_{q+1}(K)} \|\vec{w}^\delta-\vec{w}-\vec{q}_{K}\|_{L_2(K)}^2}\\
&=\|\vec{w}\|_{L_2(\Omega)^d}+\|(\identity-\Pi_{\tria_D^\delta})\divv \vec{w}\|_{L_2(\Omega)}\\
&\hspace*{10em}+\sqrt{\sum_{K \in \tria^\delta} \min_{\vec{q}_{K} \in \RT_{q+1}(K)} \|\vec{w}-\vec{q}_{K}\|_{L_2(K)}^2}\\
&\lesssim  \|\vec{w}\|_{H(\divv;\Omega)} \lesssim  \|v\|_{\widetilde{H}^{-\frac12}(\Gamma_D)}. \qedhere
\end{align*}
\end{proof}

Let $(\tria^\delta)_{\delta \in \Delta}$ be the family of all conforming partitions of $\Omega$ that can be constructed by Newest Vertex Bisection starting from an initial partition $\tria_0$ such that $\Gamma_D$ is the union of some $e \in \cF(\tria_0)$. Then a frugal way to construct $\tria_D^\delta$ is by a sequence of consecutive MARK and REFINE steps starting from $\tria^0$, where in each iteration only those simplices are marked for refinement that have an edge on $\Gamma_D$ that is not in $\cF(\tria^\delta) \cap \Gamma_D$.
The total number of simplices that will be marked is bounded by an absolute multiple of $\cF(\tria^\delta) \cap \Gamma_D$.
Consequently, an application of \cite[Thm.~2.4]{21} for $d=2$, or its extension \cite[Thm.~6.1]{249.87} for $d \geq 2$, shows that $\# \tria_D^\delta -\#\tria_0 \lesssim \# (\cF(\tria^\delta) \cap \Gamma_D)$. 
That is, the number of elements in the domain mesh $\tria_D^\delta$ (minus the number of elements in $\tria_0$) can be bounded by a constant multiple of the number of faces of elements in $\tria^\delta$ that are on $\Gamma_D$.

\subsection{Verification of inf-sup condition \ref{item:c}} \label{sec:c} 
Let $\Omega$ and $\tria^\delta$ be as in Sect.~\ref{sec:b}.
Let
\be \label{eq:21}
P^\delta :=\RT_q(\tria^\delta),
\ee
so that $\ran\gamma_{\Gamma_N}^{\vec{n}}|_{P^\delta} \subset  \cS^{-1}_q(\cF(\tria^\delta) \cap \Gamma_N)$.

In  \cite[Sect.~4.1]{204.19} a uniformly bounded projector 
$\Pi_N^\delta \in \cL(H_{00}^{\frac12} (\Gamma_N),H_{00}^{\frac12} (\Gamma_N))$ has been constructed (there denoted by $\Pi_1^\delta$), with
 $\ran \Pi_N^\delta \subset \cS^{0}_{d+q}(\cF(\tria^\delta) \cap \Gamma_N) \cap H^1_0(\Gamma_N)$, 
 $ \cS^{-1}_q(
\cF(\tria^\delta) \cap \Gamma_N) \perp_{L_2(\Gamma_N)} \ran (\identity-\Pi_N^\delta)$, and 
$$
\|(\identity-{\Pi_N^\delta}') h_N\|_{H^{-\frac12}(\Gamma_N)} \lesssim \sqrt{\sum_{e \in \cF(\tria^\delta) \cap \Gamma_N} \diam(e)^{2q+3} |h_N|_{H^{q+1}(e)}^2},
$$
for all $h_N \in H^{-\frac12}(\Gamma_N)$ for which the right-hand side is finite.

\begin{lemma} \label{lem:ext2} For \emph{any} (uniformly) shape regular partition $\tria_N^\delta$ of $\Omega$ into (closed) $d$-simplices such that
$$
\cF(\tria_N^\delta) \cap \Gamma_N=   \cF(\tria^\delta) \cap \Gamma_N,
$$
there exists a linear
\be \label{eq:22}
E_N^\delta\colon  \cS^{0}_{d+q}(\cF(\tria^\delta) \cap \Gamma_N) \cap H^1_0(\Gamma_N) \rightarrow Y_{\rm{\ref{item:c}}}^\delta:=\cS^0_{d+q}(\tria^\delta_N) \cap H^1_{0,\Gamma_D}(\Omega)
\ee
with $\gamma_{\Gamma_N} E_N^\delta=\identity$ and $\sup_{\delta \in \Delta} \sup_{0 \neq v \in  \cS^{0}_{d+q}(\cF(\tria^\delta) \cap \Gamma_N) \cap H^1_0(\gamma_N)}\frac{\|E_N^\delta v\|_{H^1(\Omega)}}{\|v\|_{H^{\frac12}_{00}(\Gamma_N)}}<\infty$.
\end{lemma}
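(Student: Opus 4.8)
The plan is to follow the same blueprint as the proof of Lemma~\ref{lem:ext}, with the commuting Raviart--Thomas projector $Q_{\tria^\delta}$ replaced by a Scott--Zhang type quasi-interpolation operator adapted to the partition $\tria_N^\delta$, the split $\partial\Omega=\Gamma_D\cup\Gamma_N$, and the polynomial degree $d+q$. First, using the norm equivalence $\|\bigcdot\|_{H^{\frac12}_{00}(\Gamma_N)}\eqsim\inf\{\|w\|_{H^1(\Omega)}\colon w\in H^1_{0,\Gamma_D}(\Omega),\ \gamma_{\Gamma_N}(w)=\bigcdot\}$ recalled at the beginning of this section, I would fix a bounded \emph{linear} right-inverse $v\mapsto\widetilde v$ of $\gamma_{\Gamma_N}|_{H^1_{0,\Gamma_D}(\Omega)}$, so $\widetilde v\in H^1_{0,\Gamma_D}(\Omega)$, $\gamma_{\Gamma_N}(\widetilde v)=v$, and $\|\widetilde v\|_{H^1(\Omega)}\lesssim\|v\|_{H^{\frac12}_{00}(\Gamma_N)}$ --- e.g.\ $\widetilde v=z$ with $-\triangle z+z=0$ in $\Omega$, $z=v$ on $\Gamma_N$, $z=0$ on $\Gamma_D$, which is well posed because the zero-extension of $v$ to $\partial\Omega$ belongs to $H^{\frac12}(\partial\Omega)$ by definition of $H^{\frac12}_{00}(\Gamma_N)$. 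Note that for every facet $\sigma\in\cF(\tria_N^\delta)\cap\Gamma_N=\cF(\tria^\delta)\cap\Gamma_N$ the trace $\widetilde v|_\sigma=v|_\sigma$ is a polynomial of degree $\le d+q$.

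The heart of the argument is the construction, on the mesh $\tria_N^\delta$, of a quasi-interpolator $Q^\delta\in\cL(H^1(\Omega),\cS^0_{d+q}(\tria_N^\delta))$ with the following properties, all with constants depending only on $d$, $q$, and the shape-regularity of $\tria_N^\delta$: (i) $\|Q^\delta z\|_{H^1(\Omega)}\lesssim\|z\|_{H^1(\Omega)}$; (ii) $Q^\delta$ fixes $\cS^0_{d+q}(\tria_N^\delta)$ pointwise; (iii) $\gamma_{\Gamma_D}(z)=0\Rightarrow\gamma_{\Gamma_D}(Q^\delta z)=0$; and (iv) if $\gamma_{\Gamma_N}(z)$ is a continuous piecewise polynomial of degree $\le d+q$ on $\cF(\tria_N^\delta)\cap\Gamma_N$, then $\gamma_{\Gamma_N}(Q^\delta z)=\gamma_{\Gamma_N}(z)$. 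This is the standard Scott--Zhang construction, where for each Lagrange node $a$ of degree $d+q$ one fixes the averaging facet $\sigma_a$: take $\sigma_a\subset\Gamma_N$ a facet of $\tria_N^\delta$ when $a\in\Gamma_N\setminus\Gamma_D$; take $\sigma_a\subset\Gamma_D$ when $a\in\Gamma_D$ (in particular on the interface $\Gamma_D\cap\Gamma_N$); and $\sigma_a$ arbitrary otherwise. Properties (i)--(ii) follow from the usual scaling and shape-regularity estimates (which do not see the strong grading of $\tria_N^\delta$ away from $\Gamma_N$); property (iii) holds because then $(Q^\delta z)(a)=0$ at every node $a$ on $\Gamma_D$; and property (iv) holds because at an interior node $a\in\Gamma_N\setminus\Gamma_D$ the Scott--Zhang functional reads only $z|_{\sigma_a}$ and reproduces it exactly when it is a polynomial of degree $\le d+q$, while at an interface node $a\in\Gamma_D\cap\Gamma_N$ one gets $(Q^\delta z)(a)=0$, which matches $\gamma_{\Gamma_N}(z)(a)=0$ since admissible Neumann data in $H^1_0(\Gamma_N)$ vanish on $\Gamma_D\cap\Gamma_N$.

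Finally I would set $E_N^\delta v:=Q^\delta\widetilde v$. This is linear in $v$; it lies in $\cS^0_{d+q}(\tria_N^\delta)$ and, by (iii) and $\gamma_{\Gamma_D}(\widetilde v)=0$, in $H^1_{0,\Gamma_D}(\Omega)$, hence in $Y_{\rm{\ref{item:c}}}^\delta$; by (iv), applied with $z=\widetilde v$ whose Neumann trace is the piecewise polynomial $v$ of degree $\le d+q$ on $\cF(\tria^\delta)\cap\Gamma_N=\cF(\tria_N^\delta)\cap\Gamma_N$, one gets $\gamma_{\Gamma_N}(E_N^\delta v)=\gamma_{\Gamma_N}(\widetilde v)=v$, i.e.\ $\gamma_{\Gamma_N}E_N^\delta=\identity$; and by (i) and the choice of $\widetilde v$, $\|E_N^\delta v\|_{H^1(\Omega)}\lesssim\|\widetilde v\|_{H^1(\Omega)}\lesssim\|v\|_{H^{\frac12}_{00}(\Gamma_N)}$, with a constant uniform in $\delta$ since $\sup_{\delta}$ of the shape-regularity constant of $\tria_N^\delta$ is finite. (A variant structurally identical to Lemma~\ref{lem:ext} is to also fix a linear but $H^1$-unbounded finite element extension $v^\delta\in Y_{\rm{\ref{item:c}}}^\delta$ of $v$ --- the Lagrange function matching $v$ at the nodes on $\Gamma_N$ and vanishing at all remaining nodes --- and set $E_N^\delta v:=Q^\delta(\widetilde v-v^\delta)+v^\delta$; property (ii) collapses this back to $Q^\delta\widetilde v$.) As in Sect.~\ref{sec:b}, $E_N^\delta$ can then be combined with $\Pi_N^\delta$ into a Fortin interpolator $\Pi_{\rm{\ref{item:c}}}^\delta:=E_N^\delta\Pi_N^\delta\gamma_{\Gamma_N}$ for condition~\ref{item:c}.

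The main obstacle is the construction in the middle paragraph: assembling a \emph{single} interpolation operator that is simultaneously uniformly $H^1$-stable on the strongly graded meshes $\tria_N^\delta$, preserves the essential condition on $\Gamma_D$, and reproduces polynomial Neumann data on $\Gamma_N$. The genuinely delicate point is the bookkeeping for nodes on the interface $\Gamma_D\cap\Gamma_N$: they must be assigned a facet inside $\Gamma_D$ so as not to destroy property (iii), and one has to verify --- as above --- that this does not conflict with property (iv), which works precisely because the admissible data space $\cS^0_{d+q}(\cF(\tria^\delta)\cap\Gamma_N)\cap H^1_0(\Gamma_N)$ consists of functions vanishing on that interface.
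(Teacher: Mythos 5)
Your proposal is correct and follows essentially the same route as the paper: extend $v$ to $\widetilde v\in H^1_{0,\Gamma_D}(\Omega)$ by solving an elliptic problem with the prescribed trace, then apply the Scott--Zhang quasi-interpolant (with boundary-aware node functionals) on $\tria_N^\delta$ to land in $Y_{\rm{\ref{item:c}}}^\delta$ while preserving the piecewise-polynomial trace on $\Gamma_N$ and the homogeneity on $\Gamma_D$. The paper simply cites the standard Scott--Zhang operator from \cite{247.2} for the boundary-preservation and uniform $H^1$-stability properties that you spell out explicitly; the only cosmetic difference is your use of $-\triangle z+z=0$ in place of the paper's $-\triangle z=0$, which changes nothing.
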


\begin{proof} Given $v \in  \cS^{0}_{d+q}(\cF(\tria^\delta) \cap \Gamma_N) \cap H^1_0(\Gamma_N)$, let $z \in H^1_{0,\Gamma_D}(\Omega)$ solve $-\triangle z =0$ on $\Omega$, $z=v$ on $\Gamma_N$. Then $\|z\|_{H^1(\Omega)} \lesssim \|v\|_{H_{00}^{\frac12}(\Gamma_N)}$.
Now let $E_N^\delta v$ be the usual Scott-Zhang quasi-interpolant from \cite{247.2} of $z$ in $\cS^0_{d+q}(\tria^\delta_N)$, which interpolator is uniformly bounded in $H^1(\Omega)$ and preserves boundary data in $\cS^0_{d+q}(\cF(\tria^\delta_N) \cap \partial\Omega)$.
\end{proof}

The operator 
$$
\Pi_{\rm{\ref{item:c}}}^\delta:=E_N^\delta \Pi_N^\delta \gamma_{\Gamma_N} \colon H^1_{0,\Gamma_D}(\Omega) \rightarrow Y_{\rm{\ref{item:c}}}^\delta \subset H^1_{0,\Gamma_D}(\Omega)
$$
 is uniformly bounded, and $\ran \gamma_{\Gamma_N}^{\vec{n}}|_{P^\delta} \perp_{L_2(\Gamma_N)} \ran \gamma_{\Gamma_N}(\identity-\Pi_{\rm{\ref{item:c}}}^\delta)$
as a consequence of $ \gamma_{\Gamma_N} \Pi_{\rm{\ref{item:c}}}^\delta=\Pi_N^\delta  \gamma_{\Gamma_N}$.
From Theorem~\ref{thm:Fortin} we conclude the following result.
\begin{theorem} \label{thm: infsup2} With $P^\delta$, $Y_{\rm{\ref{item:c}}}^\delta$ defined in \eqref{eq:21}-\eqref{eq:22}, the uniform inf-sup condition \ref{item:c} holds true.
\end{theorem}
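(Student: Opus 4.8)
The plan is to derive \ref{item:c} from the Fortin criterion, Theorem~\ref{thm:Fortin}, applied with the first operator $G_1\colon \vec{p}\mapsto \gamma_{\Gamma_N}'\gamma_{\Gamma_N}^{\vec{n}}(\vec{p})$, whose action on a test function $v\in Y_1=H^1_{0,\Gamma_D}(\Omega)$ is $(G_1\vec{p})(v)=\int_{\Gamma_N}\gamma_{\Gamma_N}^{\vec{n}}(\vec{p})\,\gamma_{\Gamma_N}(v)\,ds$, with trial space $X^\delta=P^\delta=\RT_q(\tria^\delta)$ and candidate Fortin interpolator $\Pi_{\rm{\ref{item:c}}}^\delta:=E_N^\delta\Pi_N^\delta\gamma_{\Gamma_N}$. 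First I would check that $\Pi_{\rm{\ref{item:c}}}^\delta$ is a legitimate element of $\cL(H^1_{0,\Gamma_D}(\Omega),Y_{\rm{\ref{item:c}}}^\delta)$: the composition is well-defined because $\ran\Pi_N^\delta\subset\cS^{0}_{d+q}(\cF(\tria^\delta)\cap\Gamma_N)\cap H^1_0(\Gamma_N)$, which is exactly the domain of $E_N^\delta$ in \eqref{eq:22}, and its range lands in $Y_{\rm{\ref{item:c}}}^\delta=\cS^0_{d+q}(\tria^\delta_N)\cap H^1_{0,\Gamma_D}(\Omega)$ by Lemma~\ref{lem:ext2}. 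Uniform boundedness follows by composing the boundedness of $\gamma_{\Gamma_N}\colon H^1(\Omega)\to H^{\frac12}_{00}(\Gamma_N)$, the uniform $H^{\frac12}_{00}(\Gamma_N)$-boundedness of $\Pi_N^\delta$ recalled from \cite[Sect.~4.1]{204.19}, and the uniform bound on $E_N^\delta$ from Lemma~\ref{lem:ext2}.

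The only remaining point is the annihilation identity $(G_1X^\delta)\big((\identity-\Pi_{\rm{\ref{item:c}}}^\delta)Y_1\big)=0$. Here the key step is the commutation
\[
\gamma_{\Gamma_N}\Pi_{\rm{\ref{item:c}}}^\delta=\gamma_{\Gamma_N}E_N^\delta\Pi_N^\delta\gamma_{\Gamma_N}=\Pi_N^\delta\gamma_{\Gamma_N},
\]
which uses $\gamma_{\Gamma_N}E_N^\delta=\identity$ from Lemma~\ref{lem:ext2}. Then for $\vec{p}\in P^\delta$ and $v\in H^1_{0,\Gamma_D}(\Omega)$ one has $(G_1\vec{p})\big((\identity-\Pi_{\rm{\ref{item:c}}}^\delta)v\big)=\int_{\Gamma_N}\gamma_{\Gamma_N}^{\vec{n}}(\vec{p})\,(\identity-\Pi_N^\delta)\gamma_{\Gamma_N}(v)\,ds=0$, because $\gamma_{\Gamma_N}^{\vec{n}}(\vec{p})\in\cS^{-1}_q(\cF(\tria^\delta)\cap\Gamma_N)$ and $\cS^{-1}_q(\cF(\tria^\delta)\cap\Gamma_N)\perp_{L_2(\Gamma_N)}\ran(\identity-\Pi_N^\delta)$ by the properties of $\Pi_N^\delta$. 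Theorem~\ref{thm:Fortin} then gives $\gamma^\delta\geq\|\Pi_{\rm{\ref{item:c}}}^\delta\|_{\cL(H^1_{0,\Gamma_D}(\Omega),H^1_{0,\Gamma_D}(\Omega))}^{-1}$, which is bounded away from zero uniformly in $\delta$; since this quotient is precisely the inf-sup quantity appearing in \ref{item:c}, the theorem follows.

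I expect the main obstacle to be essentially none of analytic nature: the genuine work is already packaged in the construction of $\Pi_N^\delta$ (imported from \cite{204.19}) and in the Scott--Zhang-based Lemma~\ref{lem:ext2}. What requires care is the bookkeeping that makes the composition $E_N^\delta\Pi_N^\delta\gamma_{\Gamma_N}$ well-defined — matching the polynomial degree $d+q$ and the boundary space $\cS^{0}_{d+q}(\cF(\tria^\delta)\cap\Gamma_N)\cap H^1_0(\Gamma_N)$ between the range of $\Pi_N^\delta$ and the domain of $E_N^\delta$, and ensuring $\ran\gamma_{\Gamma_N}^{\vec{n}}|_{P^\delta}\subset\cS^{-1}_q(\cF(\tria^\delta)\cap\Gamma_N)$ so that the $L_2(\Gamma_N)$-orthogonality can be invoked. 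Finally, as in Remark~\ref{rem:dataosc1}, the same commutation $\gamma_{\Gamma_N}\Pi_{\rm{\ref{item:c}}}^\delta=\Pi_N^\delta\gamma_{\Gamma_N}$ transfers the known bound on $\|(\identity-{\Pi_N^\delta}')h_N\|_{H^{-\frac12}(\Gamma_N)}$ to $\|(\identity-{\Pi_{\rm{\ref{item:c}}}^\delta}')\gamma_{\Gamma_N}'h_N\|_{H^1_{0,\Gamma_D}(\Omega)'}$, yielding data oscillation of order $q+\tfrac32$, which exceeds the best-approximation order of $P^\delta$ in $H(\divv;\Omega)$.
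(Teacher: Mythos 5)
Your proof is correct and mirrors the paper's argument line for line: defining the Fortin interpolator $\Pi_{\rm{\ref{item:c}}}^\delta:=E_N^\delta\Pi_N^\delta\gamma_{\Gamma_N}$, deducing uniform boundedness by composition, establishing the annihilation identity via the commutation $\gamma_{\Gamma_N}\Pi_{\rm{\ref{item:c}}}^\delta=\Pi_N^\delta\gamma_{\Gamma_N}$ together with the $L_2(\Gamma_N)$-orthogonality built into $\Pi_N^\delta$, and concluding via Theorem~\ref{thm:Fortin}. One small imprecision to fix: you invoke boundedness of $\gamma_{\Gamma_N}\colon H^1(\Omega)\to H^{\frac12}_{00}(\Gamma_N)$, but the trace lands in $H^{\frac12}_{00}(\Gamma_N)$ only for $v\in H^1_{0,\Gamma_D}(\Omega)$; since that is exactly the test space $Y_1$ on which $\Pi_{\rm{\ref{item:c}}}^\delta$ acts, the argument is unaffected, but the correct statement is $\gamma_{\Gamma_N}\in\cL(H^1_{0,\Gamma_D}(\Omega),H^{\frac12}_{00}(\Gamma_N))$.
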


\begin{remark}[data-oscillation] \label{rem:dataosc2}
The data-oscillation term to be estimated reads as $\|(\identity-{\Pi_{\rm{\ref{item:c}}}^\delta}')\gamma_{\Gamma_N}' h_N \|_{H^1_{0,\Gamma_D}(\Omega)'}$ for $h_N \in H^{-\frac12}(\Gamma_N)$.
From $ \gamma_{\Gamma_N} \Pi_{\rm{\ref{item:c}}}^\delta=\Pi_N^\delta  \gamma_{\Gamma_N}$, we have 
\begin{align*}
\|(\identity-{\Pi_{\rm{\ref{item:c}}}^\delta}')\gamma_{\Gamma_N}' h_D\|_{{H^1_{0,\Gamma_D}(\Omega)}'}& =
\|\gamma_{\Gamma_N}' (\identity-{\Pi_N^\delta}') h_N\|_{{H^1_{0,\Gamma_D}(\Omega)}'}\\
&\leq \|(\identity-{\Pi_N^\delta}') h_N\|_{H^{-\frac12}(\Gamma_N)}\\
&\leq \sqrt{\sum_{e \in \cF(\tria^\delta) \cap \Gamma_N} \diam(e)^{2q+3} |h_N|_{H^{q+1}(e)}^2},
\end{align*}
for all $h_N \in H^{-\frac12}(\Gamma_D)$ for which the right-hand side is finite.
We conclude that the data-oscillation term is of order $q+\frac32$, which exceeds the order $q+1$ of best approximation of $P^\delta=\RT_{q}(\tria^\delta)$ in $H(\divv;\Omega)$.
\end{remark}

Finally, as we have seen in Sect.~\ref{sec:b}, $\tria_N^\delta$ can be constructed with $\# \tria_N^\delta -\#\tria_0 \lesssim \# (\cF(\tria^\delta) \cap \Gamma_N)$.

\subsection{Verification of inf-sup condition \ref{item:d}}  \label{sec:d} Let $\Omega$, $\tria^\delta$, and $U^\delta=\cS^0_{q+1}(\tria^\delta)$ be as in Sect.~\ref{sec:b}. Take $P^\delta:=\cS^{-1}_q(\tria^\delta)^d$, and assume that $\ran B|_{U^\delta} \subset \cS_{q}^{-1}(\tria^\delta)$. Writing
$$
\int_\Omega \vec{p}\cdot\nabla v+Bu v\,dx=\sum_{K \in \tria^\delta} \int_K (Bu-\divv \vec{p})v\,dx+\int_{\partial K} v \vec{p}\cdot\vec{n}\,ds,
$$
the arguments used for the construction in $\Pi_1^\delta$ in  \cite[Sect.~4.1]{204.19}  show that \ref{item:d} is satisfied for $Y_{\rm{\ref{item:d}}}^\delta := \cS_{q+d+1}^0(\tria^\delta) \cap H^1_{0,\Gamma_D}(\Omega)$. 

As follows from \cite[Rem.~4.2]{204.19}, for $g \in H^1_{0,\Gamma_d}(\Omega)'$ such that for all $K \in \tria^\delta$, $g|_K \in H^{q+1}(K)$, with this $Y_{\rm{\ref{item:d}}}^\delta$ the data-oscillation term is order $q+2$, which exceeds the order $q+1$ of best approximation of $P^\delta \times U^\delta$
 in $L_2(\Omega)^d \times H^1(\Omega)$.

\section{Another application: Stokes equations} \label{sec:5}
Our approach to append inhomogeneous essential boundary conditions to a Least Squares functional is not restricted to elliptic problems of second order.
As an example we consider the Stokes equations.
On a bounded Lipschitz domain $\Omega \subset \R^d$, where $d =3$,\footnote{By reading $\curl$ as the scalar-valued operator $\vec{\omega} \mapsto \partial_1 \omega_2-\partial_2 \omega_1$, the results also hold for $d=2$.} we seek $(\vec{u},p)$ that, for given data $\vec{f}$, $g$, and $\vec{h}$, and  viscosity $\nu >0$, satisfy
\be \label{eq:Stokesequations}
 \left\{
\begin{array}{r@{}c@{}ll}
-\nu \triangle \vec{u}+ \nabla p &\,\,=\,\,& \vec{f} &\text{ on } \Omega,\\
\divv \vec{u} &\,\,=\,\,& g &\text{ on } \Omega,\\
\vec{u} &\,\,=\,\,& \vec{h} &\text{ on } \partial\Omega.
\end{array}
\right.
\ee

We consider its first order velocity-pressure-vorticity formulation.

\begin{proposition} \label{prop:stokes} With $\gamma\colon H^1(\Omega)^d \rightarrow H^{\frac12}(\Omega)^d\colon \vec{w}\rightarrow \vec{w}|_{\partial\Omega}$, let
$$
G(\vec{u},p,\vec{\omega}):=\big(\vec{v}\mapsto \int_\Omega \vec{\omega} \cdot \curl \vec{v}-\tfrac{p}{\nu}\divv \vec{v}\,dx, \divv \vec{u},\vec{\omega}-\curl \vec{u},  \gamma(\vec{u})\big).
$$
Then
$$
G \!\in\! \Lis\big(H^1(\Omega)^d\!\times\! L_{2,0}(\Omega) \!\times\! L_2(\Omega)^{2d-3}, H^{-1}(\Omega)^d \!\times\! L_{2,0}(\Omega) \!\times\! L_2(\Omega)^{2d-3} \!\times\! H^{\frac12}(\partial\Omega)^d \big),
$$
and
$$
G(\vec{u},p,\vec{\omega}) = \big(\vec{v}\mapsto\int_\Omega \tfrac{\vec{f} \cdot \vec{v}}{\nu} -g\divv \vec{v}\,dx, g,0, \vec{h}\big) \\
$$
is a consistent formulation of \eqref{eq:Stokesequations}.
In particular,
$$
\|\vec{u}\|_{H^1(\Omega)}^2\!+\!\tfrac{1}{\nu^2}\|p\|_{L_{2,0}(\Omega)}^2\!+\!\|\vec{\omega}\|_{L_2(\Omega)}^2\!\eqsim \!
\|G(\vec{u},p,\vec{\omega})\|^2_{H^{-1}(\Omega)^d \times L_{2,0}(\Omega) \times L_2(\Omega)^{2d-3} \times H^{\frac12}(\partial\Omega)^d}
$$
uniformly in $\nu>0$. 
\end{proposition}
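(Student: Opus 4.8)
The plan is to verify consistency first --- a short vector-calculus computation --- and then to prove bounded invertibility by using Lemma~\ref{lem:bi} to strip off the essential boundary condition $\gamma(\vec u)=\vec h$, after which only a classical Stokes saddle-point problem remains. For consistency, let $(\vec u,p)$ be the weak solution of \eqref{eq:Stokesequations} and set $\vec\omega:=\curl\vec u$; then the second, third and fourth components of $G(\vec u,p,\vec\omega)$ are $\divv\vec u=g$, $\vec\omega-\curl\vec u=0$, and $\gamma(\vec u)=\vec h$ by construction. For the first component, one starts from the weak momentum equation $\nu\int_\Omega\nabla\vec u:\nabla\vec v\,dx-\int_\Omega p\,\divv\vec v\,dx=\int_\Omega\vec f\cdot\vec v\,dx$ ($\vec v\in H^1_0(\Omega)^d$) and uses the integration-by-parts identity $\int_\Omega\nabla\vec u:\nabla\vec v\,dx=\int_\Omega\curl\vec u\cdot\curl\vec v+\divv\vec u\,\divv\vec v\,dx$, valid whenever $\vec v\in H^1_0(\Omega)^d$; inserting $\vec\omega=\curl\vec u$, $\divv\vec u=g$ and dividing by $\nu$ gives exactly $\int_\Omega\vec\omega\cdot\curl\vec v-\tfrac p\nu\divv\vec v\,dx=\int_\Omega\tfrac{\vec f\cdot\vec v}\nu-g\,\divv\vec v\,dx$. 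Boundedness of $G$ between the stated spaces is immediate from Cauchy--Schwarz together with $\|\curl\vec v\|_{L_2(\Omega)}+\|\divv\vec v\|_{L_2(\Omega)}\lesssim\|\vec v\|_{H^1(\Omega)}$.

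To prove $G$ is boundedly invertible, write $G=(H,T)$ with $T(\vec u,p,\vec\omega):=\gamma(\vec u)$ and $H$ the triple of the first three components. Then $T$ is surjective onto the boundary-data factor (classical trace theorem, $T$ being independent of $p$ and $\vec\omega$), with kernel $Z_0=H^1_0(\Omega)^d\times L_{2,0}(\Omega)\times L_2(\Omega)^{2d-3}$, on which $\divv\vec u$ does lie in $L_{2,0}(\Omega)$; so by Lemma~\ref{lem:bi} it suffices to show $H|_{Z_0}\in\Lis\big(Z_0,\,H^{-1}(\Omega)^d\times L_{2,0}(\Omega)\times L_2(\Omega)^{2d-3}\big)$, and that the resulting norm equivalence is uniform in $\nu$. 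The substitution $\hat p:=p/\nu$ eliminates $\nu$ from both $H$ and the norm (as $\tfrac1{\nu^2}\|p\|_{L_{2,0}(\Omega)}^2=\|\hat p\|_{L_{2,0}(\Omega)}^2$), so we may take $\nu=1$.

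On $Z_0$ with $\nu=1$, given data $(F,\psi,\vec\chi)$, the third equation forces $\vec\omega=\curl\vec u+\vec\chi$; inserting this into the first equation leaves the mixed problem of finding $(\vec u,\hat p)\in H^1_0(\Omega)^d\times L_{2,0}(\Omega)$ with
\begin{align*}
\int_\Omega\curl\vec u\cdot\curl\vec v\,dx-\int_\Omega\hat p\,\divv\vec v\,dx&=F(\vec v)-\int_\Omega\vec\chi\cdot\curl\vec v\,dx\quad(\vec v\in H^1_0(\Omega)^d),\\
\int_\Omega q\,\divv\vec u\,dx&=\int_\Omega q\,\psi\,dx\quad(q\in L_{2,0}(\Omega)).
\end{align*}
This fits Brezzi's theorem for saddle-point problems: the constraint form $(\vec v,q)\mapsto\int_\Omega q\,\divv\vec v\,dx$ satisfies the classical Stokes inf--sup (LBB) condition because $\divv\colon H^1_0(\Omega)^d\to L_{2,0}(\Omega)$ is onto with a bounded right inverse, while on $\{\vec v\in H^1_0(\Omega)^d\colon\divv\vec v=0\}$ the form $(\vec u,\vec v)\mapsto\int_\Omega\curl\vec u\cdot\curl\vec v\,dx$ is coercive, since for divergence-free $\vec v\in H^1_0(\Omega)^d$ the identity above reads $\int_\Omega|\curl\vec v|^2\,dx=|\vec v|_{H^1(\Omega)}^2\gtrsim\|\vec v\|_{H^1(\Omega)}^2$ by Poincaré's inequality. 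Hence $(\vec u,\hat p)$ exists, is unique, and is bounded by the data, whence $\vec\omega=\curl\vec u+\vec\chi$ is too; this gives $H|_{Z_0}\in\Lis$, Lemma~\ref{lem:bi} upgrades it to $G\in\Lis$ on the trial space, and the displayed norm equivalence --- uniform in $\nu$ --- follows. Finally, since the classically well-posed Stokes problem supplies, via the consistency computation, a solution of the operator equation $G(\vec u,p,\vec\omega)=(\dots)$, the just-proven uniqueness shows it is the only one, so the formulation is consistent.

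The main obstacle is the coercivity step: the identity $\int_\Omega|\curl\vec v|^2\,dx=\int_\Omega|\nabla\vec v|^2\,dx$ for divergence-free fields genuinely needs the homogeneous trace --- on all of $H^1(\Omega)^d$ there is no Friedrichs-type bound $\|\vec v\|_{H^1}\lesssim\|\curl\vec v\|_{L_2}+\|\divv\vec v\|_{L_2}$ --- so the reduction through Lemma~\ref{lem:bi} is structurally essential rather than a mere convenience. The remaining points (vanishing of the boundary terms in the integrations by parts, exact absorption of $\nu$ by the weight $\tfrac1{\nu^2}\|p\|_{L_{2,0}(\Omega)}^2$, and the net-flux compatibility implicitly pairing the $L_{2,0}(\Omega)$ and $H^{\frac12}(\partial\Omega)^d$ factors of the codomain) are routine once this structure is in place.
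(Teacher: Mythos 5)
Your proof is correct, but it reaches the key invertibility on $Z_0$ by a genuinely different route than the paper. The paper's Appendix argument never invokes Brezzi's saddle-point theorem directly: it starts from the \emph{known} well-posedness of the standard weak Stokes form \eqref{eq:8} (which includes the extra term $\int_\Omega\divv\undertilde{\vec{u}}\,\divv\vec{v}\,dx$), lifts this to the vorticity system $H$ of \eqref{eq:9} by eliminating $\undertilde{\vec{\omega}}$, and then passes to the operator $K$ of \eqref{eq:10} (the one appearing in $G$, \emph{without} the $\divv\,\divv$ term) via a norm-equivalence trick: since $\|\vec{v}\mapsto\int_\Omega\divv\undertilde{\vec{u}}\,\divv\vec{v}\|_{H^{-1}(\Omega)^d}\lesssim\|\divv\undertilde{\vec{u}}\|_{L_{2,0}(\Omega)}$, the triangle inequality gives $\|H\bigcdot\|_V\eqsim\|K\bigcdot\|_V$, and a separate short argument shows $K$ is surjective, yielding $K\in\Lis(U_0,V)$. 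You instead eliminate $\vec{\omega}$ and verify Brezzi's two conditions from scratch for the form $\int_\Omega\curl\vec{u}\cdot\curl\vec{v}\,dx$ with constraint $\int_\Omega q\,\divv\vec{u}\,dx$, using the LBB surjectivity of $\divv\colon H^1_0(\Omega)^d\to L_{2,0}(\Omega)$ and the identity $\int_\Omega|\nabla\vec{v}|^2\,dx=\int_\Omega|\divv\vec{v}|^2+|\curl\vec{v}|^2\,dx$ on $H^1_0(\Omega)^d$ to get coercivity on divergence-free fields. Both routes apply Lemma~\ref{lem:bi} in the same way to strip off the trace. Your approach is more self-contained (it does not lean on the classical Stokes well-posedness as a black box), while the paper's avoids redoing the Brezzi estimates; both ultimately rest on the same vector Laplacian identity, used implicitly by the paper and explicitly by you. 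You also use the simpler rescaling $\hat p=p/\nu$ to get uniformity in $\nu$, whereas the paper rescales all variables by powers of $\sqrt\nu$ via \eqref{eq:11}; both are valid, and yours is a little cleaner since it makes the operator and the weighted trial norm literally $\nu$-independent. Minor remark: like the paper, you read the $L_{2,0}(\Omega)$ component of the codomain as the $L_{2,0}$-projection of $\divv\vec{u}$ (since $\divv\vec{u}$ for general $\vec{u}\in H^1(\Omega)^d$ need not have zero mean); on $Z_0$ this coincides with $\divv\vec{u}$ itself, as you correctly note.
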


This proposition generalizes \cite{35.93006} which considers $g=0$ and $\vec{h}=0$ (see also \cite[Ch.~7]{23.5}).
Its proof is postponed to Appendix~\ref{sec:app}.

Analogously to Sect.~\ref{sec:field}, using \eqref{eq:15} we have the following corollary.

\begin{corollary} \label{prop:stokes2} With $\gamma^{\vec{n}}\colon H(\divv;\Omega)^d\rightarrow H^{-\frac12}(\partial\Omega)^d\colon (\vec{v}_i)\mapsto(\vec{v}_i|_{\partial\Omega}\cdot \vec{n})$, let 
$$
G(\vec{u},p,\vec{\omega}):=\big(\vec{v}\mapsto \int_\Omega \vec{\omega} \cdot \curl \vec{v}-\tfrac{p}{\nu}\divv \vec{v}\,dx, \divv \vec{u},\vec{\omega}-\curl \vec{u}, {\gamma^{\vec{n}}}'\gamma(\vec{u})\big).
$$
Then
$$
G \!\in\! \cL\big(H^1(\Omega)^d\!\times\! L_{2,0}(\Omega) \!\times\! L_2(\Omega)^{2d-3}, H^{-1}(\Omega)^d \!\times\! L_{2,0}(\Omega) \!\times\! L_2(\Omega)^{2d-3} \!\times\! (H(\divv;\Omega)^d)' \big)
$$
is a homeomorphism with its range, and 
\be \label{eq:12}
G(\vec{u},p,\vec{\omega}) = \big(\vec{v}\mapsto\int_\Omega \tfrac{\vec{f} \cdot \vec{v}}{\nu} -g\divv \vec{v}\,dx, g,0,{\gamma^{\vec{n}}}' \vec{h}\big) \\
\ee
is a consistent formulation of \eqref{eq:Stokesequations}.
In particular,
$$
\|\vec{u}\|_{H^1(\Omega)}^2\!+\!\tfrac{1}{\nu^2}\|p\|_{L_{2,0}(\Omega)}^2\!+\!\|\vec{\omega}\|_{L_2(\Omega)}^2\!\eqsim \!
\|G(\vec{u},p,\vec{\omega})\|^2_{H^{-1}(\Omega)^d \times L_{2,0}(\Omega) \times L_2(\Omega)^{2d-3} \times (H(\divv;\Omega)^d)'}
$$
uniformly in $\nu>0$. 
\end{corollary}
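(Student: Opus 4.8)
The plan is to deduce the corollary from Proposition~\ref{prop:stokes} by post-composing its boundary component with the adjoint normal-trace operator, the key ingredient being the norm equivalence \eqref{eq:100} (equivalently \eqref{eq:15} read with $\Gamma_D=\partial\Omega$ and $\Gamma_N=\emptyset$, so that $H_{0,\Gamma_N}(\divv;\Omega)=H(\divv;\Omega)$). I would first record the scalar statement
$$
\|\phi\|_{H^{\frac12}(\partial\Omega)} \eqsim \|{\gamma^{\vec{n}}}'\phi\|_{H(\divv;\Omega)'}\qquad(\phi\in H^{\frac12}(\partial\Omega)),
$$
so that ${\gamma^{\vec{n}}}'\colon H^{\frac12}(\partial\Omega)\to H(\divv;\Omega)'$ is a homeomorphism onto its range ${\gamma^{\vec{n}}}'H^{\frac12}(\partial\Omega)$, and in particular injective. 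Since the vectorial normal trace $\gamma^{\vec{n}}\colon H(\divv;\Omega)^d\to H^{-\frac12}(\partial\Omega)^d$ acts componentwise, so does its adjoint: under the identification $(H(\divv;\Omega)^d)'\simeq(H(\divv;\Omega)')^d$ one has ${\gamma^{\vec{n}}}'\vec{\phi}=({\gamma^{\vec{n}}}'\phi_1,\dots,{\gamma^{\vec{n}}}'\phi_d)$ for $\vec{\phi}=(\phi_i)\in H^{\frac12}(\partial\Omega)^d=(H^{-\frac12}(\partial\Omega)^d)'$. As the norms on both sides are the $\ell_2$-sums of the componentwise norms, the scalar equivalence immediately upgrades to
$$
\|\vec{\phi}\|_{H^{\frac12}(\partial\Omega)^d} \eqsim \|{\gamma^{\vec{n}}}'\vec{\phi}\|_{(H(\divv;\Omega)^d)'}\qquad(\vec{\phi}\in H^{\frac12}(\partial\Omega)^d),
$$
so ${\gamma^{\vec{n}}}'\colon H^{\frac12}(\partial\Omega)^d\to(H(\divv;\Omega)^d)'$ is a homeomorphism onto its range and is injective.

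Next, writing $\widehat G$ for the operator of Proposition~\ref{prop:stokes} (the one whose last component is $\gamma(\vec{u})$), the operator $G$ of the corollary is $G=S\widehat G$, where
$$
S:=\blockdiag\big(\identity_{H^{-1}(\Omega)^d},\ \identity_{L_{2,0}(\Omega)},\ \identity_{L_2(\Omega)^{2d-3}},\ {\gamma^{\vec{n}}}'\big)
$$
is a bounded block-diagonal operator from the codomain of $\widehat G$ onto
$$
\ran S=H^{-1}(\Omega)^d\times L_{2,0}(\Omega)\times L_2(\Omega)^{2d-3}\times{\gamma^{\vec{n}}}'H^{\frac12}(\partial\Omega)^d .
$$
By the previous step $S$ is a homeomorphism onto $\ran S$ (three identities and one homeomorphism-onto-its-range), and it is injective. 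Since $\widehat G\in\Lis$ by Proposition~\ref{prop:stokes}, hence in particular surjective, the composition $G=S\widehat G$ is bounded, a homeomorphism onto $\ran G=S(\ran\widehat G)=\ran S$, which is exactly the range claimed. The asserted norm equivalence, uniform in $\nu>0$, then follows by chaining $\|G(\vec{u},p,\vec{\omega})\|=\|S\widehat G(\vec{u},p,\vec{\omega})\|\eqsim\|\widehat G(\vec{u},p,\vec{\omega})\|$, whose constants come from $S$ and are $\nu$-independent, with the $\nu$-uniform equivalence of Proposition~\ref{prop:stokes}.

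Finally, for consistency: the right-hand side of \eqref{eq:12} is precisely $S$ applied to the right-hand side in Proposition~\ref{prop:stokes}, so $G(\vec{u},p,\vec{\omega})$ equals the right-hand side of \eqref{eq:12} iff $\widehat G(\vec{u},p,\vec{\omega})$ equals that of Proposition~\ref{prop:stokes}; the first three components are untouched, and the last equation ${\gamma^{\vec{n}}}'\gamma(\vec{u})={\gamma^{\vec{n}}}'\vec{h}$ is equivalent to $\gamma(\vec{u})=\vec{h}$ because ${\gamma^{\vec{n}}}'$ is injective on $H^{\frac12}(\partial\Omega)^d$. Thus \eqref{eq:12} is a consistent formulation of \eqref{eq:Stokesequations} since the formulation of Proposition~\ref{prop:stokes} is. I do not expect a genuine obstacle here: the real content sits in Proposition~\ref{prop:stokes} (proved in Appendix~\ref{sec:app}) and in \eqref{eq:100}. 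The only points needing mild care are the componentwise identification $(H(\divv;\Omega)^d)'\simeq(H(\divv;\Omega)')^d$ together with the diagonal action of the normal trace, and the elementary fact that composing a bijection with a homeomorphism-onto-its-range yields a homeomorphism onto the image of the original range.
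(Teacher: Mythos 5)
Your proposal is correct and takes essentially the same approach as the paper, which gives no explicit proof beyond the remark ``Analogously to Sect.~\ref{sec:field}, using \eqref{eq:15} we have the following corollary''; you have simply spelled out the details of that one-line derivation (post-composition of the boundary component of the operator from Proposition~\ref{prop:stokes} with the componentwise adjoint normal trace, which by \eqref{eq:100}/\eqref{eq:15} with $\Gamma_D=\partial\Omega$ is a homeomorphism onto its range).
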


To apply to \eqref{eq:12} the Least Squares discretisation \eqref{eq:minresprac}/\eqref{eq:4} or \eqref{eq:minresprac2}, one has to realize the following inf-sup conditions.
\renewcommand{\themylistcounter}{(\Roman{mylistcounter})} \begin{mylist}
\item \label{item:I} Given families $(P^\delta)_{\delta \in \Delta} \subset L_{2,0}(\Omega)$ and $(C^\delta)_{\delta \in \Delta} \subset L_2(\Omega)^{2d-3}$, one needs a family $(Y^\delta)_{\delta \in \Delta} \subset H^1_0(\Omega)^d$ with 
$$
\sup_{0 \neq v \in Y^\delta} \frac{|M(p,\vec{\omega})(\vec{v})|}{\|\vec{v}\|_{H^1(\Omega)^d}} \gtrsim \|M(p,\vec{\omega})(\vec{v})\|_{H^{-1}(\Omega)^d} \quad ((p,\vec{\omega}) \in P^\delta \times C^\delta),
$$
where $M(p,\vec{\omega})(\vec{v}):=\int_\Omega \vec{\omega} \cdot \curl \vec{v}-\tfrac{p}{\nu}\divv \vec{v}\,dx$.
\item \label{item:II}
Given a family $(U^\delta)_{\delta \in \Delta} \subset H^1(\Omega)^d$, one needs $(Y^\delta)_{\delta \in \Delta} \subset H(\divv;\Omega)^d$ with 
$$
\sup_{0 \neq \vec{v} \in Y^\delta} \frac{|\int_{\partial\Omega} \gamma(\vec{u}) \cdot \gamma^{\vec{n}}(\vec{v}) \,ds|}{\|\vec{v}\|_{H(\divv;\Omega)^d}} \gtrsim
\|{\gamma^{\vec{n}}}'\gamma(\vec{u})\|_{(H(\divv \Omega)^d)'}\quad(\vec{u} \in U^\delta).
$$
\end{mylist}

Concerning \ref{item:I}: For $\tria^\delta$ being as in Sect.~\ref{sec:b}, let $P^\delta \subset \cS^{-1}_q(\tria^\delta) \cap L_{2,0}(\Omega)$ and $C^\delta \subset \cS^{-1}_q(\tria^\delta)^d$.
Then by writing for $(p,\vec{\omega}) \in P^\delta \times C^\delta$,
$$
M(p,\vec{\omega})(\vec{v})=\sum_{K \in \tria^\delta} \int_K (\curl \omega +\tfrac{1}{\nu} \nabla p)\cdot \vec{v}\,dx-\int_{\partial K} \tfrac{p}{\nu} \vec{v}\cdot\vec{n}+(\vec{\omega} \times \vec{v})\cdot \vec{n}\,ds,
$$
the arguments used for the construction in $\Pi_1^\delta$ in  \cite[Sect.~4.1]{204.19}  show that \ref{item:I} is satisfied for $Y^\delta := \cS_{q+d}^0(\tria^\delta)^d \cap H^1_{0}(\Omega)^d$.

As follows from \cite[Rem.~4.2]{204.19}, by taking $Y^\delta = \cS_{q+d+1}^0(\tria^\delta)^d \cap H^1_{0}(\Omega)^d$,
for $f \in H^{q+1}(\Omega)$ and $g \in H^{q+2}(\Omega)$ data-oscillation is of order $q+2$ which exceeds the order $q+1$ of best approximation of $P^\delta$ in $L_{2,0}(\Omega)$ and $C^\delta$ in $L_{2}(\Omega)^d$.

Concerning \ref{item:II}:  This inf-sup condition has been discussed in Sect.~\ref{sec:b} for the `scalar case'.
The results given there show that if $U^\delta=\cS^0_{q+1}(\tria^\delta)^d$, and $\tria_{\partial\Omega}^\delta$ is \emph{some} uniformly shape regular partition of $\Omega$ into (closed) $d$-simplices with $\cF(\tria_{\partial\Omega}^\delta) \cap \partial\Omega =\cF(\tria^\delta) \cap \partial\Omega$, then  \ref{item:II} is satisfied for $Y^\delta=\RT_{q+1}(\tria_{\partial\Omega}^\delta)^d$.

Similar to Remark~\ref{rem:dataosc1}, for $\vec{h}|_{e} \in H^{q+2}(e)^d$ ($e \in \cF(\tria^d) \cap \partial\Omega$), data-oscillation is of order $q+\frac32$ which exceeds the order $q+1$ of best approximation of $U^\delta$ in $H^1(\Omega)$.

\section{Application with Machine Learning} \label{sec:6}
\subsection{Abstract setting} \label{sec:abstract} \label{sec:6.1}
We return to the abstract setting to approximate the solution of $Gu=f$, where $G \in \cL(X,Y')$ with $\|\bigcdot\|_X \eqsim \|G \bigcdot\|_{Y'}=:\nrm \bigcdot\nrm_X$, and $f \in \ran G$.
Given a \emph{set} $\cX \subset X$, we aim to minimize $\frac12\|f-Gw\|_{Y'}^2$ over $w \in \cX$. As set $\cX$ we have in mind a collection of (Deep) Neural Net functions.

Recall the problem that in most applications  $\|\bigcdot\|_{Y'}$ is not evaluable. As before, to analyze this situation it suffices to consider the setting that $Y'=Y_1'\times Y_2'$ with $\|\bigcdot\|_{Y_2'}$ evaluable, and $\|\bigcdot\|_{Y_1'}$ not being evaluable.

Earlier, for $X^\delta$ being a closed \emph{linear subspace} of $X$, we solved this problem by replacing $\|\bigcdot\|_{Y_1'}$ by the discretized dual norm
$\sup_{0 \neq  v \in Y_1^\delta}\frac{|\bigcdot(v)|}{\|v\|_{Y_1}}$, where $Y_1^\delta$ is a closed \emph{linear subspace} of $Y_1$ that satisfies the (uniform) inf-sup condition \eqref{eq:defgamma}. As has been shown in Theorem~\ref{thm:quasi-opt}, the resulting Least-Squares approximation $u^\delta \in X^\delta$ is a quasi-best approximation to $u$ w.r.t.~$\nrm\bigcdot\nrm_X$.

For \emph{subsets} $\cX \subset X$ and $\cY_1 \subset Y_1$, a substitute for Theorem~\ref{thm:quasi-opt} is the following Proposition~\ref{prop:2}.
It requires that $\cY_1$ is closed under scalar multiplication, which, by the absence of an activation function in the output layer, holds true for $\cY_1$ being a collection of `adversarial' (Deep) Neural Net functions (possibly with components multiplied by a function $\phi$ with $\phi(x) $ proportional to the distance of $x$ to (part of) the boundary to enforce an homogeneous boundary condition) .

Proposition~\ref{prop:2} is based on \cite[Lemma~3]{20.25}, where \eqref{prop:2} is milder than the corresponding condition in \cite{20.25}.

\begin{proposition} \label{prop:2}
Given a set $\cX \subset X$, let $\cY_1 \subset Y_1$ be closed under scalar multiplication, and sufficiently large such that
\be \label{eq:18}
\alpha:=\inf_{\{w,\tilde{w} \in \cX\colon G_1(w-\tilde{w})\neq 0\}} \frac{\sup_{0 \neq v \in \cY_1} \frac{|G_1(w-\tilde{w})(v)|}{\|v\|_{Y_1}}}{\|G_1(w-\tilde{w})\|_{Y_1'}}>0.
\ee
Then the Least Squares approximation\footnote{For simplicity we assume that a minimum exists. Otherwise, for $(u_n)_n \subset \cX$ with 
$\lim_n \sup_{0 \neq v \in \cY_1} \frac{|(f_1-G_1 u_n)(v)|^2}{\|v\|_{Y_1}^2} +\|f_2-G_2 u_n\|_{Y_2'}^2 =
\inf_{w \in \cX} \sup_{0 \neq v \in \cY_1} \frac{|(f_1-G_1 w)(v)|^2}{\|v\|_{Y_1}^2} +\|f_2-G_2 w\|_{Y_2'}^2$,
one verifies that $\limsup_n  \nrm u-u_n\nrm_X \leq  (1+\tfrac{2}{\alpha}) \inf_{w \in \cX} \nrm u-w\nrm_X$.} 
$$
u_\cX(=u_{\cX \cY_1})=\argmin_{w \in \cX} \tfrac12 \big\{\sup_{0 \neq v \in \cY_1} \frac{|(f_1-G_1 w)(v)|^2}{\|v\|_{Y_1}^2} +\|f_2-G_2 w\|_{Y_2'}^2 \big\}
$$
satisfies
$$
 \nrm u-u_\cX\nrm_X \leq  (1+\tfrac{2}{\alpha}) \inf_{w \in \cX} \nrm u-w\nrm_X.
$$
\end{proposition}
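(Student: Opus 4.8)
The plan is to mimic the structure of the proof of Theorem~\ref{thm:quasi-opt}, but working with \emph{sets} rather than linear subspaces, so that we can no longer invoke projector-norm identities and must instead argue directly via the triangle inequality. First I would introduce the combined test set $\cY := \cY_1 \times Y_2 \subset Y$, which is closed under scalar multiplication since $\cY_1$ is and $Y_2$ is a linear space. Using the additivity identity from Footnote~\ref{foottie} (valid for sets closed under scalar multiplication), I would observe that
$$
\sup_{0 \neq v \in \cY} \frac{|(G(w-\tilde w))(v)|^2}{\|v\|_Y^2} = \sup_{0 \neq v \in \cY_1} \frac{|(G_1(w-\tilde w))(v)|^2}{\|v\|_{Y_1}^2} + \|G_2(w-\tilde w)\|_{Y_2'}^2,
$$
and combine \eqref{eq:18} with the trivial bound $1$ coming from the $Y_2$-factor to get that for all $w,\tilde w \in \cX$,
$$
\sup_{0 \neq v \in \cY} \frac{|(G(w-\tilde w))(v)|}{\|v\|_Y} \geq \alpha \, \nrm w - \tilde w \nrm_X,
$$
i.e.\ an inf-sup-type lower bound on the discretized dual norm restricted to differences of trial functions.

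Next I would set $g := \sup_{0 \neq v \in \cY} \frac{|(G\bigcdot - f)(v)|}{\|v\|_Y}$, which is a seminorm on $X$ dominated by $\nrm \bigcdot - \text{(something)}\nrm$; more precisely, since $f = Gu$, for any $w \in X$ we have $g(w) = \sup_{0 \neq v \in \cY} \frac{|(G(w-u))(v)|}{\|v\|_Y} \leq \nrm w - u \nrm_X$, because $\cY \subset Y$. The minimizer $u_\cX$ satisfies $g(u_\cX) \leq g(w)$ for all $w \in \cX$, hence $g(u_\cX) \leq \inf_{w \in \cX} \nrm w - u \nrm_X =: \eta$. Now pick a near-minimizer $\bar w \in \cX$ with $\nrm \bar w - u \nrm_X$ close to $\eta$ (or the exact minimizer if it exists). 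Applying the inf-sup bound to $w = u_\cX$ and $\tilde w = \bar w$, and then the triangle inequality on the seminorm $g$,
$$
\alpha \, \nrm u_\cX - \bar w \nrm_X \leq g(u_\cX - \bar w + u) \cdot \text{?}
$$
— here one must be slightly careful: $g$ is defined via $G\bigcdot - f$, so $g(u_\cX) - g(\bar w)$ does not directly control $\sup \frac{|(G(u_\cX - \bar w))(v)|}{\|v\|_Y}$. Instead I would write $\sup_{0\neq v\in\cY}\frac{|(G(u_\cX-\bar w))(v)|}{\|v\|_Y} \leq g(u_\cX) + g(\bar w) \leq 2\eta$ (using $(G(u_\cX-\bar w))(v) = (Gu_\cX - f)(v) - (G\bar w - f)(v)$), so that $\alpha \nrm u_\cX - \bar w\nrm_X \leq 2\eta$.

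Finally, a triangle inequality in $\nrm\bigcdot\nrm_X$ finishes it:
$$
\nrm u - u_\cX \nrm_X \leq \nrm u - \bar w \nrm_X + \nrm \bar w - u_\cX \nrm_X \leq \eta + \tfrac{2}{\alpha}\eta = \big(1 + \tfrac{2}{\alpha}\big)\eta,
$$
and letting $\bar w$ run through a minimizing sequence gives the claimed bound with $\eta = \inf_{w \in \cX}\nrm u - w\nrm_X$. The footnote in the statement already indicates how to handle the case where no minimizer $u_\cX$ exists (pass to a minimizing sequence and take $\limsup$), so I would relegate that to a remark. I expect the main (only real) obstacle to be bookkeeping the seminorm $g$ correctly: since $g$ is an \emph{affine} quantity (residual of $G\bigcdot - f$), one cannot apply the inf-sup bound \eqref{eq:18} to it directly, and the clean way around is to bound $\sup_{0\neq v\in\cY}\frac{|(G(u_\cX - \bar w))(v)|}{\|v\|_Y}$ by $g(u_\cX) + g(\bar w)$ first and only then invoke \eqref{eq:18}; everything else is the standard Céa-type argument adapted from \cite[Lemma~3]{20.25}.
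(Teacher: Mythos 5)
Your proposal is correct and follows essentially the same route as the paper's proof: establish an inf-sup bound for differences of trial functions over $\cY_1\times Y_2$, then chain triangle inequalities with the minimality of $u_\cX$ in the discretized dual norm and the bound of that norm by the full dual norm. The only cosmetic difference is that you derive the combined inf-sup constant via the additivity identity of Footnote~\ref{foottie} and fix a near-minimizer $\bar w$, whereas the paper constructs a near-supremizing pair $(v_1,v_2)$ directly and runs the chain with an arbitrary $w\in\cX$, taking the infimum at the end.
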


\begin{proof}  Given an $\eps>0$, for any $w,\tilde{w} \in \cX$, there exists a $v_1\in \cY_1$ with $\|v_1\|_{Y_1}=\|G_1 (w-\tilde{w})\|_{Y_1'}$ and $(G_1 (w-\tilde{w}))(v_1) \geq (\alpha-\eps) \|G_1 (w-\tilde{w})\|_{Y_1'}^2$, where we used that $\cY_1$ is closed under scalar multiplication. Also, there exists a $v_2\in Y_2$ with $\|v_2\|_{Y_2}=\|G_2 (w-\tilde{w})\|_{Y_2'}$ and $(G_2 (w-\tilde{w}))(v_2) = \|G_2 (w-\tilde{w})\|_{Y_2'}^2$.
So with $v=(v_1,v_2) \in \cY_1 \times Y_2$,
\begin{align*}
(G(w-\tilde{w}))(v)=\sum_{i=1}^2 (G_i(w-\tilde{w}))(v_i) & \geq (\alpha-\eps) \sum_{i=1}^2 \|G_i (w-\tilde{w})\|_{Y_i'}^2\\ &=(\alpha-\eps) \nrm w-\tilde{w}\nrm_X \|v\|_{\cY},
\end{align*}
and so
$$
\inf_{w \neq \tilde{w} \in \cX} \sup_{0 \neq v \in \cY_1 \times Y_2} \frac{|(G(w-\tilde{w}))(v)|}{\|v\|_{Y}\nrm w-\tilde{w}\nrm_X} \geq \alpha.
$$

Now for any $w \in \cX$,
\begin{align*}
\nrm u-u_\cX\nrm_X &\leq \nrm u-w\nrm_X+\nrm w-u_\cX\nrm_X\\
&\leq \nrm u-w\nrm_X+\tfrac{1}{\alpha} \sup_{0 \neq v \in \cY_1 \times Y_2} \frac{|(G(w-u_\cX))(v)|}{\|v\|_{Y}} \\
&\leq \nrm u-w\nrm_X+\tfrac{1}{\alpha} \big(\sup_{0 \neq v \in \cY_1 \times Y_2} \frac{|(f-Gw)(v)|}{\|v\|_{Y}} +\sup_{0 \neq v \in \cY_1 \times Y_2} \frac{|(f-Gu_\cX)(v)|}{\|v\|_{Y}} \big) \\
&\leq \nrm u-w\nrm_X+\tfrac{2}{\alpha}\sup_{0 \neq  v \in \cY_1 \times Y_2} \frac{|(f-Gw)(v)|}{\|v\|_{Y}} \\
&\leq (1+\tfrac{2}{\alpha}) \nrm u-w\nrm_X,
\end{align*}
where the one but last inequality holds true by definition of $u_\cX$ and Footnote~\ref{foottie}.
\end{proof}

\begin{remark}[A posteriori error estimation]
In the setting of Proposition~\ref{prop:2}, we set the (squared) error estimator by
$$
\cE(w,f):=\sup_{0 \neq v \in \cY_1} \frac{|(f_1-G_1 w)(v)|^2}{\|v\|_{Y_1}^2} +\|f_2-G_2 w\|_{Y_2'}^2, 
$$
Clearly it holds that $\cE(w,f) \leq \nrm u- w\nrm_X^2$, i.e., the estimator is \emph{efficient}.

Now let $\cX \subset \widetilde{\cX} \subset X$ be such that  $(\widetilde{\cX},\cY_1)$ is inf-sup stable in the sense of \eqref{eq:18} with constant $\widetilde{\alpha} \in (0,\alpha]$, and such that for some constant $\varrho<1$,
$$
\inf_{z \in \widetilde{\cX}} \nrm u -z\nrm_X \leq \varrho \inf_{z \in \cX} \nrm u -z\nrm_X,
$$
known as  a \emph{saturation assumption}. Then, as in the proof of Proposition~\ref{prop:2}, we have
$$
\nrm u_\cX-u_{\widetilde{\cX}}\nrm_X \leq \tfrac{2}{\widetilde{\alpha} }\sup_{0 \neq  v \in \cY_1 \times Y_2} \frac{|(f-Gu_\cX)(v)|}{\|v\|_{Y}}=\tfrac{2}{\widetilde{\alpha} } \sqrt{\cE(u_\cX,f)}.
$$
From $\nrm u-u_\cX\nrm_X \leq \nrm u-u_{\widetilde{\cX}} \nrm_X+\nrm u_{\widetilde{\cX}}-u_\cX\nrm_X \leq \varrho \nrm u-u_\cX \nrm_X+\nrm u_{\widetilde{\cX}}-u_\cX\nrm_X$, we conclude that $\nrm u-u_\cX\nrm_X  \leq \tfrac{2}{(1-\varrho)\widetilde{\alpha} }\sqrt{\cE(u_\cX,f)}$, i.e., the estimator is \emph{reliable}.

For the case that $\cX$, $\widetilde{\cX}$, and $\cY_1$ are {\em linear subspaces}, a similar technique to construct an efficient and reliable a posteriori estimator was applied in \cite[Lemma 2.6]{168.88}.
\end{remark}

A straighforward (approximate) computation of $\sup_{0 \neq v \in \cY_1} \frac{|(f_1-G_1 w)(v)|^2}{\|v\|_{Y_1}^2}$, required for the Least Squares approximation (as well as for the a posteriori error estimator), turns out to be unstable as can be understood from the fact that for $f_1-G_1 w=0$, any $0 \neq v \in \cY_1$ is a supremizer.
A stable computation is provided by the following result.
\begin{lemma}[{\cite[Lemma~4]{20.25}}] \label{lem:1}
Let $\cY_1 \subset Y_1$ be closed under scalar multiplication. Then for any $g \in Y_1'$,
$$
\sup_{0 \neq v \in \cY_1} \frac{|g(v)|^2}{\|v\|_{Y_1}^2}=\sup_{v \in \cY_1} 2\Re g(v) -\|v\|_{Y_1}^2. \,\,\,\footnotemark
$$
\footnotetext{Obviously, taking the real part can be omitted if $Y_1$ is a Hilbert space over $\R$ as we will consider in our applications.}
\end{lemma}

\begin{proof} Let us denote $\sup_{0 \neq v \in \cY_1} \frac{|g(v)|}{\|v\|_{Y_1}}$ by $\|g\|_{\cY_1'}$.
Given $\eps>0$, let $\widetilde{v}\in \cY_1$ be such that $\|\widetilde{v}\|_{Y_1}=1$ and $g(\widetilde{v}) \geq (1-\tfrac{\eps}{2}) \|g\|_{\cY_1'}$. Then for $v:=\|g\|_{\cY_1'} \widetilde{v}$, 
$$
2g(v)-\|v\|_{Y_1}^2=2\|g\|_{\cY_1'}g(\widetilde{v})-\|g\|_{\cY_1'}^2 \geq (1-\eps) \|g\|_{\cY_1'}^2,
$$
so that $\|g\|_{\cY_1'}^2 \leq \sup_{v \in \cY_1} 2\Re g(v) -\|v\|_{Y_1}^2$.

On the other hand,
$$
\sup_{v \in \cY_1} 2 \Re g(v) -\|v\|_{Y_1}^2
\leq \sup_{v \in \cY_1} 2\|g\|_{\cY_1'}\|v\|_{Y_1}-\|v\|_{Y_1}^2 \leq \|g\|_{\cY_1'}^2. \qedhere
$$
\end{proof}

Above results show how to avoid the unfeasible computation of $\|\bigcdot\|_{Y_1'}$. 
When also the computation of $\|\bigcdot\|_{Y_1}$ is unfeasible, for $\cY_1 \subset Y_1$ being a \emph{linear subspace} the approach from \cite{204.19}, which was recalled in Sect.~\ref{sec:implementation}, is to replace $\|\bigcdot\|_{Y_1}$ by an on $\cY_1$ equivalent norm defined in terms of an (optimal) preconditioner.
This approach does \emph{not} apply in the current setting, and so we will avoid the situation that both $\|\bigcdot\|_{Y_1'}$ and $\|\bigcdot\|_{Y_1}$ are non-evaluable norms, as when $Y_1$ is a fractional Sobolev norm. 

\subsection{Application to model elliptic second order boundary value problem} \label{sec:applmodel}
For the model elliptic second order boundary value problem \eqref{bvp}, we apply Least Squares to the \emph{modified mild}, or \emph{modified mild-weak} first order system formulations \ref{secondaccent} or \ref{thirdaccent} from Sect.~\ref{sec:field}, where we replace the imposition of the Dirichlet boundary condition by means of \eqref{eq:15} by that from Lemma~\ref{lem:extra}.
For the formulation \ref{secondaccent}, for $\cX \subset X:=H(\divv;\Omega) \times H^1(\Omega)$ and $\cY \subset H_{\triangle,0,\Gamma_N}(\Omega) \times H^1_{0,\Gamma_D}(\Omega)$, using Lemma~\ref{lem:1} it results in the problem of finding
\be \label{eq:103}
\begin{split}
\argmin_{(\vec{q},w) \in \cX } &\Big[\tfrac12
\|\vec{q} -A\nabla w\|_{L_2(\Omega)^d}^2+\tfrac12 \|B w-\divv \vec{q}-g\|_{L_2(\Omega)}^2+
\\
&\sup_{\vec{v}=(v_1,v_2) \in \cY} \int_{\Gamma_D}(w-h_D) \gamma_{\Gamma_D}^{\vec{n}}(\nabla v_1)\,ds +\int_{\Gamma_N} (\vec{q}\cdot\vec{n}-h_N)v_2\,ds
\\
&\hspace*{12em}-\tfrac12\|v_1\|_{H_\triangle(\Omega)}^2-\tfrac12\|v_2\|_{H^1(\Omega)}^2
\Big].
\end{split}
\ee
Obvious adaptations are required when either $\Gamma_D=\emptyset$ or $\Gamma_N=\emptyset$ (so that $\cY \subset H^1(\Omega)$ or $\cY \subset H_{\triangle}(\Omega)$).

Analogously, one derives the Least Squares problem resulting from \ref{thirdaccent} with \eqref{eq:15} replaced by Lemma~\ref{lem:extra}.

For large $d$, the approximation of the $d$-dimensional vector field $\vec{q}$ is computational demanding. In that case one may resort to the standard \emph{second order} variational formulation. From Lemma~\ref{lem:extra} one infers that finding $u \in H^1(\Omega)$ such that
$$
 \left\{
\begin{array}{r@{}c@{}ll}
\int_\Omega A \nabla u\cdot \nabla v+B u\,v\,dx &\,\,=\,\,& g(v) +\int_{\Gamma_N} h_N \gamma_{\Gamma_N}(v) \,ds& (v \in H^1_{0,\Gamma_D}(\Omega)),\\
(\gamma_{\Gamma_D}^{\vec{n}}\circ \nabla)' \gamma_{\Gamma_D}(u)&\,\,=\,\,& (\gamma_{\Gamma_D}^{\vec{n}}\circ \nabla)' h_D & \text{ in } H_{\triangle,0,\Gamma_N}(\Omega)'.
\end{array}
\right.
$$
defines a homeomorphism between $H^1(\Omega)$ and its range in $(H^1_{0,\Gamma_D}(\Omega) \times H_{\triangle,0,\Gamma_N}(\Omega))'$.
For $\cX \subset X:=H^1(\Omega)$ and $\cY \subset H^1_{0,\Gamma_D}(\Omega) \times H_{\triangle,0,\Gamma_N}(\Omega)$, it leads to the Least Squares problem of finding 
\be \label{eq:104}
\begin{split}
\argmin_{w \in \cX} &\sup_{\vec{v}=(v_1,v_2) \in \cY}
\int_\Omega A \nabla w \cdot\nabla v_1+B w\, v_1\,dx-g(v_1)-\int_{\Gamma_N} h_N \gamma_{\Gamma_N}(v_1) \,ds\\
&+\int_{\Gamma_D} (w-h_D) \gamma_{\Gamma_D}^{\vec{n}} (\nabla v_2)\,ds-\tfrac12 \|v_1\|_{H^1(\Omega)}^2 -\tfrac12\|v_2\|_{H_{\triangle}(\Omega)}^2.
\end{split}
\ee

For this second order formulation, and, in the case of mixed boundary conditions, for the modified mild first order formulation, one has to enforce homogeneous boundary conditions in the test set by multiplying Neural Net functions by a function that is proportional to the distance to the corresponding part of the boundary.

Proposition~\ref{prop:2} shows that \emph{if} in the above formulations $\cY$ is sufficiently large in relation to $\cX$, i.e., independently from the data $(g,h_D,h_N)$, \emph{then} the Least Squares solution from $\cX$ is a \emph{quasi-best approximation} from $\cX$ to the exact solution in the norm on $X$.

A similar conclusion can be drawn for the Least Squares approximation to the Stokes equations based on their formulation from Corollary~\ref{prop:stokes2}.

\begin{remark}
In the above examples we have seen that for sufficiently large $\cY$, the Least Squares solution from $\cX$ is a quasi-best approximation to the exact solution from $X$. Notice, however, that other than for the finite element setting discussed in Sections~\ref{sec:b}-\ref{sec:d}, and ~\ref{sec:5}, so far for sets of Neural Net functions $\cX$ and $\cY$ the condition of $\cY$ being sufficiently large, i.e., to satisfy \eqref{eq:18}, has \emph{not} been verified.
\end{remark}

\section{Numerical results} \label{sec:7}
\subsection{Experiments with Finite Elements} \label{sec:numfem}
We take an example from \cite{249.965}. On a 
\parbox[top]{9cm}{rectangular domain $\Omega = (-1,1)\times (0,1)$, i.e., $d=2$, with  Neumann and Dirichlet boundaries $\Gamma_N=[-1,0]\times\{0\}$ and $\Gamma_D=\overline{\partial \Omega \setminus \Gamma_N}$,
for $g \in H^1_{0,\Gamma_D}(\Omega)'$, $h_D \in H^{\frac12}(\Gamma_D)$, and $h_N \in H^{-\frac12}(\Gamma_N)$, we consider the Poisson problem of finding $u \in H^1(\Omega)$ that satisfies} \qquad\raisebox{-0.75cm}[0cm][0cm]{\begin{picture}(0,0)%
\includegraphics{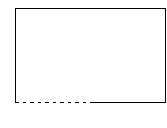}%
\end{picture}%
\setlength{\unitlength}{1973sp}%
\begingroup\makeatletter\ifx\SetFigFont\undefined%
\gdef\SetFigFont#1#2#3#4#5{%
  \reset@font\fontsize{#1}{#2pt}%
  \fontfamily{#3}\fontseries{#4}\fontshape{#5}%
  \selectfont}%
\fi\endgroup%
\begin{picture}(2652,2028)(961,-1957)
\put(1576,-1861){\makebox(0,0)[lb]{\smash{{\SetFigFont{8}{9.6}{\rmdefault}{\mddefault}{\updefault}{\color[rgb]{0,0,0}$\Gamma_N$}%
}}}}
\put(2176,-886){\makebox(0,0)[lb]{\smash{{\SetFigFont{8}{9.6}{\rmdefault}{\mddefault}{\updefault}{\color[rgb]{0,0,0}$\Omega$}%
}}}}
\put(3526,-1861){\makebox(0,0)[lb]{\smash{{\SetFigFont{8}{9.6}{\rmdefault}{\mddefault}{\updefault}{\color[rgb]{0,0,0}$1$}%
}}}}
\put(976,-1561){\makebox(0,0)[lb]{\smash{{\SetFigFont{8}{9.6}{\rmdefault}{\mddefault}{\updefault}{\color[rgb]{0,0,0}$0$}%
}}}}
\put(976,-136){\makebox(0,0)[lb]{\smash{{\SetFigFont{8}{9.6}{\rmdefault}{\mddefault}{\updefault}{\color[rgb]{0,0,0}$1$}%
}}}}
\put(1051,-1861){\makebox(0,0)[lb]{\smash{{\SetFigFont{8}{9.6}{\rmdefault}{\mddefault}{\updefault}{\color[rgb]{0,0,0}$-1$}%
}}}}
\put(2326,-1861){\makebox(0,0)[lb]{\smash{{\SetFigFont{8}{9.6}{\rmdefault}{\mddefault}{\updefault}{\color[rgb]{0,0,0}$0$}%
}}}}
\end{picture}%
}
$$
 \left\{
\begin{array}{r@{}c@{}ll}
-\Delta u&\,\,=\,\,& g &\text{ on } \Omega,\\
u &\,\,=\,\,& h_D &\text{ on } \Gamma_D,\\
\nabla u \cdot \vec{n}&\,\,=\,\,& h_N &\text{ on } \Gamma_N.
\end{array}
\right.
$$
We prescribe the solution $u(r,\theta) := r^{\frac12} \sin \frac{\theta}{2}$ in polar coordinates, and determine the data correspondingly. Then $g=0$, $h_N=0$, and $h_D=0$ on $[0,1]\times\{0\}$, but $h_D \neq 0$ on the remaining part of $\Gamma_D$.
It is known that $u \in H^{\frac32-\eps}(\Omega)$ for all $\eps>0$, but $u \not\in H^{\frac32}(\Omega)$ (\cite{77}).

We consider the above problem in the \emph{modified mild formulation} \ref{secondaccent}.
For finite element spaces $P^\delta \times U^\delta \subset H(\divv;\Omega) \times H^1(\Omega)$ and $Y_{\rm{\ref{item:b}}}^\delta \times Y_{\rm{\ref{item:c}}}^\delta \subset H_{0,\Gamma_N}(\divv;\Omega) \times H^1_{0,\Gamma_D}(\Omega)$, a Least Squares discretization using discretized dual-norms as presented in Sect.~\ref{sec:discretization}-\ref{sec:implementation} leads to the saddle-point problem to find 
$(\vec{\lambda}_{\rm{\ref{item:b}}}^\delta, \lambda_{\rm{\ref{item:c}}}^\delta,p^\delta,u^\delta) \in Y_{\rm{\ref{item:b}}}^\delta \times Y_{\rm{\ref{item:c}}}^\delta \times P^\delta \times U^\delta$ for which
\begin{align*}
&\langle \vec{\lambda}_{\rm{\ref{item:b}}}^\delta, \vec{\mu}_{\rm{\ref{item:b}}}\rangle_{H(\divv;\Omega)}+ 
\langle \nabla \lambda_{\rm{\ref{item:c}}}^\delta, \nabla \mu_{\rm{\ref{item:c}}}\rangle_{L_2(\Omega)^d}+ 
\int_{\Gamma_D} u^\delta \vec{\mu}_{\rm{\ref{item:b}}}\cdot \vec{n}\,ds+
\int_{\Gamma_N}\vec{p}^\delta\cdot\vec{n}  \mu_{\rm{\ref{item:c}}} \,ds\\
&\hspace*{4em}=
\int_{\Gamma_D} h_D \vec{\mu}_{\rm{\ref{item:b}}}\cdot \vec{n}\,ds+
\int_{\Gamma_N}h_N \mu_{\rm{\ref{item:c}}} \,ds \qquad (( \vec{\mu}_{\rm{\ref{item:b}}},\mu_{\rm{\ref{item:c}}}) \in Y_{\rm{\ref{item:b}}}^\delta \times Y_{\rm{\ref{item:c}}}^\delta),\\
&\int_{\Gamma_D} w \vec{\lambda}^\delta_{\rm{\ref{item:b}}}\cdot \vec{n}\,ds+
\int_{\Gamma_N}\vec{q}\cdot\vec{n}  \lambda^\delta_{\rm{\ref{item:c}}} \,ds
- \langle \vec{p}^\delta-\nabla u^\delta,\vec{q}-\nabla w\rangle_{L_2(\Omega)^d}
- \langle \divv \vec{p}^\delta,\divv \vec{q}\rangle_{L_2(\Omega)}\\
&\hspace*{4em}=0 \hspace*{15.5em}((\vec{q},w) \in P^\delta \times U^\delta).
\end{align*}

Let $\bbT$ denote the collection of all conforming triangulations that can be created by newest vertex bisections starting from the initial triangulation that consists of 8 triangles created by first cutting $\Omega$ along the y-axis into two equal parts, and then cutting the resulting two squares along their diagonals. The interior vertex of the initial triangulation of both squares are labelled as the `newest vertex' of all four triangles in both squares.

For $q \in \N_0$ and a family $\{\tria^\delta\}_{\delta \in \Delta} \subset \bbT$, we take
$$
P^\delta=\RT_{q}(\tria^\delta),\quad U^\delta=\cS_{q+1}^0(\tria^\delta).
$$
Now for $\delta \in \Delta$, and $\tria^\delta_D, \tria^\delta_N \in \bbT$ being the coarsest triangulations with
$\cF(\tria^\delta_D) \cap \Gamma_D =\cF(\tria^\delta) \cap \Gamma_D$ and $\cF(\tria^\delta_N) \cap \Gamma_N =\cF(\tria^\delta) \cap \Gamma_N$, we take
$$
Y_{\rm{\ref{item:b}}}^\delta:= \RT_{q+1}(\tria_D^\delta) \cap H_{0,\Gamma_N}(\divv;\Omega),\quad 
Y_{\rm{\ref{item:c}}}^\delta:=\cS^0_{d+q}(\tria^\delta_N) \cap H^1_{0,\Gamma_D}(\Omega).
$$
Then as follows from Theorems~\ref{thm: infsup1}, \ref{thm: infsup2}, and \ref{thm:quasi-opt}, $(p^\delta,u^\delta)$ is a \emph{quasi-best} approximation to $(\nabla u, u)$ from $P^\delta \times U^\delta $ w.r.t.~the norm on $H(\divv;\Omega)\times H^1(\Omega)$.
\footnote{Notice that if, for convenience of implementation, one would take $\tria^\delta_D=\tria^\delta_N=\tria^\delta$, then obviously the same result holds true.}

As follows from Remark~\ref{rem:2}, 
\begin{align*}
\mathcal{E}(\vec{\lambda}_{\rm{\ref{item:b}}}^\delta, \lambda_{\rm{\ref{item:c}}}^\delta, 
&u^\delta, p^\delta):= \\ &\sqrt{\|\vec{\lambda}_{\rm{\ref{item:b}}}^\delta\|_{H(\divv;\Omega)}^2+\|\lambda_{\rm{\ref{item:c}}}^\delta\|_{H^1(\Omega)}^2+\|\vec{p}^\delta-\nabla u^\delta\|_{L_2(\Omega)^d}^2+\|\divv p^\delta\|_{L_2(\Omega)^d}^2}
\end{align*}
is an efficient, and, by Remarks~\ref{rem:3}, \ref{rem:dataosc1}, and \ref{rem:dataosc2}, asymptotically reliable estimator for the error
$\sqrt{\|\nabla u -p^\delta\|_{H(\divv;\Omega)}^2+\|u-u^\delta\|_{H^1(\Omega)}^2}$.

Because of the limited smoothness of the solution, the asymptotic convergence rate for uniform refinements cannot be expected to exceed $\frac14$.
To drive an adaptive scheme, the error estimator needs to be split into element-wise contributions.
While it is natural to split $\|\vec{p}^\delta-\nabla u^\delta\|_{L_2(\Omega)^d}^2$ and $\|\divv p^\delta\|_{L_2(\Omega)^d}^2$
into contributions corresponding to elements $K\in\mathcal{T}^\delta$, a similar splitting of $\|\vec{\lambda}_{\rm{\ref{item:b}}}^\delta\|_{H(\divv;\Omega)}^2$ and $\|\lambda_{\rm{\ref{item:c}}}^\delta\|_{H^1(\Omega)}^2$, although possible, would require additional work since 
$\vec{\lambda}_{\rm{\ref{item:b}}}^\delta$ and $\lambda_{\rm{\ref{item:c}}}^\delta$ are finite element functions w.r.t.~partitions $\tria_D^\delta$ and $\tria_N^\delta$, which generally are coarser than $\mathcal{T}^\delta$. Since we expect, however, that $\vec{\lambda}_{\rm{\ref{item:b}}}^\delta$ and $\lambda_{\rm{\ref{item:c}}}^\delta$ have their largest values at elements at the Dirichlet or Neuman boundary, we will ignore their contributions to the estimator associated to other elements. 
Making use of the fact that $\cF(\tria^\delta_D) \cap \Gamma_D =\cF(\tria^\delta) \cap \Gamma_D$ and $\cF(\tria^\delta_N) \cap \Gamma_N =\cF(\tria^\delta) \cap \Gamma_N$, for $K \in \mathcal{T}^\delta$ we define the local estimator as follows
\begin{align*}
\eta_K^2:=\|\vec{p}^\delta&-\nabla u^\delta\|_{L_2(K)^d}^2+\|\divv p^\delta\|_{L_2(K)^d}^2\\
&+\left\{\begin{array}{@{}ll} \|\vec{\lambda}_{\rm{\ref{item:b}}}^\delta\|_{H(\divv;K')}^2 & \text{when }
\partial K \cap \Gamma_D=\partial K' \cap \Gamma_D \neq \emptyset,\text{ and } K' \in \tria_D^\delta,\\ \|\lambda_{\rm{\ref{item:c}}}^\delta\|_{H^1(K')}^2 & \text{when }\partial K \cap \Gamma_N=\partial K' \cap \Gamma_N \neq \emptyset, \text{ and } K' \in \tria_N^\delta \end{array}\right.
\end{align*}
The results given in Figure~\ref{figure:fem} indicate that the adaptive routine driven by $\{\eta_K^2\colon K \in \tria^\delta\}$ with bulk chasing parameter $\theta=0.6$ converges with the best possible rate for both $q=0$ and $q=1$.
\begin{figure}[h]
\centering
\begin{subfigure}{0.5\textwidth}
\includegraphics[width = \textwidth]{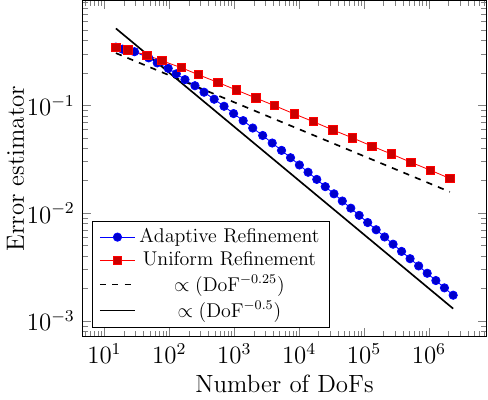}
\label{figure:fem_a}
\end{subfigure}%
\begin{subfigure}{0.5\textwidth}
\includegraphics[width = \textwidth]{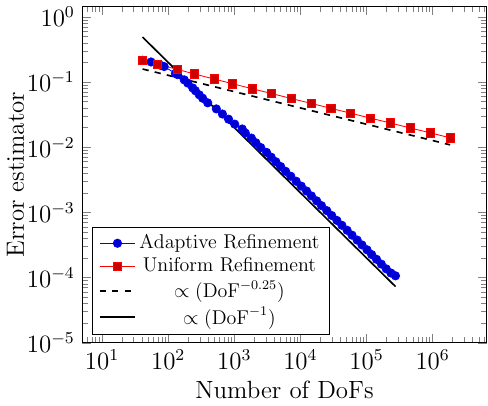}
\label{figure:fem_b}
\end{subfigure}
\begin{subfigure}{0.5\textwidth}
\includegraphics[width = \textwidth]{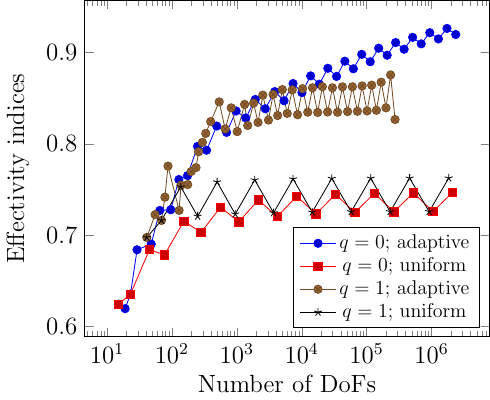}
\label{figure:fem_c}
\end{subfigure}%
\begin{subfigure}{0.5\textwidth}
\includegraphics[width = \textwidth]{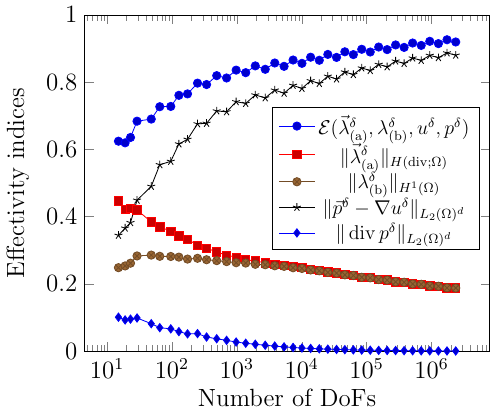}
\label{figure:fem_d}
\end{subfigure}
\caption{Results with Sect.~\ref{sec:numfem} (FEM). Top row: \#DoFs in $P^\delta\times U^\delta$ vs. $\mathcal{E}(\vec{\lambda}_{\rm{\ref{item:b}}}^\delta, \lambda_{\rm{\ref{item:c}}}^\delta, u^\delta, p^\delta)$ for $q=0$ (left) and $q=1$ (right). \newline
Bottom left: \#DoFs in $P^\delta\times U^\delta$ vs. effectivity index $\mathcal{E}(\vec{\lambda}_{\rm{\ref{item:b}}}^\delta, \lambda_{\rm{\ref{item:c}}}^\delta, u^\delta, p^\delta)/ \sqrt{\|\nabla u -p^\delta\|_{H(\divv;\Omega)}^2+\|u-u^\delta\|_{H^1(\Omega)}^2}$. \newline
Bottom right:  \#DoFs in $P^\delta\times U^\delta$ vs. different parts of the error estimator $\mathcal{E}(\vec{\lambda}_{\rm{\ref{item:b}}}^\delta, \lambda_{\rm{\ref{item:c}}}^\delta, u^\delta, p^\delta)$, all multiplied with $1/\sqrt{\|\nabla u -p^\delta\|_{H(\divv;\Omega)}^2+\|u-u^\delta\|_{H^1(\Omega)}^2}$, for adaptive refinement with $q=0$.}
\label{figure:fem}
\end{figure}
An example of the triangulation $\tria^\delta$, and corresponding triangulations $\tria_D^\delta$ and $\tria_N^\delta$ is given in Figure~\ref{fig:meshes}.
\begin{figure}[h]
 \includegraphics[angle=90,width = 3cm]{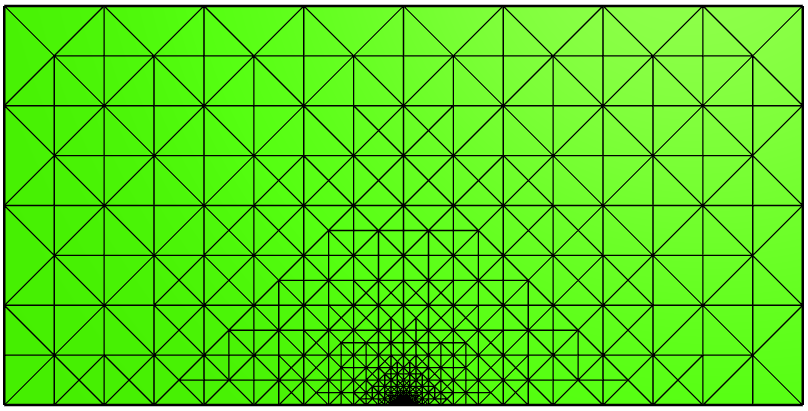}\quad
  \includegraphics[angle=90,width = 3cm]{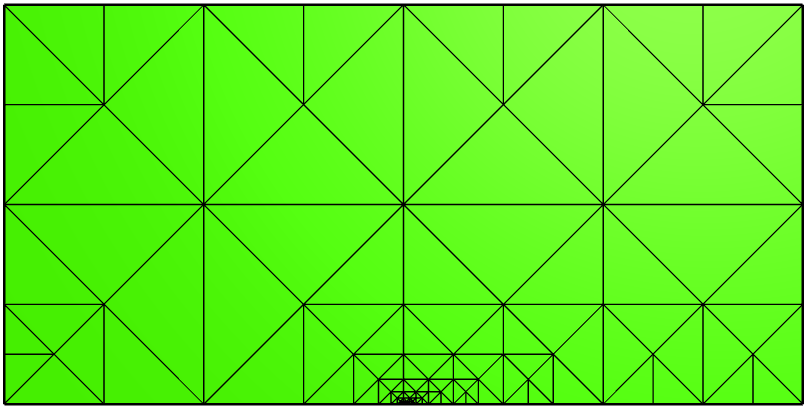}\quad
   \includegraphics[angle=90,width = 3cm]{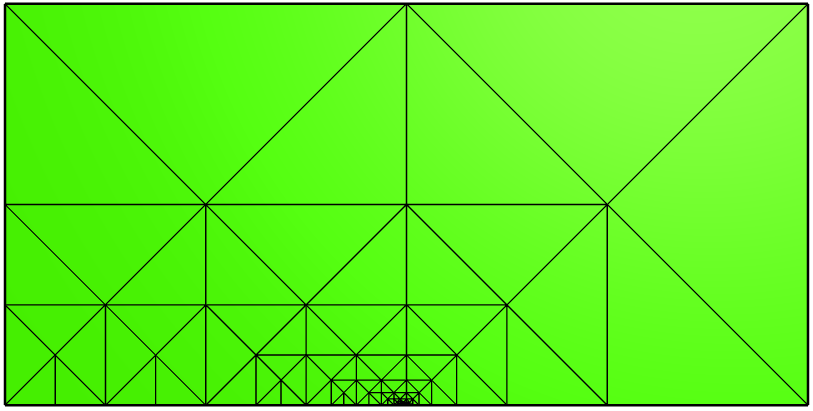}
       \caption{Results with Sect.~\ref{sec:numfem} (FEM). Meshes $\tria^\delta$, $\tria_D^\delta$ and $\tria_N^\delta$, where $\Omega$ has been rotated over $90^\circ$ counterclockwise.}
    \label{fig:meshes}
\end{figure}

\subsection{Experiments using Machine Learning}
Considering the elliptic model problem

$$ \left\{
\begin{array}{r@{}c@{}ll}
-\triangle u&\,\,=\,\,& g &\text{ on } \Omega,\\
u &\,\,=\,\,& h &\text{ on } \partial\Omega,
\end{array}
\right.
$$
we will compare our newly introduced methods to three prevalent archetypes of Machine Learning approaches: Deep Ritz Method (DRM) \cite{70.25}, Physics Informed Neural Network (PINN) \cite{243.65}, and Weak Adversarial Network (WAN) \cite{19.28}. The first of these methods (DRM) minimises 
the energy functional, the second method (PINN) minimises the 
squared $L_2(\Omega)$-norm of the residual, while the latter (approximately) minimises the squared $H^{-1}(\Omega)$-norm of the residual. All three however deal with the essential boundary condition through the addition of a multiple of the squared $L_2(\partial\Omega)$-norm of the boundary residual to the functional to be minimised .

In general, all these methods minimise a loss functional $L(\bigcdot)$ over some (deep) neural network $\cX \subset X$, finding an approximate solution $u_\cX := \argmin_{w \in \cX} L(w)$. In the aforementioned examples, $X=H^1(\Omega)$ and the loss functions are as follows

\begin{subequations}
\begin{align*}
    \text{(DRM)} \quad L(w) &= \tfrac{1}{2}\|\nabla w\|_{L_2(\Omega)}^2 - g(w) + \alpha \|w-h\|_{L_2(\partial \Omega)}^2 \;,\\
    \text{(PINN)} \quad L(w) &= \|g + \Delta w \|_{L_2(\Omega)}^2 + \alpha \|w-h\|_{L_2(\partial \Omega)}^2 \;, \\
    \text{(WAN)} \quad  L(w) &= \sup_{v \in \cY} \frac{\left(\int_\Omega  \nabla w \cdot \nabla(\phi v) \, \mathrm{d}x -g(\phi v)\right)^2}{\|\nabla(\phi v)\|_{L_2(\Omega)^d}^2} +\alpha \|w-h\|_{L_2(\partial\Omega)}^2 \;,
\end{align*}
\end{subequations}
where $\phi \in H^1_0(\Omega)$ with $\phi(x) \eqsim \text{dist}(x, \partial \Omega)$, $\alpha > 0$ some chosen constant, and $\cY \subset Y \coloneqq H^1(\Omega)$ a neural network like $\cX$. Note that in the WAN case, due to the need of evaluating a dual-norm, there is an additional supremum of $v \in \cY$. For reasons explained in Sect.~\ref{sec:6.1}, it is best to rewrite the WAN loss function using Lemma \ref{lem:1} in the form

\begin{subequations}
    \begin{align*}
        \text{(WAN)} \quad L(w) = \sup_{v \in \cY}\left\{\int_\Omega \nabla w \cdot\nabla (\phi v)\,\mathrm{d}x - g(\phi v)- \tfrac12 \|\nabla(\phi v)\|_{L_2(\Omega)^d}^2\right\} \\+\alpha \|w-h\|_{L_2(\partial\Omega)}^2.
    \end{align*}
\end{subequations}

We compare the above methods with the following four newly introduced least square loss functions whose minimum over the neural network $\cX$ will produce a quasi-best solution from $\cX$, given that the neural network $\cY$ at the test side is big enough:


\begin{subequations}
$\textrm{\normalfont{(QOLS1)}} \quad \vec{w} \in \cX \subset X \coloneqq H^1(\Omega) \times H(\divv;\Omega), \quad  \cY \subset Y \coloneqq H(\divv;\Omega),$
\begin{align*}
L(\vec{w}) \coloneqq L(w, \vec{q}) &= \tfrac12
\|\vec{q} -\nabla w\|_{L_2(\Omega)^d}^2+\tfrac12 \|\divv \vec{q}+g\|_{L_2(\Omega)}^2
\\
&\hspace*{1em}+\sup_{\vec{v} \in \cY} \left\{\int_{\partial\Omega}(w-h) \gamma_{\partial\Omega}^{\vec{n}}(\vec{v})\,\mathrm{d}s -\tfrac12\|\vec{v}\|_{H(\divv; \Omega)}^2\right\},
\end{align*}
$\textrm{\normalfont{(QOLS1$_\Delta$)}} \quad \vec{w} \in \cX \subset X \coloneqq  H^1(\Omega) \times H(\divv;\Omega), \quad  \cY \subset Y \coloneqq H_\triangle(\Omega),$
\begin{align*}
L(\vec{w}) \coloneqq L(w, \vec{q}) &= \tfrac12
\|\vec{q} -\nabla w\|_{L_2(\Omega)^d}^2+\tfrac12 \|\divv \vec{q}+g\|_{L_2(\Omega)}^2
\\
&\hspace*{1em}+\sup_{v \in \cY} \left\{\int_{\partial\Omega}(w-h) \gamma_{\partial\Omega}^{\vec{n}}(\nabla v)\,\mathrm{d}s -\tfrac12\|v\|_{H_\triangle(\Omega)}^2\right\},
\end{align*}
$\textrm{\normalfont{(QOLS2)}} \quad w \in \cX \subset X \coloneqq H^1(\Omega), \quad \cY \subset Y \coloneqq H^1(\Omega) \times H(\divv;\Omega),$
\begin{align*}
L(w) &= \sup_{\vec{v}=(v_1,\vec{v_2}) \in \cY}
\int_\Omega  \nabla w \cdot\nabla \phi v_1\,\mathrm{d}x-g(\phi  v_1)\\
&\hspace*{1em} +\int_{\partial\Omega} (w-h) \gamma_{\partial\Omega}^{\vec{n}} ( \vec{v_2})\,\mathrm{d}s -\tfrac12 \|\nabla(\phi  v_1)\|_{L_2(\Omega)^d}^2 -\tfrac12\|\vec{v_2}\|_{H(\divv;\Omega)}^2,
\end{align*}
$\textrm{\normalfont{(QOLS2$_\Delta$)}} \quad w \in \cX \subset X \coloneqq H^1(\Omega), \quad \cY \subset Y \coloneqq H^1(\Omega) \times H_{\triangle}(\Omega),$
\begin{align*}
L(w) &= \sup_{\vec{v}=(v_1,v_2) \in \cY}
\int_\Omega  \nabla w \cdot\nabla \phi v_1\,\mathrm{d}x-g(\phi  v_1)\\
&\hspace*{1em} +\int_{\partial\Omega} (w-h) \gamma_{\partial\Omega}^{\vec{n}} (\nabla v_2)\,\mathrm{d}s -\tfrac12 \|\nabla(\phi  v_1)\|_{L_2(\Omega)^d}^2 -\tfrac12\|v_2\|_{H_{\triangle}(\Omega)}^2.
    \end{align*}
\end{subequations}

$\textrm{\normalfont{(QOLS1$_\Delta$)}}$ and $\textrm{\normalfont{(QOLS2$_\Delta$)}}$ are the first order system and second order least squares formulations from \eqref{eq:103} and \eqref{eq:104}, specialized to the case that $A=I$, $B=0$ and $\gamma_D=\partial\Omega$, and $\textrm{\normalfont{(QOLS1)}}$ and $\textrm{\normalfont{(QOLS2)}}$ are the corresponding formulations where the Dirichlet boundary condition is enforced by means of \eqref{eq:100} instead of \eqref{eq:102}.

As with the WAN method, these formulations involve solving a minimax problem. To solve this in practise, one therefore needs to switch between minimising the loss function over the test space $\cX$ for $K_\text{w}$ steps, and maximising over the trial space $\cY$ for $K_\text{v}$ steps. One might consider a more intelligent switching between minimizing and maximizing, but this is beyond the scope of this paper. \medskip

As the given integrals so far will most often not have a closed form, these must be approximated. This can either be done with Monte Carlo integration or quadrature integration, each having its pros and cons. \medskip

Monte Carlo integration does not suffer from the curse of dimensionality while also inducing a stochastic property to our integral, which counteracts overfitting of the model on quadrature points and escaping local minima in gradient descent. Quadrature integration on the other hand converges faster and gives more control over the accuracy of the approximation, while overfitting can be counteracted through adaptive quadrature strategies \cite{RIVERA2022114710}. \\

For our Monte Carlo integration, we will use simple uniform sampling of the domain, which results in the following approximations

\begin{subequations}
    \begin{align*}
        \int_{\Omega} g(x)\,\mathrm{d}x \approx \frac{|\Omega|}{N_{\text{r}}} \sum_{i=1}^{N_{\text{r}}} g(x_i^{\text{r}}), \quad \quad \int_{\partial \Omega} h(x)\,\mathrm{d}s \approx \frac{|\partial \Omega|}{N_{\text{b}}} \sum_{i=1}^{N_{\text{b}}} h(x_i^{\text{b}}),
\end{align*}
\end{subequations}
where the $\{x_i^{\text{r}}\}_{i=1}^{N_{\text{r}}} \subset \Omega$ and $\{x_i^{\text{b}}\}_{i=1}^{N_{\text{b}}} \subset \partial \Omega$ are sampled uniformly from their respective domains. It is possible to adopt different sampling strategies to decrease the variance (such as importance sampling \cite{https://doi.org/10.1111/mice.12685} or quasi-Monte Carlo strategies \cite{chen2019quasimonte}), but this is not the focus of this paper and we will therefore stick to the straightforward uniform sampling. \medskip

For the numerical experiments on a  domain $\Omega$ that is the union of $d$-dimensional hypercubes, an adaptive tensor product Gauss-Legendre quadrature scheme was implemented. This was done by initially subdividing the domain $\Omega$ into $N$ hypercubes $\Omega_i$. The scheme was then made adaptive by dividing each $\Omega_i$ into $2^d$ hypercubes $\Omega_{i_k}$, and computing the following criterium
\begin{equation*}
   \left| \mathrm{GL}(f;\Omega_i) - \sum_{k=1}^{2^d} \mathrm{GL}(f;\Omega_{i_k}) \right| < \tau_1 \frac{\text{diam}(\Omega_i)}{|\Omega|} \max\left\{\left|\sum_{i}\mathrm{GL}(f;\Omega_i) \right|, \tau_2 \right\},
\end{equation*}
where with $\mathrm{GL}(f, \omega)$ we denote the Gauss-Legendre tensor product quadrature of a function $f$ over a hypercube $\omega$. If the criterium holds true, the subdomain $\Omega_i$ is accepted and does not to be refined further, otherwise it is rejected and gets replaced by its refined subdomains. Ideally the refinements stops when all subdomains are accepted, in which case one expects either an absolute error of the order $\tau_1 \tau_2$ or a relative error of the order $\tau_2$, where $\tau_1$ and $\tau_2$ are some chosen constants. Unfortunately this can be expensive (i.e.~in the case of a singular function $f$), so for practical reasons the refinements will automatically be stopped after 1000 rejections and consequent refinements of the domain. 
\medskip

This then leaves us with defining the deep neural networks $\cX \subset X$ and $\cY \subset Y$ in which our trial and test solution will lie. For the architecture of these networks, we have opted for the so-called Residual Neural Networks (ResNet). These were introduced in \cite{He2015DeepRL}, and were designed to avoid the vanishing gradient problem, making them easier to train. A graphical representation of the ResNet we used, is given in Figure~\ref{fig:ResNet}. This network is built out of two main components: the ResNet blocks and linear transformations, one to get the input vector to same width as the ResNet block and one to get the output of the last ResNet block to the correct output dimension. \\

\begin{figure}
  \includegraphics[width=\textwidth]{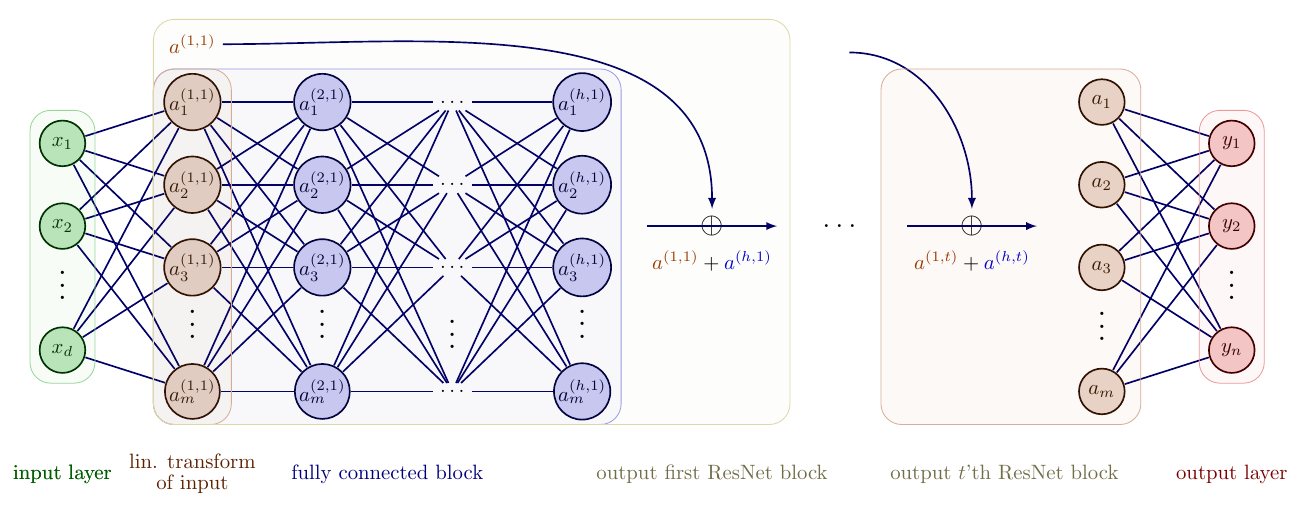}
  \caption{Graphical representation Residual Neural Network (ResNet).}
  \label{fig:ResNet}
\end{figure}

Using the same notation as in Figure~\ref{fig:ResNet}, the network first calculates 
\begin{equation*}
    a^{(1,1)} = \mathcal{F}_{0}(x) \coloneqq  W_0 x + b_0,
\end{equation*}
in which $W_0 \in \mathbb{R}^{m \times d}$ and $b_0 \in \mathbb{R}^{m}$. Note that the notation $a^{(j,k)}$ signifies the $j$'th layer of the $k$'th ResNet block. With $a^{(1,1)}$, we have entered the first ResNet block. This is a fully connected neural network (FCNN) of width $m$ and depth $h$. This means that to get from layer $a^{(j,k)}$ to $a^{(j+1,k)}$ for $1 \leq k \leq t$, one calculates

\begin{equation*}
    a^{(j+1,k)} = \mathcal{F}_{j,k}(a^{(j,k)}) \coloneqq \sigma\left(W_{j,k}a^{(j,k)} + b_{j,k} \right), \qquad \text{for } 1 \leq j \leq h -1,
\end{equation*}
where $W_{j,k} \in \mathbb{R}^{m\times m}$, $b_{j,k} \in \mathbb{R}^m$ and $\sigma$ is some non-linear activation function which acts coordinate wise. 
To ensure that this architecture does not suffer from a vanishing gradient, 
the final output of the $k$'th ResNet block is given by the addition of the first layer of its FCNN with the last

\begin{equation*}
    a^{(1,k+1)} = \mathcal{F}_{h,k}(a^{(1,k)},a^{(h,k)}) \coloneqq a^{(1,k)} + a^{(h,k)}, \qquad \text{for } 1 \leq k \leq t.
\end{equation*}
We can therefore define 

\begin{equation*}
    \mathcal{F}_{k}(a) \coloneqq \mathcal{F}_{h,k} \circ \mathcal{F}_{h-1,k}\circ \cdots \circ \mathcal{F}_{1,k}(a) \qquad \text{for } 1 \leq k \leq t,
\end{equation*}
which maps from the initial layer of the $k$'th ResNet block, to its output. The last step is a linear transformation from the final ResNet block output $a \coloneqq a^{(1,t+1)}$ to the output vector $y$, which is given by

\begin{equation*}
    y = \mathcal{F}_{t+1}(a) \coloneqq W_{t+1}a + b_{t+1},
\end{equation*}
where $W_{t+1} \in \mathbb{R}^{n \times m}$ and $b_{t+1} \in \mathbb{R}^{n}$. We therefore find that 
\begin{equation}\label{eq: forward Resnet}
    y = \mathcal{F}(x) \coloneqq \mathcal{F}_{t+1} \circ \mathcal{F}_{t} \circ \cdots \circ \mathcal{F}_{0}(x).
\end{equation}
In the context of neural networks, this function $\mathcal{F}$ is often called the `forward' function of the network. 

The set of all possible different parameters of our ResNet will be denoted with
\begin{equation*}
\begin{split}
    \Theta = \big\{&\big(W_0, W_{t+1}, b_0, b_{t+1}, W_{j,k}, b_{j,k} \big) \colon W_0 \in \mathbb{R}^{m \times d}, b_0 \in \mathbb{R}^m, W_{t+1}\in \mathbb{R}^{n \times m},\\ &b_{t+1}\in \mathbb{R}^n, W_{j,k} \in \mathbb{R}^{m \times m}, b_{j,k} \in \mathbb{R}^m, \text{ with } 1 \leq j \leq h-1,\, 1 \leq k \leq t \big\}.
\end{split}
\end{equation*}
This allows us to define our neural network space of functions as 
\begin{equation*}
    \text{ResNet}(d,n,h,m,t) \coloneqq \left\{u_{\theta} \colon \mathbb{R}^d \to \mathbb{R}^n \colon \theta \in \Theta \right\},
\end{equation*}
where $u_\theta$ denotes the forward function $\mathcal{F}$ as defined in \eqref{eq: forward Resnet} for a specific choice of parameters $\theta \in \Theta$. Note that the number of degrees of freedom for our ResNet is given by $\dim(\Theta) = m(1+d+n+t(h-1)(m+1)) + n$. \medskip

Having now defined all necessary components, we can formulate the general algorithm for solving our model problem

\begin{algorithm}[hbt!]
\caption{Quasi-Optimal Least Squares (QOLS) method}\label{alg:QOLS}
\begin{algorithmic}
\State{\textbf{Input:} Loss function $L(w_\theta,v_\eta)$: One of the four given QOLS formulations.\\
The chosen integration scheme: Monte Carlo integration or adaptive quadrature rule integration.\\
$K_{\text{w}}/K_{\text{v}}$: The number of updates of the trial/test solutions per epoch. \\
$N_{\text{ep}}$: The total number of epochs}
\State{\textbf{Initialize:} The trial solution $w_\theta \in \cX$ and test solution $v_\eta \in \cY$};
\For{epoch $= 1,\ldots, N_{\text{ep}}$}
 \For{step $= 1,\ldots, K_\text{w}$}
   \State Compute the loss $L(w_\theta,v_\eta)$;
   \State Update trial function parameters $\theta$ by minimising loss through AdamW;
  \EndFor
  \For{step $= 1,\ldots, K_\text{v}$}
   \State Compute the loss $L(w_\theta,v_\eta)$;
   \State Update test function parameters $\eta$ by maximising loss through AdamW;
  \EndFor
\EndFor
\State{\textbf{Output:} Solution $w_\theta$};
\end{algorithmic}
\end{algorithm}

This algorithm was implemented for each of the different QOLS formulations along with the DRM, WAN and PINN in Python using PyTorch.\medskip

\begin{example} \label{example:1}
To compare these methods we will look at the following problem

$$ \left\{
\begin{array}{@{}r@{}c@{}ll}
-\triangle u(x) &\,\,=\,\,& 0, &x\in \Omega,\\
u(x) &\,\,=\,\,& u(r,\theta) = r^{\frac{2}{3}}\sin \frac{2}{3}\theta, &x\in\partial\Omega,
\end{array}
\right.
$$
with the L-shaped domain $\Omega = (-1,1)^2 \setminus (0,1) \times (0,-1)$.
Note that the exact solution $u^\ast(x) = u^\ast(r,\theta)= r^{\frac{2}{3}}\sin \frac{2}{3}\theta$
 is only in $H^s(\Omega)$ for $s<\frac53$, with its gradient blowing up at the origin. Because of the non-smoothness of $u$ at $\partial\Omega$ we expect that our correct imposition of the boundary condition will show-off.

The networks for $\cX$ and $\cY$ in each of the different methods are chosen to be $\text{ResNet}(d=2,n,h=2,m=30,t=4)$, where $n$ is either $1,2$ or $3$, depending on the required amount of outputs, which yields $3841$, $3872$ and $3903$ degrees of freedom respectively. The activation function between all layers is chosen to be the Exponential Linear Unit (ELU), as it is a smooth function guaranteeing that $\cX \subset X$ and $\cY \subset Y$. \medskip

For the methods (WAN, QOLS2, QOLS2$_\Delta$) that require a $\phi(x) \eqsim \text{dist}(x, \partial \Omega)$, one could define it to be the exact distance to the boundary but this will result in a $\phi$ with $\nabla \phi$ being discontinuous, which can lead to numerical integration issues. Instead we will define $\phi$ as 
$$
\phi(x)=\left\{\begin{array}{cc} \Big(\frac{1}{\sum_i (1/a_i(x))^p}\Big)^{1/p} & \text{ when } \min_i a_i(x) >0\\
0 & \text{ otherwise}
\end{array} \right.
$$
for $p=2$, where the $a_i(x)$ are the exact distance functions to each of the sides of our domain $\Omega$. With this definition, one finds that $\phi \in C^\infty(\Omega)$ and $\phi(x) \eqsim \text{dist}(x, \partial \Omega)$ \cite{255.5}. \medskip

For backward propagation of the models ($u_\theta$ and $v_\eta$), we will use the popular AdamW algorithm for gradient descent, with the learning rate set to the recommended $\text{lr}=0.001$. For the QOLS methods, we let the learning rate drop off slowly by multiplying it with $0.99$ every $100$ epochs (resulting in a learning rate of approximately $\text{lr}=0.00022$ at the end). We do this as we found experimentally that these methods already reach a local minimum very rapidly for $\text{lr}=0.001$, and decreasing the learning rate somewhat helps stabilize them. \medskip

For the numerical integration, in the case of Monte Carlo integration we set $N_r = 4000$ and $N_b = 1000$, while for the adaptive quadrature we used Gauss-Legendre integration with polynomial degree $3$, an initial subdivision of $\Omega$ into $N = 192$ subdomains, a maximum of $1000$ further refinements and tolerances $\tau_1 = \tau_2 = 0.001$. \medskip

Lastly we ran all methods for $N_{\text{ep}} = 15000$ epochs, chose $K_{\text{w}} = 1,$ $K_{\text{v}} = 10$ and, for the existing machine learning approaches, set the parameter $\alpha = 500$. This choice for $\alpha$ seems to be a pretty optimal choice from experiments. The results using adaptive quadrature integration are presented in Figure \ref{fig:ML results quad adap} and are very similar to the results using Monte Carlo integration, the main difference being that the former managed an extra decrease in $H^1$-error of about 2-4 time for the smallest $H^1$-errors.

\begin{figure}[h!]
  \includegraphics[width=0.8\textwidth]{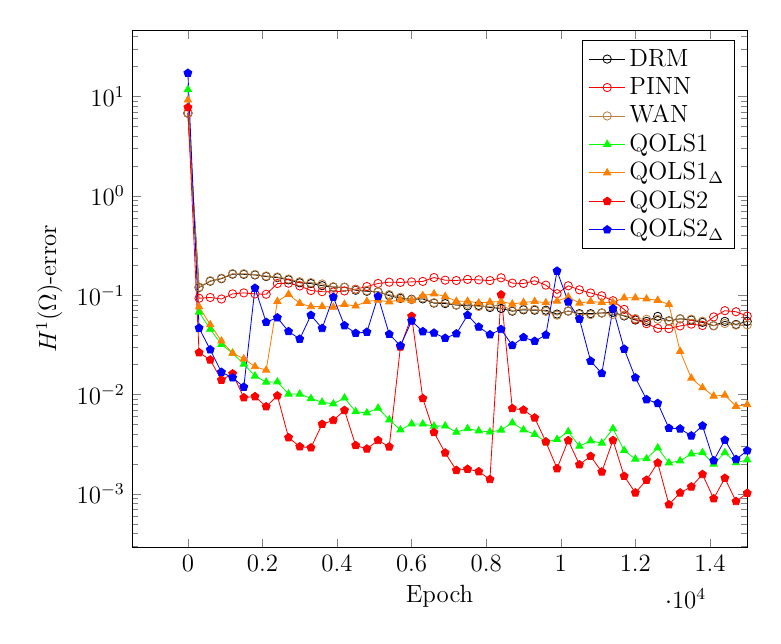}
  \caption{Illustration with Ex.~\ref{example:1} (Machine Learning). Plot of the $H^1(\Omega)$-error $\|u^\ast - u_\theta\|^2_{H^1(\Omega)}$ against epoch for the different methods, using adaptive quadrature integration.}
  \label{fig:ML results quad adap}
\end{figure}

From these results we find that for this problem our newly introduced QOLS formulations outperform the DRM, PINN and WAN methods. It could be that given enough epochs, these methods catch up, but at least on this time scale for similar learning rates the QOLS formulations perform (up to 100x) better. 
For smooth problems (e.g. $u^\ast(x) = \|x\|_2^2$) the performance gap as in Figure \ref{fig:ML results quad adap} is much less prevalent if present at all. \medskip


Comparing the QOLS methods among themselves, we find the first order system formulations slightly outperforming the second order formulations. This does come at the price of 
needing vector-valued functions within the trial space $\cX$ possibly necessitating more network parameters for higher dimensional problems to model these extra outputs. Furthermore we find that the $\Delta$ formulations slightly underperform compared to their counterparts, but one can expect for the aforementioned reasons that for higher dimensional problems this will turn around as the $\Delta$ formulations have scalar-valued functions in their test space $\cY$.\medskip

On a practical note, the QOLS formulations have the advantage of not having to bother with making a good choice for $\alpha$, a choice which greatly affects the performance of the DRM, PINN and WAN methods that can only be attained empirically. A downside however is that the QOLS formulations, similar to WAN, have to deal with an adversarial network and solve a min-max problem as a result of minimizing a dual norm. This is a lot more expensive and as far as we know, there is no intelligent way of choosing when to switch between the inner and outer loop.
\end{example}

\section{Conclusion} \label{sec:conclusion}
We have introduced least-squares formulations of second order elliptic equations and the stationary Stokes equations with possibly inhomogeneous boundary conditions, whose minimization over a finite element space or a Neural Net yield approximations that are quasi-best. This is due to the fact that the least-squares residual is equivalent to the (squared) error in a canonical norm. 
Despite of that, the use of fractional Sobolev norms on the boundary, or even any function space on the boundary has been avoided. 
Such spaces were replaced by the ranges of trace operators of standard function spaces on the domain. Some parts of the residual are measured in dual norms. For sufficiently large test finite element spaces or adversarial Neural Nets, they are replaced by discretized dual norms whilst preserving quasi-optimal approximations. Numerical results both for finite element spaces and for Neural Nets are presented. The advantage of our approach compared to usual Machine Learning algorithms is apparent for solutions that have singularities. It is however fair to say that known problems with Machine Learning algorithms for solving PDEs, as quadrature issues or the painful minimization of a non-convex functional, are not solved by the use of our well-posed least-squares functionals.

\appendix 
\section{Proof of Proposition~\ref{prop:stokes}} \label{sec:app}
By making the substitutions
\be \label{eq:11}
\undertilde{\vec{u}}:=\vec{u} \sqrt{\nu},\,
\undertilde{p}:=p/\sqrt{\nu},\,
\undertilde{\vec{f}}:=\vec{f}/\sqrt{\nu},
\undertilde{g}:=g\sqrt{\nu},\,
\undertilde{\vec{h}}:=\vec{h}\sqrt{\nu},\,
\ee
the modified variables satisfy the Stokes system \eqref{eq:Stokesequations} with $\nu=1$.

By testing the first and second equation of this system with $\vec{v} \in H_0^1(\Omega)^d$ and $q \in L_{2,0}(\Omega)$, respectively, and applying integration by parts we obtain
$$
\int_\Omega \nabla \undertilde{\vec{u}}: \nabla \vec{v}-\undertilde{p}\divv \vec{v}+q \divv \undertilde{\vec{u}}\,dx=\int_{\Omega}\undertilde{\vec{f}}\cdot\vec{v}+ \undertilde{g} q\,dx.
$$
From $\triangle=\nabla \divv+\curl \curl $, and so $\int_\Omega \nabla \cdot : \nabla \cdot \,dx= \int_\Omega \divv \cdot  \divv \cdot +\curl \cdot  \curl \cdot \,dx
$ on $H^1(\Omega)^d \times H_0^1(\Omega)^d$, regardless of the boundary datum $\undertilde{\vec{h}}$ the Stokes solution $(\undertilde{\vec{u}},\undertilde{p})$ satisfies
\be \label{eq:8}
\int_\Omega \divv \undertilde{\vec{u}}\divv \vec{v}+\curl \undertilde{\vec{u}}\cdot \curl \vec{v}-\undertilde{p}\divv \vec{v}+q \divv \undertilde{\vec{u}}\,dx=\int_{\Omega}\undertilde{\vec{f}}\cdot\vec{v}+\undertilde{g} q\,dx
\ee
($\vec{v} \in H_0^1(\Omega)^d,\,q \in L_{2,0}(\Omega)$). 
The operator defined by the left hand side is in $\cL(H^1(\Omega)^d\times L_{2,0}(\Omega),H^{-1}(\Omega)^d\times L_{2,0}(\Omega))$. Furthermore, it is well-known that this operator is in $\Lis(H_0^1(\Omega)^d\times L_{2,0}(\Omega),H^{-1}(\Omega)^d\times L_{2,0}(\Omega))$.

To arrive at a first order system, we introduce the vorticity $\undertilde{\vec{\omega}}:=\curl \undertilde{\vec{u}}$ as an additional variable. Then \eqref{eq:8} is equivalent to the system
\be \label{eq:9}
 \left\{
\begin{array}{r@{}c@{}ll}
\int_\Omega \divv \undertilde{\vec{u}}\divv \vec{v}+\undertilde{\vec{\omega}} \cdot \curl \vec{v}-\undertilde{p}\divv \vec{v}+q \divv \undertilde{\vec{u}}\,dx &\,\,=\,\,&\int_{\Omega}\undertilde{\vec{f}}\cdot\vec{v}+\undertilde{g} q\,dx\\
\undertilde{\vec{\omega}}-\curl \undertilde{\vec{u}}&\,\,=\,\,&0
\end{array}
\right.
\ee
($\vec{v} \in H_0^1(\Omega)^d,\,q \in L_{2,0}(\Omega)$). 
With $U:=H^1(\Omega)^d\times L_{2,0}(\Omega) \times L_2(\Omega)^{2d-3}$, $U_0:=H_0^1(\Omega)^d\times L_{2,0}(\Omega) \times L_2(\Omega)^{2d-3}$, and $V:=H^{-1}(\Omega)^d \times L_{2,0}(\Omega) \times L_2(\Omega)^{2d-3}$, the operator defined by the left hand side, which we will denote by $H$, satisfies $H \in \cL(U,V)$. 
To see that $H \in \Lis(U_0,V)$, replace the zero at the right hand side of the second equation by $\vec{k} \in L_2(\Omega)^{2d-3}$. By substituting $\undertilde{\vec{\omega}}=\vec{k}+\curl \undertilde{\vec{u}}$ in the first equation, by the well-posedness of \eqref{eq:8} one finds a solution $(\undertilde{\vec{u}},\undertilde{p},\undertilde{\vec{\omega}}) \in 
U_0$ whose norm is bounded by the norm of the data in $V$.

A solution $(\undertilde{\vec{u}},\undertilde{p},\undertilde{\vec{\omega}})$ of \eqref{eq:9} satisfies $\divv \undertilde{\vec{u}}-\undertilde{g} \in \Span \1$. Since $\vec{v} \in H_0^1(\Omega)^d$ satisfies $\divv \vec{v} \perp_{L_2(\Omega)} \1$, we conclude that
\be \label{eq:10}
 \left\{
\begin{array}{r@{}c@{}ll}
\int_\Omega \undertilde{\vec{\omega}} \cdot \curl \vec{v}-\undertilde{p}\divv \vec{v}+q \divv \undertilde{\vec{u}}\,dx &\,\,=\,\,&\int_{\Omega}\undertilde{\vec{f}}\cdot\vec{v} +\undertilde{g} (q-\divv \vec{v})\,dx\\
\undertilde{\vec{\omega}}-\curl \undertilde{\vec{u}}&\,\,=\,\,&0
\end{array}
\right.
\ee
($\vec{v} \in H_0^1(\Omega)^d,\,q \in L_{2,0}(\Omega)$). 
The operator defined by the left hand side, which we will denote by $K$, satisfies $K \in \cL(U,V)$. 
The squared norm $\|H(\undertilde{\vec{u}},\undertilde{p},\undertilde{\vec{\omega}})\|_V^2$ reads as
$$
\|\vec{v}\mapsto \int_\Omega \divv \undertilde{\vec{u}}\divv \vec{v}+\undertilde{\vec{\omega}} \cdot \curl \vec{v}-\undertilde{p}\divv \vec{v}\|^2_{H^{-1}(\Omega)^d}+\|\divv \undertilde{\vec{u}}\|^2_{L_2(\Omega)^d}+\|\undertilde{\vec{\omega}}-\curl \undertilde{\vec{u}}\|^2_{L_2(\Omega)^{2d-3}},
$$
and $\|K(\undertilde{\vec{u}},\undertilde{p},\undertilde{\vec{\omega}})\|_V^2$ reads as
$$
\|\vec{v}\mapsto \int_\Omega \undertilde{\vec{\omega}} \cdot \curl \vec{v}-\undertilde{p}\divv \vec{v}\|^2_{H^{-1}(\Omega)^d}+\|\divv \undertilde{\vec{u}}\|^2_{L_{2,0}(\Omega)^d}+\|\undertilde{\vec{\omega}}-\curl \undertilde{\vec{u}}\|^2_{L_2(\Omega)^{2d-3}}.
$$
From $\|\vec{v}\mapsto \int_\Omega \divv \undertilde{\vec{u}}\divv \vec{v}\|^2_{H^{-1}(\Omega)^d} \lesssim \|\divv \undertilde{\vec{u}}\|^2_{L_{2,0}(\Omega)^d}$, applications of the triangle inequality show $\|H(\undertilde{\vec{u}},\undertilde{p},\undertilde{\vec{\omega}})\|_V \eqsim \|K(\undertilde{\vec{u}},\undertilde{p},\undertilde{\vec{\omega}})\|_V$.
From 
$H \in \Lis(U_0,V)$, it follows that for $(\undertilde{\vec{u}},\undertilde{p},\undertilde{\vec{\omega}}) \in U_0$, $\|(\undertilde{\vec{u}},\undertilde{p},\undertilde{\vec{\omega}})\|_U \eqsim \|H(\undertilde{\vec{u}},\undertilde{p},\undertilde{\vec{\omega}})\|_V \eqsim \|K(\undertilde{\vec{u}},\undertilde{p},\undertilde{\vec{\omega}})\|_V$, i.e., $K \in \cL(U_0,V)$ is a homeomorphism with its range.
For arbitrary $(\vec{a},b,\vec{c})\in V$, there exists a $(\undertilde{\vec{u}},\undertilde{p},\undertilde{\vec{\omega}}) \in U_0$, with
$H(\undertilde{\vec{u}},\undertilde{p},\undertilde{\vec{\omega}})=(\vec{a},b,\vec{c})+(\vec{v}\mapsto\int_\Omega b \divv \vec{v}\,dx,0,0)\in V$, and so
$K(\undertilde{\vec{u}},\undertilde{p},\undertilde{\vec{\omega}})=(\vec{a},b,\vec{c})$, meaning that $K \in \Lis(U_0,V)$.

Finally, as we have seen, a consistent formulation of Stokes problem with $\nu=1$ for $(\undertilde{\vec{u}},\undertilde{p})$, with $\undertilde{\vec{\omega}}=\curl \undertilde{u}$, is given by 
$$
\undertilde{G}(\undertilde{\vec{u}},\undertilde{p},\undertilde{\vec{\omega}}):=\big(K(\undertilde{\vec{u}},\undertilde{p},\undertilde{\vec{\omega}}),\gamma(\undertilde{\vec{u}})\big)=
\big(\vec{v}\mapsto\int_\Omega \undertilde{\vec{f}} \cdot \vec{v} -\undertilde{g}\divv \vec{v}\,dx, \undertilde{g},0, \undertilde{\vec{h}}\big).
$$
It holds that $K \in \cL(U,V)$, $(\undertilde{\vec{u}},\undertilde{p},\undertilde{\vec{\omega}}) \mapsto \gamma(\vec{u}) \in \cL(U,H^{\frac12}(\partial\Omega)^d)$ is surjective, with kernel equal to $U_0$, and $K \in \Lis(U_0,V)$. By an application of Lemma~\ref{lem:bi} we conclude that  $\undertilde{G} \in \Lis(U,V \times H^{\frac12}(\partial\Omega)^d)$.

Upon substituting \eqref{eq:11} and $\omega:=\frac{\undertilde{\omega}}{\sqrt{\nu}}$, the proof of Proposition~\ref{prop:stokes} is completed.

\bibliographystyle{alpha}
\bibliography{main.bib}

\end{document}